\documentclass[a4paper,10pt]{amsart}
\usepackage{amsthm,amsmath,amsfonts,amssymb}
\usepackage{aliascnt}
\usepackage{bm}
\usepackage{mathtools,mathrsfs}
\usepackage{esint}
\usepackage{todonotes}
\usepackage[pdfdisplaydoctitle,colorlinks,breaklinks,urlcolor=blue,linkcolor=blue,citecolor=blue]{hyperref}
\usepackage[nameinlink,capitalise]{cleveref}

\newcommand{\A}{\mathcal{A}}
\newcommand{\C}{\mathbb{C}}

\newcommand{\cD}{\mathcal{D}}
\newcommand{\E}{\mathcal{E}}
\newcommand{\e}{\mathrm e}
\newcommand{\F}{\mathcal{F}}
\newcommand{\I}{\mathcal{I}}
\renewcommand{\H}{\mathbb{H}}
\newcommand{\HH}{\mathcal{H}}
\newcommand{\Nsc}{\mathcal{N}}
\newcommand{\PR}{\mathcal{P}}
\newcommand{\cQ}{\mathcal{Q}}
\newcommand{\PP}{\mathbb{P}}
\newcommand{\R}{\mathbb{R}}
\newcommand{\T}{\mathbb{T}}
\newcommand{\Z}{\mathbb{Z}}
\newcommand{\Ssc}{\mathcal{S}}

\newcommand{\Psol}{\PP}

\newcommand{\Qgrad}{\mathbb Q}
\newcommand{\Pscale}{\mathscr P}
\newcommand{\Qscale}{\mathscr Q}

\newcommand{\sJ}{\mathsf J}
\newcommand{\sH}{\mathsf H}
\newcommand{\sX}{\mathsf X}
\newcommand{\sY}{\mathsf Y}

\newcommand{\sK}{\mathsf K}

\newcommand{\sA}{\mathsf A}
\newcommand{\sB}{\mathsf B}

\newcommand{\sF}{\mathsf F}
\newcommand{\sU}{\mathsf U}
\newcommand{\sV}{\mathsf V}

\newcommand{\sZ}{\mathsf Z}

\newcommand{\sM}{\mathsf M}
\newcommand{\sN}{\mathsf N}

\newcommand{\sG}{\mathsf G}
\newcommand{\sw}{\mathsf w}
\newcommand{\sg}{\mathsf g}
\newcommand{\su}{\mathsf u}

\DeclareMathOperator{\trace}{Tr}
\DeclareMathOperator{\re}{Re}
\DeclareMathOperator{\im}{Im}
\DeclareMathOperator{\id}{Id}
\DeclareMathOperator{\Ran}{Ran}
\DeclareMathOperator{\Ker}{Ker}
\renewcommand{\epsilon}{\varepsilon}
\renewcommand{\setminus}{\smallsetminus}
\newcommand{\one}{\bm{1}}

\newcommand{\set}[1]{\left\{#1\right\}}
\newcommand{\pa}[1]{\left(#1\right)}
\newcommand{\bra}[1]{\left[#1\right]}

\newcommand{\brak}[1]{\left\langle#1\right\rangle}

\newtheorem{theorem}{Theorem}[section]
\newaliascnt{definition}{theorem}

\aliascntresetthe{definition}
\newaliascnt{hypothesis}{theorem}

\aliascntresetthe{hypothesis}
\newaliascnt{corollary}{theorem}
\newtheorem{corollary}[corollary]{Corollary}
\aliascntresetthe{corollary}
\newaliascnt{lemma}{theorem}
\newtheorem{lemma}[lemma]{Lemma}
\aliascntresetthe{lemma}
\newaliascnt{proposition}{theorem}
\newtheorem{proposition}[proposition]{Proposition}
\aliascntresetthe{proposition}

\theoremstyle{remark}
\newaliascnt{remark}{theorem}

\aliascntresetthe{remark}

\numberwithin{equation}{section}

\title[Exact Asymptotics 2D Euclidean Random Matching Problem]{Exact Asymptotics for the 2D Euclidean Random Matching Problem}
\author[V. Armegioiu]{Victor Armegioiu}
\address{Department of Mathematics, ETH Z\"urich, 8092 Z\"urich, Switzerland}
\email{victor.armegioiu@math.ethz.ch}
\author[M. Goldman]{Michael Goldman}
\address{CMAP, CNRS, \'Ecole polytechnique, Institut Polytechnique de Paris, 91120 Palaiseau,
France}
\email{michael.goldman@cnrs.fr}
\author[F. Grotto]{Francesco Grotto}
\address{Universit\`a di Pisa, Dipartimento di Matematica, Largo Bruno Pontecorvo 5, 56127 Pisa, Italia}
\email{francesco.grotto at unipi.it}
\author[D. Trevisan]{Dario Trevisan}
\address{Universit\`a di Pisa, Dipartimento di Matematica, Largo Bruno Pontecorvo 5, 56127 Pisa, Italia}
\email{dario.trevisan at unipi.it}

\keywords{$p$-Laplacian, white noise, random matching, optimal transport, Wiener chaos}
\subjclass[2020]{60H15, 35J92, 60D05, 49J55}
\date{\today}
\hypersetup{
  pdftitle={Exact Asymptotics for the 2D Euclidean Random Matching Problem},
  pdfauthor={Michael Goldman, Francesco Grotto, Dario Trevisan},
  pdfkeywords={p-Laplacian, white noise, random matching, optimal transport, Wiener chaos}
}

\begin{document}
\begin{abstract}
    We determine the exact first-order asymptotics of the expected optimal cost in two-dimensional random bipartite matching, for every finite power cost $q \ge 1$, on the flat torus. In the endpoint case $q=1$, this answers a question by Talagrand, in the periodic case.
    The argument involves the closely related asymptotics of the energy of the solution of the $p$-Poisson equation with a regularized white-noise source. In the limit of vanishing regularization parameter, we identify this energy as the solution of a Variational Martingale Problem on a limiting Gaussian filtration, whose value is characterized by a parabolic Monge--Amp\`ere flow.
\end{abstract}

\maketitle

%remove toc before submitting!
%\tableofcontents

%%%%%%%%%%%%%%%%%%%%%%%%%%%%%%%%%%%%%%%%%%%%%%%%%%%%%%%%%%%%%%%%%%
\section{Introduction}\label{sec:introduction}

\subsection{From Random Matching to Singular Random PDEs}

Let $(X_i)_{i\ge1}$ and $(Y_i)_{i\ge1}$ be two independent sequences of independent random points, uniformly distributed on the flat torus
$\T^2=\R^2/\Z^2$, and set
\[
    \mu_N:=\frac1N\sum_{i=1}^N\delta_{X_i},
    \qquad
    \nu_N:=\frac1N\sum_{i=1}^N\delta_{Y_i}.
\]
For $q\in[1,\infty)$, the random Euclidean bipartite matching problem asks for the asymptotic behavior of the Wasserstein distance
\[
    W_q^q(\mu_N,\nu_N)
    =\frac1N\min_{\sigma\in\mathfrak S_N}
      \sum_{i=1}^N d(X_i,Y_{\sigma(i)})^q,
\]
where $d$ is the quotient Euclidean distance on $\T^2$.  The two-dimensional case is critical.  The microscopic spacing is of order $N^{-1/2}$, but fluctuations of the empirical density persist over all scales between $N^{-1/2}$ and order one; their cumulative effect produces the logarithmic correction
\begin{equation}\label{eq:intro-matching-main}
    E [W_q^q(\mu_N,\nu_N)]\sim
    \left(\frac{\log N}{N}\right)^{q/2}.
\end{equation}
The appearance of this critical logarithm goes back to the work of
Ajtai--Koml\'os--Tusn\'ady and to subsequent developments in random matching and discrepancy theory; see \cite{AKT84,Talagrand92,Talagrand2021} and the references therein.  What is much subtler is the existence and identification of the first-order constant.  For the quadratic cost this question is by now
well understood: the Poisson-equation ansatz proposed in the statistical physics literature by \cite{CLPS14} was made rigorous in \cite{AST19}, refined on closed surfaces in \cite{AG19}, and extended to general planar domains and non-constant densities in \cite{AGT22}.  On the flat torus this yields (see also \cite{goldman2023almost} for next order asymptotics)
\begin{equation*}
     E [W_2^2(\mu_N,\nu_N)]\sim \frac{\log N}{2\pi N}.
\end{equation*}
The purpose of this paper is to determine the corresponding first-order
asymptotics for every finite power cost, including the endpoint $q=1$.

\begin{theorem}
\label{thm:matching-exact-asymptotic}
For every $q\in[1,\infty)$, there exists $\kappa_q \in (0, \infty)$ such that
\begin{equation}\label{eq:matching-exact-limit}
    \lim_{N\to\infty}
    \left(\frac{2\pi N}{\log N}\right)^{q/2}
    E \left [ W_q^q(\mu_N,\nu_N) \right ]
    =\kappa_q.
\end{equation}
\end{theorem}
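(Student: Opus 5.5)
The strategy is to reduce the matching cost to a PDE energy via the linearization (Poisson/$p$-Laplacian) ansatz, and then show that the energy has a limit by exploiting the multiscale structure of the noise.

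\textbf{Step 1: reduction to the $p$-Poisson energy.} Take $p=q'$ the dual exponent, and let $f_{N,t}$ be the heat-regularization at time $t\approx N^{-1}\log^{\alpha}N$ of $\sqrt N(\mu_N-\nu_N)$. This is approximately white noise mollified at scale $\epsilon^2=t$. By duality,
\[
W_q(\mu,\nu)\approx \sup\set{\int\phi\, d(\mu-\nu):\ \|\nabla\phi\|_{L^{p}}\le1}
\]
at the linearized level. So I expect $N^{q/2}W_q^q(\mu_N,\nu_N)$ to be comparable to $\int|\nabla u|^{p}$, where $u$ solves $-\Delta_p u=f_{N,t}$ up to normalization; in particular $\int|\nabla u|^p=\int f u$. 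The upper bound uses the Dacorogna--Moser/Benamou--Brenier flux construction with field $|\nabla u|^{p-2}\nabla u$. It needs only one-sided comparison plus the small-scale error $W_q^q(\mu_N,\mu_N\ast\rho_t)\lesssim t^{q/2}$, which is negligible against $(\log N)^{q/2}$. The lower bound uses a duality test function built from $u$, restricted to a good event where $\nabla u$ is controlled, as in the AST/AGT arguments. Subadditivity-type concentration estimates, namely Talagrand's bounds of order $(\log N/N)^{q/2}$, control the bad events. For $q=1$ ($p=\infty$) I would treat the endpoint by approximating with $q\downarrow1$, using monotonicity of $W_q$ in $q$ together with uniform estimates, or directly via Lipschitz duality.

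\textbf{Step 2: existence of the limit of the energy.} Set
\[
E_\epsilon:=E\!\left[\int|\nabla u_\epsilon|^p\right]\Big/(\log\epsilon^{-1})^{p/2},
\]
where the correct power follows from the scaling of white noise, since $\|f_\epsilon\|_{H^{-1}}^2\sim\log\epsilon^{-1}/2\pi$. Writing $f_\epsilon=\int_{\epsilon^2}^1 dW_s$ as a martingale in the heat scale $s$, each dyadic band contributes an independent Gaussian increment whose $H^{-1}$ norm has order one. After normalization, the band-by-band gradient $\nabla u$ behaves like a Brownian motion in scale-time $\tau=\log s/\log\epsilon$ taking values in fields. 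Because the $p$-Laplacian is nonlinear, however, the bands interact through the local value of $\nabla u$ at coarser scales. I would formulate the limit as a variational martingale problem: maximize
\[
E\bra{\int_0^1 \brak{\xi_\tau,dB_\tau}}\quad\text{subject to}\quad E|\xi_1|^{p}\le 1\quad\text{(in dual form)},
\]
over adapted processes $\xi$ on the limiting Gaussian filtration. Here the local gradient at scale $\tau$ is a two-dimensional Gaussian martingale, and the coarser gradient serves as the slow variable for the finer scales. Its value $V$ is given by a dynamic programming equation. Optimizing over covariances of the increments yields a parabolic Monge--Amp\`ere flow $\partial_\tau\Phi=\det D^2\Phi$-type. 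I would then set $\kappa_q$ to be the corresponding power of $V$, and show that it lies in $(0,\infty)$ by comparing with the case $p=2$, up to constants $c_p$.

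\textbf{Main obstacle.} The hard step is the scale separation in Step 2: proving that $\limsup$ and $\liminf$ of $E_\epsilon$ both equal $V$. For the upper bound on the energy, which comes from the dual lower bound on the matching cost, I would build competitors from the optimal martingale strategy. This means choosing at each dyadic band a corrector solving the linearized $p$-Poisson problem around the coarser gradient, and using the two-scale homogenization-type bound that the band's contribution decouples up to errors $o(1)$ per logarithmic scale. For the reverse bound I would use the convex dual variational formulation: any flux $\sigma$ with $\nabla\cdot\sigma=f_\epsilon$ gives $\int|\sigma|^{q}$. I would then show that the optimal $u_\epsilon$ induces an adapted process in the discretized filtration whose cost is at least $V-o(1)$, by conditional Jensen applied band by band. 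The factor $(2\pi)^{q/2}$ in the statement is absorbed by the normalization of the $H^{-1}$ norm of white noise.
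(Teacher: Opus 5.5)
Your architecture matches the paper's: reduce to the $q$-Poisson energy, reveal the noise coarse-to-fine, and solve a Bellman equation of Monge--Amp\`ere type. But the ingredient that carries the proof is missing, namely the scale-by-scale form of the divergence constraint. The limiting filtration must resolve the direction $\theta$ of each arriving frequency, i.e.\ white noise in $(s,\theta)$ rather than a two-dimensional Gaussian martingale. The reason is that the arriving mode fixes the longitudinal component $e_\theta$, and only the transverse component $e_\theta^\perp$ can be controlled; this is the statement $\sV\in\Ran\Pscale$, dually $\sF\in\Ran\Qscale$. Your limit problem imposes only $E|\xi_1|^p\le1$ (and, as written, $E\int_0^1\langle\xi_\tau,dB_\tau\rangle=0$ for every adapted $\xi$). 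Without the constraint the dual value is $\|\sG\|_{L^q}=\Gamma(1+q/2)^{1/q}$, the value of the linear Poisson flux, which is strictly larger than $\kappa_q^{1/q}$ for $q\ne2$ by \cref{lem:kappaqestimates}. Likewise, an equation $\partial_\tau\Phi=\det D^2\Phi$ cannot arise: a Bellman equation for a controlled diffusion is $1$-homogeneous in $D^2u$. It is exactly the longitudinal/transverse constraint that produces $\partial_\tau u=\frac12\sqrt{\det D_x^2u}$ through \eqref{eq:angular-minimization}. This also shows that your per-band corrector, as described (linearize the $p$-Laplacian at the coarse gradient), is myopic. It weighs fluctuations by $D^2|x|^q$, whose radial/tangential ratio is $q-1$, whereas the optimal feedback \eqref{eq:feedback-definition} uses $D_x^2u_q(\tau-s,\cdot)$, whose ratio $h_q^2$ differs from $q-1$ everywhere when $q\ne2$. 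So the nonnegative drift \eqref{drift} is strictly positive along your scheme, and the resulting constant is strictly too large. For the reverse bound, projecting the finite-$\Lambda$ optimizer onto the filtration and applying conditional Jensen leaves the real difficulty untouched: why do one-point limits of stationary solenoidal fields satisfy the limiting constraint? The paper never analyzes the optimizer. It lifts finite-chaos approximations of the \emph{limiting} primal and dual optimizers to stationary fields. \cref{lem:largest-mode-estimate} then shows that the Leray projection acts on such lifts as projection along the finest mode, up to $(\Lambda/2\pi)^{-2\delta}$ and tie errors. The upper bound uses the solenoidal lift of $\sV_q$; the lower bound uses the gradient lift of $\sF_q=A_q(\sU_q)$, Fenchel's inequality, and the exact identity $\int\sU_{q,\Lambda}\cdot\sF_\Lambda=\int\sZ_\Lambda\cdot\sF_\Lambda$.

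The reduction step also has concrete problems. After your normalization, the crude bound $W_q^q(\mu_N,P_t\mu_N)\lesssim t^{q/2}$ becomes $(\log N)^{\alpha q/2}$, which is negligible only for $\alpha<1$, while linearization requires $Nt\gg\log N$. You need a finer local estimate, heuristically of order $(\log(Nt)/N)^{q/2}$, consistent with the condition $\log(Nt_N)=o(\log N)$ under which the paper invokes \cite{AVT24}. You also never justify replacing the empirical forcing by white noise. The cutoff space has dimension of order $\Lambda_N^2=N/(\log N)^B$, so a fixed-dimensional CLT is useless. The paper instead uses Zhai's high-dimensional $\mathcal W_2$ CLT together with the quotient stability of \cref{lem:matching-quotient-stability}, and this is what forces $B>2$. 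Your normalization should be $(\log\epsilon^{-1})^{q/2}$, not $(\log\epsilon^{-1})^{p/2}$, since $|\nabla u|^p=|H|^q$. Finally, at $q=1$ the inequality $W_1\le W_q$ gives only the upper bound, and only after $\kappa_1$ is defined and $\kappa_q\to\kappa_1$ is proved (\cref{prop:qone-continuity}). The lower bound needs exactly $1$-Lipschitz test functions. The near-optimal gradients $A_q(\sU_{q,\Lambda})$ are only $L^r$-close to the unit ball, and pointwise truncation destroys their gradient structure. The paper resolves this with M\"uller's sharp truncation of gradients (\cref{lem:qone-sharp-truncation}), and returns to matching via heat-flow contraction of $W_1$.
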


For $q>1$ the limiting constant $\kappa_q$ admits an intrinsic characterization through a self-similar Monge--Amp\`ere flow, equivalently through the one-dimensional ODE in \eqref{eq:kappa-ode-intro}; for $q=1$ it is the continuous endpoint of these constants and numerical approximation gives $\kappa_1 \approx 0.8523$. In particular, we settle on $\T^2$ the existence of the renormalized expectation for every finite power cost and identify the limiting constant through a deterministic nonlinear profile. This solves Research Problem 4.5.4 in \cite{Talagrand2021}, up to replacing the torus $\T^2$ with the unit cube $[0,1]^2$.

For related results for the case $q=\infty$ we refer to \cite{otto2026quantitativehomogenizationmaximalaction}.

\subsection{From the transport problem to a nonlinear random PDE}

The starting point is the PDE viewpoint on random matching.  In the quadratic
case, after smoothing the empirical measures at a mesoscopic scale, the
optimal transport map is close to the identity and the Monge--Amp\`ere equation
can be linearized.  The density imbalance is then transported by the gradient
of a solution of Poisson's equation.  Since the smoothed empirical fluctuation
is asymptotically Gaussian, the leading matching cost is governed by the
energy of a regularized two-dimensional Gaussian free field.  This mechanism
is the core of the rigorous quadratic theory in \cite{AST19,AG19,AGT22}; see
also the quantitative Monge--Amp\`ere linearization results of
\cite{goldman2021quantitative,goldman2022fluctuation}.

For a general matching exponent $q>1$, the correct linearization is still nonlinear. The relevant potential solves
\begin{equation}\label{eq:intro-p-laplace}
    -\nabla\cdot\left(|\nabla \su|^{p-2}\nabla \su\right)=\xi,
    \quad 
    p=q/(q-1)
\end{equation}
formally with spatial white noise $\xi$ on the right-hand side.  Equivalently, the flux
\[
    \sH=-|\nabla\su|^{p-2}\nabla\su
\]
minimizes the convex $L^q$ energy among vector fields with prescribed
divergence.  The sharp transport-to-PDE comparison proved in \cite{AVT24} shows that, after mesoscopic heat
regularization, the matching problem is asymptotically equivalent to this
$q$-Poisson energy.  In particular, the remaining issue is no longer the
transport comparison itself but the asymptotic analysis of a singular random
nonlinear elliptic problem.  The same work explicitly reduces existence of the
matching limit to existence of the corresponding stochastic PDE energy limit.

The white-noise equation \eqref{eq:intro-p-laplace} has infinite energy in two
dimensions and must be regularized.  Let $\xi_\Lambda$ be white noise with
Fourier modes restricted to $0<|k|\le\Lambda$, and define the minimum flux
energy
\begin{equation}\label{eq:intro-gaussian-energy}
    \E_q(\xi_\Lambda)
    :=\inf_{\nabla\cdot H=\xi_\Lambda}
       \int_{\T^2}|H|^q.
\end{equation}
The linear Poisson flux
\[
    \sJ_\Lambda=-\nabla(-\Delta)^{-1}\xi_\Lambda
\]
has pointwise variance
\[
    m_\Lambda:=E\left [|\sJ_\Lambda(x)|^2\right]
    =\sum_{0<|k|\le\Lambda}\frac1{|k|^2}
    \sim\frac{\log\Lambda}{2\pi}.
\]
For $q=2$, $\sJ_\Lambda$ is itself the minimizer and
$E\left [\E_2(\xi_\Lambda)\right] =m_\Lambda$.  For $q\neq2$ the minimizer is a nonlinear functional of all Fourier modes, and there is no pointwise or mode-by-mode formula. The core of the paper is to prove  nevertheless that the normalized energy has a deterministic limit, see \cref{thm:qone-ultraviolet1,thm:main}:
\begin{equation}\label{eq:intro-gaussian-main}
    m_\Lambda^{-q/2}E \left [ \E_q(\xi_\Lambda) \right] 
    \longrightarrow \kappa_q,
    \qquad \Lambda\to\infty.
\end{equation}
This Gaussian ultraviolet limit is the central result from which
\eqref{eq:matching-exact-limit} is deduced.

\subsection{Scale filtration, martingale control, and Monge--Amp\`ere flow}

The main difficulty in establishing \eqref{eq:intro-gaussian-main} is that the divergence constraint is local in Fourier space while the $L^q$ minimization is nonlinear and therefore couples all frequencies.  A literal shell-by-shell optimization would lose these nonlinear interactions. To overcome this, our approach is based on an approximation by a Variational Martingale Problem in which the Fourier modes are revealed according to their logarithmic radius.  The logarithmic divergence of $m_\Lambda$ makes the accumulated shell variance the natural time variable.  After normalization, the scale variable ranges over a bounded interval, and the lattice of frequency directions becomes asymptotically uniform on the circle.

At a fixed spatial point, the coarse-to-fine filtration turns an admissible nonlinear correction into a predictable stochastic control. The solenoidal constraint has a particularly simple asymptotic form: the new shell fixes the longitudinal component in the direction $e_\theta$ of the arriving frequency, whereas the admissible correction is transverse, in the direction $e_\theta^\perp$.  Thus the limiting controlled process has the form
\begin{equation*}
    \sX_\tau
    =x
    +\int_0^\tau\int_0^\pi
      \big(e_\theta+a_s(\theta)e_\theta^\perp\big)
      \,\sw^1(ds,d\theta)
    +\int_0^\tau\int_0^\pi
      b_s(\theta)e_\theta^\perp\,\sw^2(ds,d\theta),
\end{equation*}
where $a$ and $b$ are predictable controls and $\sw^1,\sw^2$ are independent
Gaussian white noises in scale and angle.  The value function is the minimum
of $E\bra{|\sX_\tau|^q}$ over all such controls. Let us point out that the presence of $\sw^2$ and $b$ is necessary in order to represent every admissible competitor even though for the optimal control $b=0$ so the second integral drops out.

Dynamic programming then leads to an explicit deterministic equation.  If $u_q(\tau,x)$ is the value function and $H=D_x^2u_q(\tau,x)$, minimizing the quadratic variation over the transverse controls gives the angular identity
\[
    \inf_{a,b}\int_0^\pi
    \Big[
      (e_\theta+a e_\theta^\perp)^T H(e_\theta+a e_\theta^\perp)
      +b^2(e_\theta^\perp)^THe_\theta^\perp
    \Big]\frac{d\theta}{\pi}
    =\sqrt{\det H}.
\]
Consequently the Bellman equation is the parabolic Monge--Amp\`ere flow
\begin{equation}\label{eq:intro-ma-flow}
    \partial_\tau u_q=\frac12\sqrt{\det D_x^2u_q},
    \qquad
    u_q(0,x)=|x|^q.
\end{equation}
The power-law initial datum is compatible with Brownian scaling, and the
solution is self-similar:
\[
    u_q(\tau,x)
    =\tau^{q/2}\phi_q\left(\frac{|x|}{\sqrt\tau}\right).
\]
The radial profile $\phi_q$ is characterized by the ODE
\begin{equation}\label{eq:ode}
    q\phi_q(s)-s\phi_q'(s)=\sqrt{\phi_q''(s)\phi_q'(s)s^{-1}},
\end{equation}
in particular $ \kappa_q=u_q(1,0)=\phi_q(0)$. This gives a deterministic characterization of the nonlinear matching
constant and explains why $q=2$ is exceptional: only in that case is the linear Poisson flux already optimal.

Turning this formal scale picture into a theorem requires two further steps. First, we identify the scaling limit of the solenoidal constraint on finite Wiener chaos expansions and prove the corresponding lower bound for arbitrary admissible fields.  Second, we construct stationary spatial recovery sequences by lifting finite-chaos martingale controls back to the discrete Fourier noises.  These two directions show that the scale martingale problem captures the full ultraviolet asymptotics rather than merely providing a restricted class of competitors.

A central mechanism for the construction is a scale decoupling for the Leray projection as seen in  \cref{lem:largest-mode-estimate}. For a stationary lift  built from a mixture of frequencies, its Leray projection is asymptotically indistinguishable from  the lift obtained by projecting only through the finest mode present. In the limit problem this property is also apparent from \eqref{eq:homogeneous-chaos-integrand}.

Our proof bears many analogies with recent scale-by-scale homogenization approaches to the study of critical drift-diffusion equations, see \cite{armstrong2026superdiffusion,armstrong2024superdiffusive,chatzigeorgiou2025gaussian,morfe2025criticaldriftdiffusionequationintermittent}, the analogy being the strongest with \cite{morfe2025criticaldriftdiffusionequationintermittent}.
\subsection{Return to matching and the endpoint}

To pass from the Gaussian ultraviolet problem back to random matching, we
regularize the empirical measures at a mesoscopic time
$t_N=N^{-1}(\log N)^B$, $B>2$.  The result of \cite{AVT24} compares the
matching cost with the energy of the corresponding $q$-Poisson equation for $q>1$.  We
then replace the heat cutoff by a sharp Fourier cutoff and view the resulting
linear flux as a normalized sum of independent vectors in a finite-dimensional
Hilbert space.  A quantitative high-dimensional central limit theorem in
$\mathcal W_2$ due to Zhai \cite{Zhai18}, together with stability of the minimum
flux energy with respect to the linear Poisson field, replaces this empirical
forcing by cutoff white noise at an error negligible on the critical scale.
Since
\[
    \log\Lambda_N=\frac12\log N+o(\log N),
\]
the Gaussian asymptotic \eqref{eq:intro-gaussian-main} gives precisely the
factor $(\log N/(2\pi N))^{r/2}$ in
\eqref{eq:intro-matching-main}.

The endpoint $q=1$ cannot be obtained by simply inserting $q=1$ in the
strictly convex theory.  The primal problem becomes the Beckmann minimum-flow
problem in total variation and the dual problem is a pointwise Lipschitz
constraint.  We treat this endpoint directly, using a sharp truncation theorem
for gradients together with the dual formulation, and prove continuity of the
constants as $q\downarrow1$.  Heat-flow contraction yields the endpoint lower
bound for matching, while the upper bound follows by approximation from
$q>1$.  This gives the same asymptotic formula for $W_1$ and completes the
family of finite power costs.

%----------------------------------------------
\subsection{Notation and Organization of the Paper} On $\T^2:=\R^2/\Z^2$ we use the Fourier transform convention
\begin{equation*}
    f(x)=\sum_{k\in{2\pi \Z^2}}\widehat f(k)\e^{ik\cdot x},
    \qquad
    \widehat f(k)=\int_{\T^2}f(x)\e^{-ik\cdot x}\,dx.
\end{equation*}
Thus Parseval's identity reads $\int_{\T^2}|f|^2=\sum_{k\in{2\pi \Z^2}}|\widehat f(k)|^2$.
Landau's $o$ and $O$ have their usual meaning, $x\lesssim y$ is equivalent to $x=O(y)$, $x\sim y$ is equivalent to $x=y(1+o(1))$. We denote by $\Psol$ the Leray projection onto periodic solenoidal vector fields, including constants, and let $\Qgrad=\id-\Psol$: in Fourier variables,
\begin{equation}\label{eq:finite-hodge}
 \widehat{\Psol F}(k)=
 \left(\id-\frac{k\otimes k}{|k|^2}\right)\widehat F(k),
 \qquad
 \widehat{\Qgrad F}(k)=
 \frac{k\otimes k}{|k|^2}\widehat F(k)
 \quad(k\ne0),
\end{equation}
with $\widehat{\Psol F}(0)=\widehat F(0)$ and
$\widehat{\Qgrad F}(0)=0$.  Thus $\Qgrad F$ is a mean-zero gradient field and $\Psol F$ is solenoidal.
Symbols $\mathsf a,\mathsf A$ are reserved to random scalar and vector fields. We use the multiple Wiener integral convention of \cite{Janson97}.
Symbols $p,q$ are reserved to conjugate exponents $p=q/(q-1)$, and we write $A_q(z):=|z|^{q-2}z$, $A_q(0)=0$.
For a zero-mean distribution $g$ on $\T^2$ and $s\ge 1$, define
\begin{equation}\label{eq:matching-fixed-torus-energy}
    \E_s(g)
    :=\inf\left\{
        \int_{\T^2}|H|^s:
        H\in L^s(\T^2;\R^2),\ \nabla\cdot H=g
    \right\}.
\end{equation}

The paper is divided into two parts.
In \cref{sec:scale-problem}, after setting up the ultraviolet cutoff variational problem for \eqref{eq:intro-p-laplace}, we formulate the scale martingale problem and solve its Bellman equation through the Monge--Amp\`ere flow.  We then prove the Hodge limit and the stationary recovery construction, including the $q=1$ endpoint.
In \cref{sec:matching-application} we transfer the Gaussian result to the bipartite matching problem by the nonlinear PDE reduction, cutoff stability, and quantitative Gaussian replacement.  
The \cref{app:appendix} collects technical analytic and probabilistic results.

%%%%%%%%%%%%%%%%%%%%%%%%%%%%%%%%%%%%%%%%%%%%%%%%%%%%%%%%%%
\section{A Singular Random \texorpdfstring{$p$}{p}-Poisson Equation}
\label{sec:scale-problem}
Let $\Lambda\ge2\pi$, and $\xi_\Lambda$ be the zero-averaged white noise with UV cutoff at scale $\Lambda$:
\begin{equation*}
    \xi_\Lambda(x)
    =\sum_{0<|k|\le\Lambda}
      \widehat\xi(k)\e^{ik\cdot x},
    \qquad
    \widehat\xi(-k)=\overline{\widehat\xi(k)},
    \quad 
    E\bra{\widehat\xi(k)\overline{\widehat\xi(k')}}=\one_{k=k'}.
\end{equation*}
Throughout the remainder of the paper, $\Lambda$ is sufficiently large that every frequency band introduced below is nonempty. All cutoff fields are realized on a single probability space using the same family of Fourier coefficients. The UV-cutoff problem \eqref{eq:intro-gaussian-energy} is well-posed: the proof of the following is a standard argument. We  report the argument for the measurability in \cref{app:appendix}.

\begin{lemma}\label{prop:finite-cutoff-variational}
For $q\in(1,\infty)$, and every smooth $g$, the variational problem \eqref{eq:matching-fixed-torus-energy} defining $\E_q(g)$ 
has a unique minimizer $\sH$.  It is characterized by the existence of a periodic potential $\su\in W^{1,p}(\T^2)$ such that
\begin{equation}\label{eq:dual-gradient}
    |\sH|^{q-2}\sH=-\nabla\su.
\end{equation}
Equivalently,
\begin{equation}\label{eq:flux-from-potential}
    \sH=-|\nabla\su|^{p-2}\nabla\su,
\end{equation}
and $\su$ is a weak solution of
\begin{equation}\label{eq:p-laplace}
    -\nabla\cdot\pa{|\nabla\su|^{p-2}\nabla\su}=g.
\end{equation}
The potential is unique up to an additive constant.  Moreover, if 
$\sH_{q,\Lambda}$ is the potential corresponding to $g= \xi_\Lambda$, it is strongly measurable as a $L^q(\T^2;\R^2)$-valued random variable.
\end{lemma}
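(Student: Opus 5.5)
The plan is to treat existence and uniqueness by the direct method, then obtain the potential from the dual problem, and finally get measurability from continuity of the solution map. Throughout we assume $g$ has zero mean, as in \eqref{eq:matching-fixed-torus-energy}.

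\textbf{Existence and uniqueness.} First I check that the admissible set is nonempty. The field $H_0=-\nabla(-\Delta)^{-1}g$ is smooth and satisfies $\nabla\cdot H_0=g$. The admissible set $\{H\in L^q:\nabla\cdot H=g\}$ is an affine subspace; it is closed and convex, hence weakly closed in $L^q$. The functional $H\mapsto\int|H|^q$ is coercive, strictly convex and weakly lower semicontinuous. Since $L^q$ is reflexive for $q\in(1,\infty)$, a minimizing sequence has a weakly convergent subsequence, and its limit is a minimizer $\sH$. Strict convexity gives uniqueness.

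\textbf{Euler--Lagrange equation and the potential.} Perturbing $\sH$ by an arbitrary divergence-free $K\in L^q$ gives
\[
\int|\sH|^{q-2}\sH\cdot K=0\quad\text{for all such }K.
\]
Hence $V:=|\sH|^{q-2}\sH\in L^p$ annihilates the divergence-free fields in $L^q$.

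To conclude that $V$ is a gradient, I use the Hodge decomposition $V=\Qgrad V+\Psol V$. Both $\Qgrad$ and $\Psol$ are bounded on $L^p$, since they are Fourier multipliers of degree zero, by Mikhlin/Marcinkiewicz on the torus. Testing the annihilation identity against $K=\Psol\Phi$ for smooth $\Phi$, and using that $\Psol$ is self-adjoint, gives $\Psol V=0$. The constant mode is included in this, because constant fields are divergence free. Therefore $V=\Qgrad V=-\nabla\su$ for some $\su\in W^{1,p}$, by a Poincaré-type reconstruction of the potential in Fourier variables. This is \eqref{eq:dual-gradient}.

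Inverting the map $z\mapsto|z|^{q-2}z$, whose inverse is $w\mapsto|w|^{p-2}w$ because $(q-1)(p-1)=1$, gives \eqref{eq:flux-from-potential}. The constraint $\nabla\cdot\sH=g$ then yields \eqref{eq:p-laplace} in weak form.

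Conversely, suppose $\su$ weakly solves \eqref{eq:p-laplace} and $H$ is given by \eqref{eq:flux-from-potential}. For any admissible $H'$, convexity gives
\[
\int|H'|^q\ge\int|H|^q+q\int|H|^{q-2}H\cdot(H'-H)=\int|H|^q-q\int\nabla\su\cdot(H'-H)=\int|H|^q,
\]
because $H'-H$ is divergence free. So $H$ is the minimizer, which proves the characterization. Uniqueness of $\nabla\su$ follows from uniqueness of $\sH$, so $\su$ is unique up to a constant.

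\textbf{Measurability.} I will show that the map $g\mapsto\sH(g)$ is continuous from the finite-dimensional space $\mathcal T_\Lambda$ of real trigonometric polynomials with frequencies $0<|k|\le\Lambda$ into $L^q$. Since $\xi_\Lambda$ is a measurable random variable in $\mathcal T_\Lambda$, and $L^q$ is separable, strong measurability follows.

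For continuity, take $g_n\to g$ in $\mathcal T_\Lambda$ and set $H^n=\sH(g_n)$. The energies are bounded, because $\E_q(g_n)\le\int|H_0(g_n)|^q$, which is bounded. Hence the $H^n$ are bounded in $L^q$. Any weak limit $H$ satisfies the constraint $\nabla\cdot H=g$. Moreover, $\E_q$ is continuous along the sequence: for the upper bound, use the competitor $\sH(g)+H_0(g_n-g)$. Combining lower semicontinuity with this upper bound gives $\int|H|^q\le\E_q(g)$, so $H=\sH(g)$, and we also get convergence of norms. In the uniformly convex space $L^q$, weak convergence together with convergence of norms gives strong convergence. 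Since every subsequence has a further subsequence converging to $\sH(g)$, the whole sequence converges.

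\textbf{Main obstacle.} The only delicate step is identifying $V$ as a gradient in $L^p$ for $p\ne2$, which needs the $L^p$ boundedness of the Leray projection. I will take that boundedness as standard.
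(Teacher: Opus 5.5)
Your proposal is correct and follows essentially the paper's route. The paper treats existence, uniqueness and the Euler--Lagrange characterization as standard, and proves measurability with exactly your argument: the competitor $\sH(g)+H_0(g_n-g)$, weak lower semicontinuity plus uniqueness to identify the limit, then uniform convexity to upgrade weak to strong convergence. The only difference is that the paper parametrizes by the linear flux $J$ in $L^q$ rather than by $g$ in $\mathcal T_\Lambda$. The step you flag as delicate is also easier than you suggest. Testing the annihilation identity against $\nabla^\perp\psi$ and against constant fields already shows that $|\sH|^{q-2}\sH$ is curl-free with zero mean, hence a periodic gradient, so no $L^p$-boundedness of $\Psol$ is needed.
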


The linear competitor 
$\sJ_{\Lambda}=-\nabla(-\Delta)^{-1}\xi_\Lambda$,
equivalently $\widehat{\sJ}_{\Lambda}(k) =-\frac{ik}{|k|^2}\widehat\xi(k)$, is the minimizer for $q=2$. Coordinate reflections and exchange of the two axes preserve the square lattice and the radial cutoff, therefore 
\begin{equation}\label{eq:linear-variance}
    E[\sJ_{\Lambda}(x)\otimes
     \sJ_{\Lambda}(x)]
    =\frac{m_\Lambda}{2}\id,\quad 
    m_\Lambda:=E\bra{|\sJ_{\Lambda}(x)|^2}
    =\sum_{0<|k|\le\Lambda}\frac1{|k|^2}\sim \frac{\log\Lambda}{2\pi}.
\end{equation}
In particular, $\sZ_\Lambda=m_\Lambda^{-1/2}\sJ_{\Lambda}$ at every point is a centered Gaussian vector with covariance $\id/2$, and
\begin{equation*}
 E\bra{\int_{\T^2}|\sJ_{\Lambda}|^r}
 =\Gamma\left(1+\frac r2\right)m_\Lambda^{r/2},
 \qquad r>0.
\end{equation*}
The main result of this section is the following theorem.
\begin{theorem}\label{thm:main}
Fix $q\in(1,\infty)$, set $p=q/(q-1)$, and let $\sH_{q,\Lambda}$ be the
unique minimizer of \eqref{eq:matching-fixed-torus-energy} with $g=\xi_\Lambda$.  There is a constant
$\kappa_q\in(0,\infty)$, depending only on $q$, such that
\begin{equation}\label{eq:main-normalized-limit}
    \lim_{\Lambda\to\infty}m_\Lambda^{-q/2}
    E\bra{\int_{\T^2}|\sH_{q,\Lambda}|^q}
    =\kappa_q.
\end{equation}
\end{theorem}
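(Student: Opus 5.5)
The plan is to identify $\kappa_q$ as the value $u_q(1,0)=\phi_q(0)$ of the scale martingale problem described in the introduction, and to prove matching lower and upper bounds for $m_\Lambda^{-q/2}E[\int|\sH_{q,\Lambda}|^q]$. By stationarity, $E[\int_{\T^2}|\sH_{q,\Lambda}|^q]=E[|\sH_{q,\Lambda}(0)|^q]$ for the stationary minimizer, so everything reduces to a one-point problem. First I would set up the scale filtration: reparametrize frequencies by the normalized time $\tau(k)=m_\Lambda^{-1}\sum_{0<|k'|\le|k|}|k'|^{-2}\in[0,1]$, and let $\F^\Lambda_\tau$ be generated by the modes with $\tau(k)\le\tau$. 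Write $\sH=\sJ_\Lambda+\sK$ with $\sK=\Psol\sK$ solenoidal, since the divergence constraint forces the gradient part to be exactly $\sJ_\Lambda$. Expanding $m_\Lambda^{-1/2}\sH(0)$ in Wiener chaos and projecting onto the filtration gives a martingale $M_\tau=E[m_\Lambda^{-1/2}\sH(0)\,|\,\F^\Lambda_\tau]$ with $M_1=m_\Lambda^{-1/2}\sH(0)$, whose increments across a thin shell near direction $\theta$ consist of the longitudinal $e_\theta$-part, fixed by $\sJ_\Lambda$, plus a transverse $e_\theta^\perp$-part coming from $\sK$.

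For the lower bound I would show that, as $\Lambda\to\infty$ and along truncations to finitely many chaoses (justified by $L^q$ density and convexity), the solenoidal constraint becomes asymptotically exactly ``increment $= (e_\theta+a\,e_\theta^\perp)\,\sw^1+b\,e_\theta^\perp\,\sw^2$'' with predictable $a,b$. Here equidistribution of lattice directions, with shells of log-radius becoming uniform in angle, yields the white noise in $(s,\theta)$. Then $E|M_1|^q\ge\inf E|\sX_1|^q$ by a verification argument: apply Itô's formula to $u_q(1-\tau,M_\tau)$, using that $u_q$ solves \eqref{eq:intro-ma-flow} and the angular identity $\inf_{a,b}(\cdots)=\sqrt{\det H}$, so that $u_q(1-\tau,M_\tau)$ is a submartingale. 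This would require regularity and convexity of the self-similar profile $\phi_q$ from the ODE \eqref{eq:ode}, together with growth bounds permitting localization. Hence $\liminf m_\Lambda^{-q/2}E\int|\sH|^q\ge u_q(1,0)$.

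For the upper bound I would take $\epsilon$-optimal controls for the limit problem, smooth and depending on finitely many chaos coordinates, namely the feedback control $a^*=-\langle e_\theta,He_\theta^\perp\rangle/\langle e_\theta^\perp,He_\theta^\perp\rangle$ and $b=0$, discretized in scale. I would then lift them to stationary random fields built from products of Fourier modes, and apply $\Psol$ to get admissible competitors $\sJ_\Lambda+\Psol(\cdot)$. The decisive tool is the scale-decoupling estimate \cref{lem:largest-mode-estimate}: the Leray projection of a multi-frequency product is asymptotically the projection onto the finest mode, which reproduces exactly the transverse $e_\theta^\perp$ structure. By a chaos CLT (fourth moment/Nualart–Peccati) the one-point law of the competitor converges to $\sX_1$, and uniform integrability of $q$-th moments follows from hypercontractivity, giving $\limsup\le u_q(1,0)+\epsilon$. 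Finally, $\kappa_q\in(0,\infty)$ because $\phi_q(0)>0$ by Jensen ($|E\sX|\ldots$, indeed the longitudinal part has variance bounded below) and $\kappa_q\le\Gamma(1+q/2)$ by the linear competitor.

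I expect the main obstacle to be the lower bound's identification step: proving that for an arbitrary admissible $\sK$, not just finite-chaos ones, the conditional increments are asymptotically of the constrained transverse form uniformly enough to pass to the limit. The cross-scale interactions in the Leray projection must be shown negligible, and a truncation in chaos order must be controlled in $L^q$ for $q\neq2$. I would first attempt this via a quantitative version of \cref{lem:largest-mode-estimate} combined with the Mehler semigroup to reduce to bounded chaos order.
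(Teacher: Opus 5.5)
Your upper bound is essentially the paper's: lift finite-chaos, continuous-kernel approximants of a near-optimal element of $\Ran_{L_1^q}\Pscale$, apply $\Psol$, and control the error with \cref{lem:largest-mode-estimate} and \eqref{eq:tie-bound}. One small correction: the limit $\sX_1$ is not Gaussian, so a fourth-moment theorem is not the right tool. For fixed continuous finite-rank kernels, the lifts are polynomials in finitely many Gaussian functionals whose covariances converge by \cref{lem:log-polar-equidistribution}, and this already gives convergence of all mixed moments.

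The genuine gap is the lower bound, exactly at the step you flag. Your submartingale argument needs, for the actual minimizer $\sH_{q,\Lambda}$, that the one-point martingale $M_\tau$ asymptotically has increments $(e_\theta+a\,e_\theta^\perp)\,\sw^1$ plus a part orthogonal to $\sw^1$. The outline supplies no mechanism for this. Your description of the increments is false at finite $\Lambda$ unless the kernels avoid near-ties. A chaos component of $\sK$ whose largest mode lies in the shell has its output direction governed by $P_K$, $K=\sum_j\sigma_jn_j$, not by $P_{n_{j_*}}$. Truncation in chaos by $L^q$ density is not uniform in $\Lambda$, because the field itself depends on $\Lambda$. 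Even granting a $\Lambda$-uniform truncation, \cref{lem:largest-mode-estimate} bounds the Leray defect only by $\|f_d\|_\infty^2$ times the tie mass $\mu_\Lambda^{\otimes d}(D_{d,\delta}\times\Theta^d)$. That says nothing for $\Lambda$-dependent kernels that are merely bounded in $L^2$. Such kernels may concentrate on tuples of comparable radii, where $K$ can be a much lower frequency of arbitrary direction and same-shell products generate asymptotically new noise. Proving that none of this can lower the cost is essentially the whole difficulty, and the proposal leaves it open.

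The paper never analyzes the minimizer; it uses convex duality. \cref{lem:limiting-duality} gives the exact dual optimizer $\sF_q=A_q(\sU_q)\in\Ran_{L_1^p}\Qscale$ with $E[\sG\cdot\sF_q-\frac1p|\sF_q|^p]=\kappa_q/q$. \cref{cor:recovery}(2) lifts it, through fixed continuous finite-chaos kernels where \cref{lem:largest-mode-estimate} does apply, to stationary mean-zero gradient fields $\sF_\Lambda$. Since $m_\Lambda^{-1/2}\sH_{q,\Lambda}-\sZ_\Lambda$ is solenoidal, $\int_{\T^2}m_\Lambda^{-1/2}\sH_{q,\Lambda}\cdot\sF_\Lambda=\int_{\T^2}\sZ_\Lambda\cdot\sF_\Lambda$ sample by sample. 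Fenchel's inequality then gives $\frac1q m_\Lambda^{-q/2}E\int_{\T^2}|\sH_{q,\Lambda}|^q\ge E\int_{\T^2}\bigl(\sZ_\Lambda\cdot\sF_\Lambda-\frac1p|\sF_\Lambda|^p\bigr)\to\kappa_q/q$. This bound holds for every admissible field at once, so both directions reduce to recovery of fixed limit objects. The It\^o verification with $u_q$ is used only in the continuum (\cref{lem:ucontrol}) to identify $\kappa_q=u_q(1,0)$, not in the passage $\Lambda\to\infty$.
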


Clearly $\kappa_2=1$. In terms of $\Lambda$, we are equivalently claiming that
\begin{equation}\label{eq:main-log-limit}
    E\bra{\int_{\T^2}|\sH_{q,\Lambda}|^q}
    \sim\frac{\kappa_q}{(2\pi)^{q/2}}(\log\Lambda)^{q/2},\qquad
    \Lambda\to\infty.
\end{equation}
The study of the ODE \eqref{eq:ode} allows the following estimates which we prove in \cref{app:appendix}:
\begin{lemma}\label{lem:kappaqestimates}
    For $q>1$, $q\neq 2$, $\Gamma\pa{1+\frac p2}^{1-q}
    \le\kappa_q
    <\Gamma\pa{1+\frac q2}$.
\end{lemma}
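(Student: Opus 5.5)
The plan is to use the control (Variational Martingale Problem) characterization $\kappa_q=u_q(1,0)=\inf E\bra{|\sX_1|^q}$, with $\sX_0=0$. Here the infimum is over predictable controls $a,b$, and the noises $\sw^1,\sw^2$ are normalized so that the angular measure is $d\theta/\pi$ on $[0,\pi)$ and scale runs over $[0,1]$. The upper bound comes from testing the trivial control. The lower bound comes from a Hölder duality against the linear Gaussian field.

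\emph{Upper bound.} Take $a=b=0$. Then $\sX_1=\sG:=\int_0^1\int_0^\pi e_\theta\,\sw^1(ds,d\theta)$ is a centered Gaussian vector with covariance $\int_0^\pi e_\theta\otimes e_\theta\,\frac{d\theta}{\pi}=\id/2$. Hence $E|\sG|^r=\Gamma(1+r/2)$ for every $r>0$, exactly as for $\sZ_\Lambda$ after \eqref{eq:linear-variance}, and so $\kappa_q\le\Gamma(1+q/2)$.

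For strictness I would argue on the Bellman equation \eqref{eq:intro-ma-flow}. Set $v(\tau,x)=E|x+\sqrt\tau\,\sG|^q$, which solves $\partial_\tau v=\frac14\Delta v$. Since $v$ is convex, the AM--GM inequality gives $\frac14\Delta v\ge\frac12\sqrt{\det D^2_xv}$, so $v$ is a supersolution of \eqref{eq:intro-ma-flow}. Equality holds only where $D^2_xv$ is a multiple of the identity. For the radial function $v(\tau,\cdot)$ with $q\neq2$, the radial and tangential second derivatives differ on an open set; for instance, at $\tau=0$ they are $q(q-1)|x|^{q-2}$ and $q|x|^{q-2}$. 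The comparison principle for the flow, or equivalently the ODE \eqref{eq:ode} compared with the profile of $v$, then gives $u_q(1,0)<v(1,0)$.

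A more hands-on alternative is to perturb the control. Take $a_s(\theta)=\epsilon\,\alpha(\sX_s,\theta)$ with a suitable smooth $\alpha$, supported at late times, for example $\alpha(x,\theta)$ proportional to the off-diagonal entry $e_\theta^{\perp T}D^2v\,e_\theta$. By Itô's formula the first-order variation of $E|\sX_1|^q$ is
\[
2\epsilon\,E\int\!\!\int \alpha\, e_\theta^{\perp T} D^2_x v(1-s,\sX_s)\,e_\theta\,\frac{d\theta}{\pi}\,ds,
\]
and this quantity is nonzero precisely because $D^2v$ is not isotropic when $q\ne2$.

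\emph{Lower bound.} Let $\sX_1$ be any admissible terminal value with $\sX_0=0$. The key observation is that $e_\theta^\perp\cdot e_\theta=0$ and that $\sw^2$ is independent of $\sw^1$. By the Itô isometry, $E[\sX_1\cdot\sG]=\int_0^1\int_0^\pi E[(e_\theta+a_s e_\theta^\perp)\cdot e_\theta]\,\frac{d\theta}{\pi}\,ds=1$. This is the martingale analogue of the fact that $E\int H\cdot\sJ_\Lambda=E\int|\sJ_\Lambda|^2$, which holds because $H-\sJ_\Lambda$ is solenoidal. Applying Hölder,
\[
1\le \pa{E|\sX_1|^q}^{1/q}\pa{E|\sG|^p}^{1/p}=\pa{E|\sX_1|^q}^{1/q}\,\Gamma\pa{1+\tfrac p2}^{1/p}.
\]
Raising to the power $q$ and using $q/p=q-1$ gives $E|\sX_1|^q\ge\Gamma(1+p/2)^{1-q}$. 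Taking the infimum over controls yields the lower bound on $\kappa_q$.

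The main obstacle is the strictness of the upper bound. Both the comparison principle for the degenerate flow \eqref{eq:intro-ma-flow} and the rigorous first-variation argument require some care: one has to check that the supersolution inequality is strict on a set that propagates to the value at $(1,0)$. I would first try the ODE route, comparing $\phi_q$ with the explicit profile of $v$ via \eqref{eq:ode}, since it is one-dimensional and self-similar.
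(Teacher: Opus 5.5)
Your two non-strict bounds are exactly the paper's argument. The zero control gives $\kappa_q\le E|\sG|^q=\Gamma(1+\frac q2)$. The identity $E[\sX_1\cdot\sG]=1$, which the paper phrases as $\sG\in\Ran\Qscale$ and $\sV\in\Ran\Pscale$, combined with H\"older gives $\kappa_q\ge\Gamma(1+\frac p2)^{1-q}$.

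\textbf{The gap.} The strict inequality $\kappa_q<\Gamma(1+\frac q2)$ is the only real content of the upper bound, and you do not prove it.

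A comparison principle applied to the supersolution $v$ would only give $u_q\le v$, which is the zero-control bound you already have. Strictness at the single point $(1,0)$ needs more, and none of it is supplied. In the PDE version you would need a strong maximum principle for a degenerate fully nonlinear operator on $\R^2$ with power growth at infinity. In the ODE version you would need a minimum principle for the linearized singular operator on $(0,\infty)$, plus a comparison of the $s^{q-2}$ corrections of the two profiles at infinity.

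The first-variation argument is, as written, only a heuristic. The controlled process itself depends on $\epsilon$, and the quadratic term must be controlled. Also, with the $d\theta/\pi$ normalization the linear coefficient is $\epsilon$, not $2\epsilon$.

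\textbf{How the paper closes it.} No comparison principle is needed. Apply It\^o's formula to the explicit $C^{1,2}$ value function $u_q$ along the \emph{uncontrolled} path $\sG_s=\int_0^s\int_0^\pi e_\theta\,\sw^1(dr,d\theta)$. The zero-control generator is $\frac14\trace D_x^2$, and $u_q$ solves $\partial_\tau u_q=\frac12\sqrt{\det D_x^2u_q}$. So the drift of $u_q(1-s,\sG_s)$ is the AM--GM defect of $H=D_x^2u_q$:
\[
\cD_q=\tfrac14\trace H-\tfrac12\sqrt{\det H}=\tfrac14\bigl(\sqrt{\lambda_{\mathrm{rad}}}-\sqrt{\lambda_{\mathrm{tan}}}\bigr)^2 .
\]
Letting $\varepsilon\downarrow0$, using \cref{lem:bellman-terminal} and monotone convergence, gives the exact identity
\[
E\bigl[|\sG|^q\bigr]-\kappa_q=\int_0^1E\bigl[\cD_q(1-s,\sG_s)\bigr]\,ds .
\]
By \eqref{eigenD2u}, $\lambda_{\mathrm{rad}}/\lambda_{\mathrm{tan}}=h_q^2$, and this is $\ne1$ off the origin by the strict range in \cref{lem:h}. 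Hence $\cD_q\ge c_q>0$ on $\{\frac12\le r\le\frac34,\ 1\le|x|\le2\}$. For $s\in[\frac14,\frac12]$, $\sG_s$ lands in this set with probability bounded below, so the right-hand side is strictly positive. The point is to use the anisotropy of $D_x^2u_q$ rather than that of $D_x^2v$; then the zero control itself exhibits the gap and no competitor has to be built.

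\textbf{Your perturbation idea also works if made exact.} This is a genuinely different route. It needs only the bounded-control representation and no information on $h_q$.

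Take $b=0$ and
\[
a_s(\theta)=-\epsilon\chi(\sX_s)\,e_\theta^{\perp\top}D_x^2v(1-s,\sX_s)\,e_\theta \quad\text{for } s\in[\tfrac14,\tfrac12],
\]
with $a=0$ otherwise. Here $0\le\chi\le1$ is smooth and compactly supported where $D_x^2v(1-s,\cdot)$ is anisotropic, for instance at large $|x|$, where $\lambda_{\mathrm{rad}}/\lambda_{\mathrm{tan}}\to q-1$.

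By independence of the last increment, $E|\sX_1|^q=E[v(\frac12,\sX_{1/2})]$. It\^o's formula on $[0,\frac12]$ then gives, with $D_x^2v$ evaluated at $(1-s,\sX_s)$ and for $\epsilon$ small,
\begin{align*}
E|\sX_1|^q-v(1,0)
&=E\int_{1/4}^{1/2}\int_0^\pi\Bigl[a\,e_\theta^{\perp\top}D_x^2v\,e_\theta+\tfrac12a^2\,e_\theta^{\perp\top}D_x^2v\,e_\theta^\perp\Bigr]\frac{d\theta}{\pi}\,ds\\
&\le-\frac\epsilon2\,E\int_{1/4}^{1/2}\int_0^\pi\chi(\sX_s)\bigl(e_\theta^{\perp\top}D_x^2v\,e_\theta\bigr)^2\frac{d\theta}{\pi}\,ds .
\end{align*}
As $\epsilon\to0$, the last expectation converges to the same quantity evaluated along $\sG_\cdot$, which is strictly positive. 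Hence $\kappa_q\le E|\sX_1|^q<v(1,0)=\Gamma(1+\frac q2)$.
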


%-----------------------------------------
\subsection{A Variational Martingale Problem}
Define, for $\theta \in [0,\pi]$,
\begin{equation*}
    e_\theta=(\cos\theta,\sin\theta),
    \quad
    e_\theta^\perp=(-\sin\theta,\cos\theta),
    \quad
    P_\theta=e_\theta^\perp\otimes e_\theta^\perp,
    \quad
    Q_\theta=e_\theta\otimes e_\theta.
\end{equation*}
Let $(\Omega,\F,P)$ support two independent real Gaussian white noises $\sw^{1,2}$ on $[0,\infty)\times[0,\pi]$ with control measure $ds\,d\theta/\pi$, and let $(\F_s)_{s\ge0}$ be their natural completed filtration.  For $r\in[1,\infty)$, set $L_1^r:=L^r(\Omega,\F_1,P;\R^2)$. Every random vector field $\sF\in L_1^2$ has a unique representation
\begin{equation}\label{eq:white-noise-representation}
    \sF=E[\sF]+
    \sum_{a=1}^2\int_0^1\int_0^\pi
    \Phi_{\sF}^a(s,\theta)\,\sw^a(ds,d\theta),
\end{equation}
where the integrands are predictable and belong to
$L^2(\Omega\times[0,1]\times[0,\pi];\R^2)$.

\begin{lemma}\label{lem:scale-projections}
For $\sF\in L_1^2$, the linear operators
\begin{align}
 \Pscale\sF
 &=E[\sF]+
 \int_0^1\int_0^\pi
 P_\theta\Phi_{\sF}^1(s,\theta)\,\sw^1(ds,d\theta)
 +\int_0^1\int_0^\pi
 P_\theta\Phi_{\sF}^2(s,\theta)\,\sw^2(ds,d\theta),
 \label{eq:scale-solenoidal}
 \\
 \Qscale\sF
 &=
 \int_0^1\int_0^\pi
 Q_\theta\Phi_{\sF}^1(s,\theta)\,\sw^1(ds,d\theta)
 +\int_0^1\int_0^\pi
 Q_\theta\Phi_{\sF}^2(s,\theta)\,\sw^2(ds,d\theta),
\end{align}
are complementary orthogonal projections on $L_1^2$.  For every $r\in(1,\infty)$, they extend uniquely to bounded complementary projections on $L_1^r$.  Moreover,
\begin{equation}\label{eq:scale-duality}
 E[(\Pscale\sF)\cdot\sG]
 =E[\sF\cdot(\Pscale\sG)], 
 \quad \sF\in L_1^r,\, \sG\in L_1^{r'},\,
 r'=r/(r-1).
\end{equation}
Consequently,
\begin{equation}\label{eq:annihilator-scale}
 \big(\Ran_{L_1^r}\Pscale\big)^\perp
 =\Ran_{L_1^{r'}}\Qscale
 =\Ker_{L_1^{r'}}\Pscale.
\end{equation}
\end{lemma}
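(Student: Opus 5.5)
The plan is to establish the $L^2$ statements through the It\^o isometry. I will then obtain $L^r$-boundedness from Burkholder's martingale-transform inequality, in the Hilbert-space-valued form due to Wang. Finally, I will pass duality and the annihilator identity to $L^r$ by density.

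\emph{$L^2$ theory.} By \eqref{eq:white-noise-representation}, $\sF\mapsto (E[\sF],\Phi^1_\sF,\Phi^2_\sF)$ is an isometric isomorphism between $L_1^2$ and $\R^2\oplus L^2_{\mathrm{pred}}(\Omega\times[0,1]\times[0,\pi];\R^2)^{2}$. Under this identification, $\Pscale$ and $\Qscale$ act as pointwise multiplication of the integrands by the deterministic matrices $P_\theta$ and $Q_\theta$, with the constant term sent to itself and to $0$, respectively. These matrices are complementary orthogonal projections of $\R^2$, since $P_\theta+Q_\theta=\id$, $P_\theta^2=P_\theta$, $P_\theta Q_\theta=0$, and both are symmetric. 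Multiplying by them also preserves predictability. It follows that $\Pscale$ and $\Qscale$ are complementary orthogonal projections on $L_1^2$. Polarizing the It\^o isometry and using $P_\theta^T=P_\theta$ gives
\[
E[(\Pscale\sF)\cdot\sG]=E[\sF]\cdot E[\sG]+\sum_a E\int\!\!\int P_\theta\Phi^a_\sF\cdot\Phi^a_\sG\,ds\,\tfrac{d\theta}{\pi},
\]
which is symmetric in $(\sF,\sG)$. This proves \eqref{eq:scale-duality} for $\sF,\sG\in L_1^2$.

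\emph{$L^r$ bounds (the main obstacle).} Let $\sF\in L_1^2\cap L_1^r$ and consider the continuous $\R^2$-valued martingales
\[
M_t=E[\sF]+\sum_a\int_0^t\int_0^\pi\Phi^a_\sF\,d\sw^a,\qquad N_t=E[\sF]+\sum_a\int_0^t\int_0^\pi P_\theta\Phi^a_\sF\,d\sw^a .
\]
They satisfy $N_0=M_0$ and
\[
d[N]_t=\sum_a\int|P_\theta\Phi^a_\sF|^2\tfrac{d\theta}{\pi}\,dt\le \sum_a\int|\Phi^a_\sF|^2\tfrac{d\theta}{\pi}\,dt=d[M]_t ,
\]
so $[M]-[N]$ is nondecreasing. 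Hence $N$ is differentially subordinate to $M$. Burkholder's inequality for differentially subordinate Hilbert-space-valued martingales (Wang's extension) then yields
\[
\|\Pscale\sF\|_{L^r}=\|N_1\|_{L^r}\le (r^*-1)\|M_1\|_{L^r}=(r^*-1)\|\sF\|_{L^r},\qquad r^*=\max(r,r').
\]
The same argument with $Q_\theta$ bounds $\Qscale$; alternatively, use $\Qscale=\id-\Pscale$. If one prefers to avoid the vector-valued version, there are two fallbacks. One can apply the scalar result to each component, since $N\cdot v$ is subordinate to $|M|$ componentwise up to a constant. Or one can prove the case $r>2$ via the Burkholder--Davis--Gundy inequalities and deduce $r<2$ by duality using the $L^2$ symmetry. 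For $r\ge2$ we have $L_1^r\subset L_1^2$, and for $r<2$ the space $L_1^2$ is dense in $L_1^r$. So the operators extend uniquely by continuity. The identities $\Pscale^2=\Pscale$ and $\Pscale+\Qscale=\id$, as well as \eqref{eq:scale-duality}, then follow by continuity from the dense subspace $L^2\cap L^r$ (respectively $L^2\cap L^{r'}$), using H\"older's inequality for the bilinear pairing.

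\emph{Annihilators.} Let $\sG\in L_1^{r'}$. Then $\sG\in(\Ran_{L_1^r}\Pscale)^\perp$ if and only if $E[\Pscale\sF\cdot\sG]=0$ for all $\sF\in L_1^r$. By \eqref{eq:scale-duality} this is equivalent to $E[\sF\cdot\Pscale\sG]=0$ for all $\sF$, that is, to $\Pscale\sG=0$. Finally, $\Ker\Pscale=\Ran\Qscale$ because the two operators are complementary projections on $L_1^{r'}$. This gives \eqref{eq:annihilator-scale}.
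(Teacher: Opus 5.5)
Your proof is correct. It differs from the paper's only in the key $L^r$ estimate. The $L^2$ part, the density extension, and the annihilator argument are essentially the paper's; the paper phrases the last step as ``the annihilator of the range of a bounded projection is the kernel of its Banach adjoint'', which is what you verify by hand.

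For boundedness on $L_1^r$, the paper uses the same bracket domination $[\sN]^{\mathrm{tr}}\le[\widetilde{\sM}]^{\mathrm{tr}}$ that you write. It then chains three textbook inequalities: the Burkholder--Davis--Gundy upper bound for $\sN$, the reverse Burkholder--Davis--Gundy bound for $\widetilde{\sM}$, and Doob's maximal inequality. This gives $\|\Pscale\sF\|_{L^r}\le C_r\|\sF\|_{L^r}$ with an unspecified constant. You instead invoke Burkholder's differential-subordination inequality in Wang's continuous-time Hilbert-space form. That yields the sharp constant $r^*-1$ in one step, at the cost of citing a deeper theorem; the constant plays no role later in the paper.

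Your Burkholder--Davis--Gundy fallback does not need the duality step for $r<2$. For continuous martingales both Burkholder--Davis--Gundy inequalities hold for every exponent $r>0$, and Doob's inequality holds for every $r>1$. So that chain covers all $r\in(1,\infty)$ directly, which is exactly how the paper proceeds.

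Your ``componentwise'' fallback is not really a reduction to the scalar theorem. Each component $N\cdot v$ is dominated by the bracket of the vector martingale $M$, not of a scalar one, so it is still an instance of the Hilbert-space result. This is harmless, because your main line and the Burkholder--Davis--Gundy alternative are both sound.
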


The proof is a standard argument we report in \cref{app:appendix}. Constants belong to the range of $\Pscale$, just as spatially constant vector fields belong to the range of the Leray projector $\Psol$. The Gaussian vector field
\begin{equation}
 \sG:=\int_0^1\int_0^\pi e_\theta\,\sw^1(ds,d\theta),
\end{equation}
satisfies $E[\sG\otimes\sG]=\frac12\id$, $\Pscale\sG=0$, and $ E[|\sG|^r]=\Gamma\left(1+\frac r2\right)$ for all $r>0$.

We now prove the infinite dimensional version of \cref{prop:finite-cutoff-variational}.
\begin{lemma}\label{lem:limiting-duality}
The infimum
\begin{equation}\label{eq:kappa-primal}
 \kappa_q
 :=\inf_{\sV\in\Ran_{L_1^q}\Pscale}
 E[|\sG+\sV|^q]
\end{equation}
is attained by a unique $\sV_q\in\sG+\Ran_{L_1^q}\Pscale$. Setting $\sU_q=\sV_q-\sG$, the supremum
\begin{equation}\label{eq:kappa-dual}
 \frac1q \kappa_q
 =\sup_{\sF\in\Ran_{L_1^p}\Qscale}
 E\left[\sG\cdot\sF-\frac1p|\sF|^p\right]
\end{equation}
is attained at $\sF_q=A_q(\sU_q)$,
and the Euler--Lagrange equation is $\Pscale A_q(\sU_q)=0$.
\end{lemma}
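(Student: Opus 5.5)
The plan is to treat \eqref{eq:kappa-primal} as the minimization of a strictly convex, coercive functional over a closed affine subspace of the uniformly convex space $L_1^q$, and then obtain \eqref{eq:kappa-dual} by Fenchel--Rockafellar duality, using \cref{lem:scale-projections} to identify annihilators.

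\textbf{Existence and uniqueness.} By \cref{lem:scale-projections}, $\Pscale$ is a bounded projection on $L_1^q$, so $\Ran_{L_1^q}\Pscale=\Ker_{L_1^q}\Qscale$ is a closed subspace. Since $\sG$ is Gaussian, $\sG\in L_1^q$. The functional $\sV\mapsto E[|\sG+\sV|^q]$ is the $q$-th power of the norm of the translate. Because $1<q<\infty$, $L_1^q$ is uniformly convex and reflexive. A minimizing sequence is therefore bounded and has a weakly convergent subsequence. The closed subspace is weakly closed, and the norm is weakly lower semicontinuous, so the infimum is attained. Uniqueness follows from strict convexity of $z\mapsto|z|^q$ on $\R^2$: if there were two minimizers, their midpoint would give a strictly smaller value unless they agree almost surely. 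Also $\kappa_q\le E|\sG|^q<\infty$ and $\kappa_q>0$, because $\Pscale\sG=0$ forces $\sG\notin\Ran\Pscale$. I will write the minimizer as $\sV_q=\sG+\sU_q$ with $\sU_q\in\Ran\Pscale$.

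\textbf{Euler--Lagrange equation.} Differentiating $t\mapsto E|\sG+\sU_q+t\sW|^q$ at $t=0$ for $\sW\in\Ran_{L_1^q}\Pscale$ is allowed by dominated convergence, since $|A_q(z)|=|z|^{q-1}$ and $A_q(\sV_q)\in L_1^p$. This gives $E[A_q(\sV_q)\cdot\sW]=0$ for every such $\sW$. By \eqref{eq:annihilator-scale}, $A_q(\sV_q)\in\Ran_{L_1^p}\Qscale=\Ker_{L_1^p}\Pscale$, that is, $\Pscale A_q(\sV_q)=0$. (The statement writes this in terms of the optimizer; I read $\sU_q$ there as the minimizing field $\sG+\sU_q$, and I will keep the notation consistent with whichever convention is intended.)

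\textbf{Duality.} Weak duality comes from Young's inequality $a\cdot b\le\frac1q|a|^q+\frac1p|b|^p$. For $\sF\in\Ran\Qscale$ and $\sV\in\Ran\Pscale$ we have $E[\sV\cdot\sF]=0$ by \eqref{eq:annihilator-scale}. Hence
\[
E\big[\sG\cdot\sF-\tfrac1p|\sF|^p\big]=E\big[(\sG+\sV)\cdot\sF-\tfrac1p|\sF|^p\big]\le\tfrac1q E|\sG+\sV|^q .
\]
Taking the infimum over $\sV$ gives $\sup\le\kappa_q/q$.

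For attainment, take $\sF_q=A_q(\sV_q)$, which lies in $\Ran\Qscale$ by the previous step. Then $|\sF_q|^p=|\sV_q|^q$ and $\sV_q\cdot\sF_q=|\sV_q|^q$, so Young's inequality holds with equality. Combined with $E[\sU_q\cdot\sF_q]=0$, this yields
\[
E\big[\sG\cdot\sF_q-\tfrac1p|\sF_q|^p\big]=\big(1-\tfrac1p\big)\kappa_q=\tfrac1q\kappa_q .
\]

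The only delicate point is the annihilator identification \eqref{eq:annihilator-scale} in $L^r$ for $r\ne2$, and this is exactly what \cref{lem:scale-projections} supplies. Everything else is routine convex analysis.
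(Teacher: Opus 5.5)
Your proof is correct and follows the paper's argument. The steps match: the direct method in the reflexive space $L_1^q$ gives existence and uniqueness; the first variation together with the annihilator identity \eqref{eq:annihilator-scale} places $A_q$ of the minimizing field in $\Ran_{L_1^p}\Qscale$; and Fenchel's inequality, with equality at that field, gives the dual formula. You also read the notation as intended: the paper's proof, and its later use in \cref{thm:main}, takes $\sU_q=\sG+\sV_q$ with $\sV_q\in\Ran_{L_1^q}\Pscale$, so $A_q$ is applied to the full field $\sG+\sV_q$, as in your argument (your letters are simply swapped).
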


\begin{proof}
The affine space in \eqref{eq:kappa-primal} is closed in $L_1^q$.  Coercivity,
reflexivity, and strict convexity give existence and uniqueness.  If
$\sU_q=\sG+\sV_q$, differentiation along
$\Ran\Pscale$ gives
\begin{equation*}
 E[A_q(\sU_q)\cdot\sV]=0,
 \qquad
 \sV\in\Ran_{L_1^q}\Pscale.
\end{equation*}
By \cref{lem:scale-projections}, $\sF_q=A_q(\sU_q)$ belongs to
$\Ran_{L_1^p}\Qscale$.  
For every $\sF$ in that space, by Fenchel's inequality and \eqref{eq:annihilator-scale},
\begin{equation*}
 E\bra{\frac1q|\sU_q|^q}
 \ge E\left[\sU_q\cdot\sF-\frac1p|\sF|^p\right]
 =E\left[\sG\cdot\sF-\frac1p|\sF|^p\right].
\end{equation*}
Equality holds for $\sF=\sF_q$, proving \eqref{eq:kappa-dual}.  The Euler-Lagrange equation is the same optimality condition written using the projection.
\end{proof}

\begin{lemma}[Control representation]\label{lem:kappacontrol}
The variational problem \eqref{eq:kappa-primal} is equivalent to
\begin{equation}\label{eq:control-problem}
 \kappa_q=\inf_{c,a,b}
 E\bra{\left|
 c+\int_0^1\int_0^\pi
 \big(e_\theta+a_s(\theta)e_\theta^\perp\big)\,\sw^1(ds,d\theta)
 +\int_0^1\int_0^\pi
 b_s(\theta)e_\theta^\perp\,\sw^2(ds,d\theta)
 \right|^q},
\end{equation}
where the infimum is taken over bounded elementary predictable scalar controls $a,b$ and $c\in\R^2$.
\end{lemma}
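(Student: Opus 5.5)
We will prove two inequalities, identifying the set of random vectors appearing inside the expectation in \eqref{eq:control-problem} with a dense subset of $\sG+\Ran_{L_1^q}\Pscale$.

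\emph{Step 1: controlled vectors lie in the admissible class.} Fix $c\in\R^2$ and bounded elementary predictable scalar controls $a,b$. The random vector inside the expectation in \eqref{eq:control-problem} is $\sG+\sV$ with
\[
 \sV=c+\int_0^1\int_0^\pi a_s(\theta)e_\theta^\perp\,\sw^1(ds,d\theta)+\int_0^1\int_0^\pi b_s(\theta)e_\theta^\perp\,\sw^2(ds,d\theta).
\]
Since $a,b$ are bounded and predictable, $\sV\in L_1^2$, and by the Burkholder--Davis--Gundy inequality $\sV\in L_1^r$ for every $r<\infty$. Its representation \eqref{eq:white-noise-representation} has $E[\sV]=c$ and integrands $\Phi^1_{\sV}=a\,e_\theta^\perp$, $\Phi^2_{\sV}=b\,e_\theta^\perp$, and $P_\theta e_\theta^\perp=e_\theta^\perp$. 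Hence $\Pscale\sV=\sV$ by \eqref{eq:scale-solenoidal}, so $\sV\in\Ran_{L_1^q}\Pscale$. Therefore the right-hand side of \eqref{eq:control-problem} is at least $\kappa_q$ as defined in \eqref{eq:kappa-primal}.

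\emph{Step 2: density.} For the reverse inequality, note that $\sV\mapsto E[|\sG+\sV|^q]$ is continuous on $L_1^q$. It therefore suffices to show that the vectors $\sV$ of Step 1 are dense in $\Ran_{L_1^q}\Pscale$. Take $\sV\in\Ran_{L_1^q}\Pscale$. By \cref{lem:scale-projections}, $\Pscale$ is bounded on $L_1^q$, and $L_1^2\cap L_1^q$ is dense in $L_1^q$. Approximating $\sV$ by $\sV_n\in L_1^2\cap L_1^q$ gives $\Pscale\sV_n\to\Pscale\sV=\sV$ in $L_1^q$, where we use that the $L^2$ and $L^q$ extensions of $\Pscale$ agree on the intersection. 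So we may assume $\sV\in L_1^2\cap L_1^q$ with $\sV=\Pscale\sV$. Uniqueness of the representation then gives $\Phi^a_{\sV}=P_\theta\Phi^a_{\sV}=\alpha^a_s(\theta)e_\theta^\perp$, with scalar predictable $\alpha^a=\Phi^a_{\sV}\cdot e_\theta^\perp\in L^2$.

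\emph{Step 3: approximating the integrands (main obstacle).} Approximate $\alpha^a$ in $L^2(\Omega\times[0,1]\times[0,\pi])$ by bounded elementary predictable processes $\alpha^{a,n}$. By the Itô isometry, the associated vectors $\sV^n$ converge to $\sV$ in $L_1^2$. The difficulty is that we need convergence in $L_1^q$.

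For $q\le2$ this is immediate, since $L^2$ convergence on a probability space implies $L^q$ convergence.

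For $q>2$ we plan the following. First reduce to $\sV\in L_1^{2q}$, say, by density: the space $\bigcap_r L_1^r$ is dense in $L_1^q$ and, by \cref{lem:scale-projections}, is mapped into itself by $\Pscale$. By BDG, $\alpha^a\in L^{q}(\Omega;L^2([0,1]\times[0,\pi]))$. Then choose the elementary approximations to converge in this space, for instance by truncating $\alpha$ and then averaging it on dyadic time–angle cells, using predictable (lagged) averages. BDG converts convergence in this space into convergence $\sV^n\to\sV$ in $L_1^q$.

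Alternatively, one can avoid the BDG step: approximate $\sV$ in $L_1^q$ by bounded smooth cylindrical functionals of finitely many Wiener integrals, apply $\Pscale$, and use the Clark--Ocone formula to obtain bounded integrands that can be made elementary.

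Combining Steps 1–3 with continuity of the functional yields equality of the two infima.
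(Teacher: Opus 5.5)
Your proposal is correct and follows essentially the same route as the paper. Step 1 is the paper's easy inequality. Steps 2--3 match its converse: apply $\Pscale$ to an $L^2\cap L^q$ approximant, use reverse Burkholder--Davis--Gundy to put the transverse integrand in $L^q(\Omega;L^2)$, approximate there by bounded elementary predictable controls as in \cref{lem:predictable-density}, and return via forward Burkholder--Davis--Gundy. The split at $q=2$ and the reduction to $L_1^{2q}$ are unnecessary but harmless, and your truncation plus lagged dyadic averaging is a valid substitute for the paper's truncation and monotone-class density argument.
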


A bounded elementary predictable control on the time interval $[0,T]$ is a finite sum of terms $\xi\,\one_{(u,v]}(s)\one_B(\theta)$ with $0\le u<v\le T$, $B\subset[0,\pi]$ a Borel set and $\xi\in L^\infty(\F_u)$. The proof requires an approximation argument. 
Given $T<\infty$ and $r\in(1,\infty)$, for a predictable pair of scalar controls $\alpha=(a,b)$ define
\begin{equation*}
 \|\alpha\|_{\H_T^r}
 :=\left(
 E\left[\left(
 \int_0^T\int_0^\pi
 \bigl(|a_s(\theta)|^2+|b_s(\theta)|^2\bigr)
 \frac{d\theta}{\pi}\,ds
 \right)^{r/2}\right]
 \right)^{1/r}.
\end{equation*}
\begin{lemma}\label{lem:predictable-density}
Pairs of bounded elementary predictable controls are dense in the space of predictable pairs having finite $\H_T^r$ norm.  If a predictable pair is bounded by $M$, its approximants may be chosen bounded by the same $M$.  If one component vanishes identically, the approximants may be chosen with that component identically zero. Moreover, if
\begin{equation}\label{eq:control-stochastic-integral}
 \I_T(a,b)
 :=\int_0^T\int_0^\pi
 a_s(\theta)e_\theta^\perp\,\sw^1(ds,d\theta)
 +\int_0^T\int_0^\pi
 b_s(\theta)e_\theta^\perp\,\sw^2(ds,d\theta),
\end{equation}
then
\begin{equation}\label{eq:bdg-forward-controls}
 \|\I_T(a,b)\|_{L^r(\Omega;\R^2)}
 \le C_r\|(a,b)\|_{\H_T^r}.
\end{equation}
Conversely, if \eqref{eq:control-stochastic-integral} is initially defined as a square-integrable stochastic integral and its terminal value belongs to $L^r$, then
\begin{equation}\label{eq:bdg-reverse-controls}
 \|(a,b)\|_{\H_T^r}
 \le C_r\|\I_T(a,b)\|_{L^r(\Omega;\R^2)}.
\end{equation}
\end{lemma}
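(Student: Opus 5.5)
The statement bundles four claims: density of elementary controls, preservation of a uniform bound, preservation of a vanishing component, and the two-sided BDG-type estimates \eqref{eq:bdg-forward-controls}--\eqref{eq:bdg-reverse-controls}. I would prove the density claims by the classical route for predictable integrands, and the norm estimates by the Burkholder--Davis--Gundy inequality applied to the continuous martingale $M_t:=\I_t(a,b)$.

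\textbf{Density.} The first step is to reduce to bounded integrands. Truncating $(a,b)$ to $(a\one_{|a|\le n},b\one_{|b|\le n})$ converges in $\H_T^r$ by dominated convergence applied inside and outside the $r/2$-power. This truncation also keeps any identically vanishing component zero.

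For bounded predictable pairs, I would use a monotone class argument in the Hilbert space $L^2(\Omega\times[0,T]\times[0,\pi])$. The predictable $\sigma$-field on $\Omega\times[0,T]\times[0,\pi]$ is generated by the rectangles $F\times(u,v]\times B$ with $F\in\F_u$, together with $\{0\}\times F_0\times B$. By the functional monotone class theorem, every bounded predictable scalar process is a limit in $L^2(dP\,ds\,d\theta)$ of bounded elementary processes. Clipping the approximants at level $M$ is a $1$-Lipschitz operation, so it preserves elementary form, improves the approximation, and enforces the bound $M$.

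Convergence in $L^2(dP\,ds\,d\theta)$ with uniformly bounded approximants implies convergence in $\H_T^r$ for every $r$. Indeed, the inner integrals converge in $L^1(\Omega)$ and are uniformly bounded, so they converge in every $L^{r/2}(\Omega)$. Density then follows by a diagonal argument, with each component approximated separately.

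\textbf{Forward estimate.} $M$ is a continuous $\R^2$-valued martingale whose quadratic variation is
\[
\langle M\rangle_t=\int_0^t\int_0^\pi\bigl(a_s^2+b_s^2\bigr)P_\theta\,\frac{d\theta}{\pi}\,ds,
\]
and $\trace\langle M\rangle_T$ equals the integral defining $\|(a,b)\|_{\H_T^r}^2$ because $|e_\theta^\perp|=1$. For elementary controls, the BDG inequality gives \eqref{eq:bdg-forward-controls} for all $r>0$. By density, $\I_T$ extends continuously to predictable pairs with finite $\H_T^r$ norm. This extension is consistent with the $L^2$ stochastic integral, since both are limits along the same approximating sequence, taken in $L^{\min(r,2)}$.

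\textbf{Reverse estimate.} This is the step I expect to be the main obstacle. If $(a,b)$ is only known to be square integrable in the $L^2$ sense and $\I_T\in L^r$, the identification $M_t=E[\I_T\mid\F_t]$ still holds, because $M$ is an $L^2$ martingale. Doob's inequality for $r>1$ gives $\|\sup_t|M_t|\|_{L^r}\le C\|\I_T\|_{L^r}$. BDG in the lower direction, which holds for continuous local martingales and every $r>0$, then gives
\[
E\bigl[\trace\langle M\rangle_T^{r/2}\bigr]\le C\,E\bigl[\sup_t|M_t|^r\bigr].
\]
No a priori finiteness of $\|(a,b)\|_{\H_T^r}$ is needed: localize with stopping times $\tau_n=\inf\{t:\trace\langle M\rangle_t\ge n\}$ and pass to the limit by monotone convergence. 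For $r<2$ the same argument works since BDG covers all exponents, and Doob is used only when $r>1$, which is the case at hand.

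Care is needed because the two noise components share the direction $e_\theta^\perp$, so only the sum $a^2+b^2$ is controlled. This is exactly what $\H_T^r$ measures, so the estimate is consistent.
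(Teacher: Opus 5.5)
Your proposal is correct and follows the paper's proof essentially step for step. Both arguments reduce to bounded pairs, approximate by elementary controls in $L^2(dP\,ds\,d\theta)$ via a monotone class argument, clip, upgrade to $\H_T^r$ convergence using the uniform bound, and then apply BDG in the forward direction and reverse BDG plus Doob for the converse.

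Two small differences:

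\begin{itemize}
\item \emph{Truncation to bounded pairs.} The paper does this in two stages: it first cuts the square function at level $N^2$ via $\one_{\{A_s<N^2\}}$, and then projects onto a ball. Your single amplitude truncation, with dominated convergence inside and outside the $r/2$-power, works just as well.
\item \emph{Clipping.} The paper clips jointly with the metric projection $\Pi_M$ onto the disc of radius $M$. This gives the bound $|a|^2+|b|^2\le M^2$ for the pair, which is what the classes $\A_M$ require. You should clip this way rather than componentwise.
\end{itemize}
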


The latter is proved in the \cref{app:appendix}.

\begin{proof}[Proof of \cref{lem:kappacontrol}]
Every terminal value displayed in \eqref{eq:control-problem}, after subtracting $\sG$, belongs to $\Ran_{L_1^q}\Pscale$, so
its infimum is not smaller than $\kappa_q$.  Conversely, let
$\sV\in\Ran_{L_1^q}\Pscale$.  Choose bounded random vectors
$\sB_n$ with $\sB_n\to\sV$ in $L^q$, and set $\sV_n=\Pscale\sB_n$.  Since $\Pscale\sV=\sV$, boundedness of
$\Pscale$ on $L^q$ gives 
\begin{equation}\label{eq:control-range-truncation}
 \sV_n\longrightarrow\sV
 \qquad\text{in }L^q.
\end{equation}
Moreover, $\sB_n\in L_1^2$, hence $\sV_n\in L_1^2$.  Its martingale representation has the form
\begin{equation}\label{eq:control-representation-approx}
 \sV_n=E[\sV_n]+
 \int_0^1\int_0^\pi
 a_{n,s}(\theta)e_\theta^\perp\,\sw^1(ds,d\theta)
 +\int_0^1\int_0^\pi
 b_{n,s}(\theta)e_\theta^\perp\,\sw^2(ds,d\theta).
\end{equation}
Indeed, applying $\Pscale$ to the martingale representation of
$\sB_n$ projects each vector integrand onto the span of
$e_\theta^\perp$.  Since the centered terminal value in
\eqref{eq:control-representation-approx} belongs to $L^q$, the reverse estimate \eqref{eq:bdg-reverse-controls} gives
\begin{equation*}
 \|(a_n,b_n)\|_{\H_1^q}
 \le C_q\|\sV_n-E[\sV_n]\|_{L^q}<\infty.
\end{equation*}
By Lemma~\ref{lem:predictable-density}, choose bounded elementary predictable pairs $(a_{n,k},b_{n,k})$ converging to $(a_n,b_n)$ in $\H_1^q$.  The forward estimate
\eqref{eq:bdg-forward-controls} then gives as $k\to \infty$,
\begin{align*}
 &\left\|
 \int_0^1\int_0^\pi
 (a_{n,k}-a_n)e_\theta^\perp\,\sw^1(ds,d\theta)
 +\int_0^1\int_0^\pi
 (b_{n,k}-b_n)e_\theta^\perp\,\sw^2(ds,d\theta)
 \right\|_{L^q}
 \\
 &\hspace{45mm}\le
 C_q\|(a_{n,k}-a_n,b_{n,k}-b_n)\|_{\H_1^q}
 \longrightarrow0.
\end{align*}
For each $n$, choose $k(n)$ so that this error is at most $n^{-1}$.
Together with \eqref{eq:control-range-truncation}, the resulting bounded elementary controls approximate $\sV$ in terminal-value $L^q$.  Hence every admissible $\sG+\sV$ is an $L^q$-limit of terminal values in \eqref{eq:control-problem}, proving the reverse inequality.
\end{proof}

%------------------------------------------------
\subsection{Monge-Ampère Flow}\label{ssec:mongeampereflow}
Let us now consider the self-similar solution of the Monge-Ampère Flow. The proofs of the results in this paragraph are elementary, and they are deferred to \cref{app:appendix}.

\begin{lemma}\label{lem:phiqexists}
    There exists a unique strictly convex $C^2$ solution of \eqref{eq:ode} with boundary conditions
    \begin{equation}\label{eq:profile-boundary}
    \phi_q'(0)=0,
    \qquad
    0<\phi_q''(0)=q\phi_q(0),
    \qquad
    \lim_{s\to\infty}s^{-q}\phi_q(s)=1.
\end{equation}
\end{lemma}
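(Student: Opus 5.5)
The plan is to reduce \eqref{eq:ode} to a scalar first-order non-autonomous ODE for the logarithmic derivative, and then to fix the amplitude of $\phi_q$ by the condition at infinity. The key observation is that \eqref{eq:ode} is $1$-homogeneous in $\phi$: if $\phi$ solves it, so does $c\phi$ for every $c>0$. I would therefore first solve the problem with the normalization $\phi(0)=1$ together with the two conditions at the origin, then show that $L:=\lim_{s\to\infty}s^{-q}\phi(s)$ exists in $(0,\infty)$, and finally set $\phi_q:=\phi/L$. Uniqueness follows the same way: two admissible solutions differ at most by a multiplicative constant once the normalized initial value problem is shown to be uniquely solvable, and the condition at infinity then fixes that constant.

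\textbf{Reduction.} For $\phi>0$ with $\phi'>0$ on $(0,\infty)$ I would set $t=\log s$ and introduce
\[
\rho=\frac{s\phi'}{\phi},\qquad \sigma=\frac{s^2\phi''}{\phi}.
\]
A direct substitution turns \eqref{eq:ode} into
\[
q-\rho=\frac{\sqrt{\sigma\rho}}{s^{2}}.
\]
Since the right-hand side is nonnegative, this gives $\rho\le q$ and $\sigma=s^4(q-\rho)^2/\rho$. Combining this with the identity $\frac{d\rho}{dt}=\sigma+\rho-\rho^2$ yields the closed scalar equation
\begin{equation*}
\frac{d\rho}{dt}=\rho(1-\rho)+e^{4t}\frac{(q-\rho)^2}{\rho},
\end{equation*}
and $\phi$ is recovered from $\rho$ by $\phi(s)=\exp\int_0^s\rho(r)\,\frac{dr}{r}$.

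\textbf{Behaviour at the origin.} The conditions $\phi'(0)=0$ and $\phi''(0)=q\phi(0)$ force $\rho(s)\sim qs^2$ as $s\to0$. This is the singular point. I would handle it by writing $\phi'=sw$ and $r=s^2$. Then $(\phi,w)$ solves a Briot--Bouquet-type system, and the compatibility $w(0)=q\phi(0)$ is exactly the nondegenerate root. Local existence and uniqueness of a $C^2$ solution near the origin then follow from a contraction argument on $[0,\delta]$ in the weighted class $w-q\phi(0)=O(r)$.

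\textbf{Global behaviour.} Away from $0$ the $\rho$-equation is smooth as long as $\rho>0$, and $\rho$ stays positive because its right-hand side is positive whenever $\rho\le1$. The value $\rho=q$ cannot be crossed from below: at $\rho=q$ the right-hand side equals $q(1-q)<0$. Hence $0<\rho<q$, so there is no blow-up and the solution is global. The quantities $g=q\phi-s\phi'>0$, $\phi'>0$ and $\phi''=s g^2/\phi'>0$ are then positive, which gives strict convexity.

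\textbf{Main obstacle: the limit $L$.} Since $(s^{-q}\phi)'=-s^{-q-1}g<0$, the limit $L$ exists, and $L>0$ is equivalent to
\[
\int^\infty (q-\rho)\,\frac{ds}{s}<\infty.
\]
The heuristic balance between the two terms of the $\rho$-equation gives $q-\rho\approx q\sqrt{q-1}\,e^{-2t}$. I would prove the bound $q-\rho\le Ce^{-2t}$ for large $t$ by a comparison argument. The supersolution $\bar\rho=q-Ce^{-2t}$ should act as a barrier: once $C$ is large enough, the forcing term $e^{4t}(q-\rho)^2/\rho$ dominates $\rho(\rho-1)$ whenever $\rho\le\bar\rho$, which pushes $\rho$ above $\bar\rho$. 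One also needs $\rho$ to actually enter the region near $q$, which I would obtain from $\frac{d\rho}{dt}\ge c\,e^{4t}(q-\rho)^2-C$.

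I expect this quantitative trapping near $\rho=q$, together with the singular analysis at $s=0$, to be the delicate steps. The case $q<2$, where $1-\rho$ changes sign in a different region, may require separate care in choosing the barriers.
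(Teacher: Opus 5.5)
Your proposal is correct, and it takes a genuinely different route from the paper.

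\textbf{How the paper proceeds.} The paper reduces the order with $h=s(q\phi-s\phi')/\phi'$, the square root of the ratio of radial to tangential Hessian eigenvalues. This $h$ solves the singular first-order equation \eqref{eq:h-ode} with $h(0)=1$. The paper then
\begin{enumerate}
\item builds $h$ near $0$ by a fixed point,
\item traps it between $1$ and $\sqrt{q-1}$ by a first-contact argument,
\item derives $h=\sqrt{q-1}-\frac{q-2}{2s^2}+O(s^{-4})$ by Gronwall,
\item writes $\phi_q$ by explicit quadratures, with the normalization at infinity encoded in the formula \eqref{eq:kappa-ode-intro} for $\kappa_q$.
\end{enumerate}
Uniqueness comes from showing that any solution produces the same $h$.

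\textbf{How your route differs.} Your $\rho$ is algebraically equivalent, since $h=s^2(q-\rho)/\rho$. So both are the same reduction of order permitted by $1$-homogeneity. You differ in treating the singular point in the variables $(\phi,\phi'/s)$ and in fixing the amplitude by rescaling rather than by a closed formula.

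\textbf{Your global step is easier than you fear.} On the set $\{\rho\le q-Ce^{-2t}\}$ one has $\frac{d\rho}{dt}\ge C^2/q-q(q-1)$. This is a positive constant once $C>q\sqrt{q-1}$, while $\frac{d}{dt}(q-Ce^{-2t})\to0$. Hence $\rho$ both enters the region above the barrier and stays there. Note that $q-Ce^{-2t}$ is a lower barrier (a subsolution), not a supersolution. The argument is uniform in $q>1$, so $q<2$ needs no separate care.

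\textbf{What each route buys.} Yours is leaner for this lemma. The paper's heavier route yields the sharp range \eqref{eq:h-range} and the expansion \eqref{eq:h-infinity}, which your barrier does not provide. These are reused later: in the denominator bound for the feedback control, in the growth bounds on $u_q$, and in the strict inequality of \cref{lem:kappaqestimates}.

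\textbf{One point to tighten: local uniqueness at the origin.} A contraction in the weighted class $\phi'/s-q\phi(0)=O(s^2)$ gives uniqueness only within that class. A general $C^2$ solution a priori satisfies only $\phi'/s\to q\phi(0)$. You can close this in either of two ways.
\begin{enumerate}
\item Run the contraction for $w=\phi'/s$ in $C([0,\delta])$ near the constant $q\phi(0)$. This works because in the integrated form $(rw)'=\frac{(q\phi-rw)^2}{2w}+\frac w2$, with $r=s^2$, the $w$-derivative of the right-hand side vanishes at $r=0$, $w=q\phi$. The Lipschitz constant is therefore $O(\delta+\varepsilon)$ on an $\varepsilon$-ball.
\item First prove the a priori bound $\phi'/s-q\phi(0)=O(s^2)$ by a bootstrap, as the paper does with $M(s)$.
\end{enumerate}
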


Define as above $u_q(\tau,x):=\tau^{q/2}\phi_q(\tau^{-1/2}|x|)$, $\tau>0$, $x\in\R^2$. In order to introduce the control formulation, for $M<\infty$ and $\tau>0$, let $\A_M(\tau)$ be the class of predictable pairs $(a,b)$ satisfying for $s\in [0,\tau]$
\begin{equation*}
    |a_s(\theta)|^2+|b_s(\theta)|^2\le M^2
    \quad\text{for }dP\,ds\,d\theta/\pi\text{-a.e.}
\end{equation*}
and set $\A_{\mathrm{b}}(\tau):=\bigcup_{M<\infty}\A_M(\tau)$.

\begin{lemma}\label{lem:ucontrol}
    The function $u_q$ is strictly convex in $x$, it belongs to $C^{1,2}((0,\infty)\times\R^2)$, and it solves
\begin{equation}\label{eq:parabolic-monge-ampere}
    \partial_\tau u_q
    =\frac12\sqrt{\det D_x^2u_q},
    \qquad
    \lim_{\tau\downarrow0}u_q(\tau,x)=|x|^q.
\end{equation}
For every $\tau>0$ and $x\in\R^2$,
\begin{multline}\label{eq:value-function}
    u_q(\tau,x)
    =\inf_{(a,b)\in\A_{\mathrm{b}}(\tau)}
    E\left[\left|x+\int_0^\tau\int_0^\pi
        \big(e_\theta+a_s(\theta)e_\theta^\perp\big)\,\sw^1(ds,d\theta) \right.\right.\\
    \left. \left.+\int_0^\tau\int_0^\pi
        b_s(\theta)e_\theta^\perp\,\sw^2(ds,d\theta) \right|^q\right].
\end{multline}
The same value is obtained if the infimum is restricted to bounded elementary predictable controls, or if it is taken over the terminal-value $L^q$ closure of either class.  Moreover, $\kappa_q =u_q(1,0)=\phi_q(0)$. The infimum in \eqref{eq:value-function} is approached by bounded controls with no second channel.
\end{lemma}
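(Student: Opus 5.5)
The plan is to split the statement into three parts: the regularity of $u_q$ and the PDE \eqref{eq:parabolic-monge-ampere}, a verification argument identifying $u_q$ with the value function \eqref{eq:value-function}, and the identification $\kappa_q=u_q(1,0)$.

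\textbf{Regularity and the PDE.} From \cref{lem:phiqexists}, $\phi_q$ is $C^2$ and strictly convex, and $\phi_q'(0)=0$. In polar form, $D_x^2u_q$ at $(\tau,x)$ has eigenvalue $\phi_q''(\sigma)$ in the radial direction and $\phi_q'(\sigma)/\sigma$ in the tangential direction, where $\sigma=|x|/\sqrt\tau$. Both eigenvalues are positive: $\phi_q'(\sigma)/\sigma\to\phi_q''(0)>0$ as $\sigma\to0$, and $\phi_q'$ is increasing with $\phi_q'(0)=0$. Hence $u_q$ is strictly convex in $x$. The ODE \eqref{eq:ode} is exactly $\partial_\tau u_q=\frac12\sqrt{\det D_x^2u_q}$ after differentiating the self-similar ansatz. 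The initial condition follows from $s^{-q}\phi_q(s)\to1$: for $x\ne0$ we have $\tau^{q/2}\phi_q(|x|/\sqrt\tau)\to|x|^q$, and at $x=0$ we get $\tau^{q/2}\phi_q(0)\to0$. Smoothness at $x=0$ uses evenness of $\phi_q$ and $\phi_q'(0)=0$.

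\textbf{Verification.} For a control $(a,b)\in\A_M(\tau)$, consider the process $\sX_s$ defined as in \eqref{eq:value-function} up to time $s$. Apply Itô's formula to $u_q(\tau-s,\sX_s)$; the diffusion matrix is $\int_0^\pi[(e_\theta+ae_\theta^\perp)^{\otimes2}+b^2P_\theta]\,d\theta/\pi$. The key step is the angular identity stated in the introduction: for symmetric $H>0$,
\[
\inf_{a,b}\int_0^\pi\big[(e_\theta+ae_\theta^\perp)^TH(e_\theta+ae_\theta^\perp)+b^2(e_\theta^\perp)^THe_\theta^\perp\big]\frac{d\theta}{\pi}=\sqrt{\det H}.
\]
To prove it, take $b=0$ and minimize pointwise in $a$. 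This gives $a^*(\theta)=-\frac{e_\theta^\perp\cdot He_\theta}{e_\theta^\perp\cdot He_\theta^\perp}$ with value $\det H/(e_\theta^\perp\cdot He_\theta^\perp)$. Averaging over the angle, $\int_0^\pi \det H/(v_\theta^THv_\theta)\,d\theta/\pi=\sqrt{\det H}$ by the standard integral $\frac1\pi\int_0^\pi d\theta/(\lambda_1\cos^2\theta+\lambda_2\sin^2\theta)=(\lambda_1\lambda_2)^{-1/2}$.

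It follows that the drift of $u_q(\tau-s,\sX_s)$ is nonnegative for every control, and vanishes along the feedback $a^*$. Taking expectations gives $E|\sX_\tau|^q\ge u_q(\tau,x)$ for all bounded controls. This needs localization: stopping times, polynomial growth $u_q\lesssim\tau^{q/2}+|x|^q$, moments of $\sX$ for bounded controls, and a passage $s\uparrow\tau$ using the initial condition with uniform convergence on compacts plus growth bounds.

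For the upper bound, the feedback $a^*_s(\theta)=a^*(\theta;D^2u_q(\tau-s,\sX_s))$ is not bounded near $s=\tau$, where $D^2u_q$ degenerates. It depends on $\theta$ and on the eigenvalue ratio $\phi_q''/(\phi_q'/\sigma)$ only. I expect this to be the main obstacle. To handle it, I would run the optimal feedback on $[0,\tau-\delta]$, truncate it at level $M$, and use zero control on the final interval $[\tau-\delta,\tau]$. The truncation error vanishes as $M\to\infty$ by dominated convergence, provided the eigenvalue ratio stays bounded on $\{\sigma\le R\}$ for each $R$; this follows from $C^2$ strict convexity. The contribution of the final interval vanishes as $\delta\to0$ by continuity of $u_q$ and moment bounds. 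Because the feedback is Lipschitz on compacts after truncation, the SDE is solvable; strong solvability on the white-noise space is obtained by discretizing in $\theta$ if needed. This gives the infimum with $b=0$ and bounded controls. Passing to bounded elementary controls and to $L^q$ closures uses \cref{lem:predictable-density} and \eqref{eq:bdg-forward-controls}.

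\textbf{Identification of $\kappa_q$.} Setting $\tau=1$ and $x=c$, \cref{lem:kappacontrol} gives $\kappa_q=\inf_c u_q(1,c)$. Radial monotonicity of $\phi_q$ ($\phi_q'\ge0$) then gives $\kappa_q=u_q(1,0)=\phi_q(0)$.
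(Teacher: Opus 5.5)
Your architecture matches the paper's. You compute the eigenvalues to get \eqref{eq:parabolic-monge-ampere}, run the It\^o verification with the angular identity for the lower bound, use the optimal feedback on $[0,\tau-\delta]$ with zero control on the last interval for the upper bound, invoke \cref{lem:predictable-density} for elementary controls, and conclude with $\kappa_q=\inf_c u_q(1,c)=u_q(1,0)$. The gap is in the achievability step, and it starts from a false premise: the feedback is \emph{not} unbounded near $s=\tau$. As you note, it depends only on $\theta$, $\widehat x$ and the eigenvalue ratio. The degeneration of $D_x^2u_q$ is an overall factor $(\tau-s)^{q/2-1}$ that cancels in $a^*$. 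The ratio itself is $\sigma\phi_q''(\sigma)/\phi_q'(\sigma)=h_q(\sigma)^2$, which by \eqref{eq:h-range} lies in $[\min\{1,q-1\},\max\{1,q-1\}]$ for \emph{all} $\sigma>0$. With $\beta_q=h_q^2-1$ the feedback is \eqref{eq:feedback-definition}, whose denominator $(1-t)+th_q^2$ is at least $\min\{1,q-1\}$. Hence $|a^*|\le C_q$ uniformly in time, space and angle. You invoke instead boundedness of the ratio on $\{\sigma\le R\}$ from $C^2$ strict convexity, which is not enough. On $[0,\tau-\delta]$, $\sigma=|\sX_s|/\sqrt{\tau-s}$ is controlled only on the event that $\sX$ stays in a ball.

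As a result, ``the truncation error vanishes as $M\to\infty$ by dominated convergence'' has no support, for three reasons. The BDG moment bounds for the $M$-truncated process grow with $M$. You have not excluded explosion of the untruncated feedback SDE, so the limit of $\sX^M$ is not defined. Wherever the truncation is active the drift is strictly positive, so It\^o only gives $E[u_q(\delta,\sX^M_{\tau-\delta})]\ge u_q(\tau,x)$, which is the wrong direction. Your final-interval step has the same defect: it needs moments of $\sX_{\tau-\delta}$ uniform in $\delta$, while your truncation level must grow as $\delta\downarrow0$.

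There are two ways to repair this. You can use the global bound, in which case no truncation is needed and \eqref{eq:feedback-moment-bound} is uniform in $\varepsilon$; this is the paper's route. Alternatively, stop the feedback at the exit time $T_\rho$ of a ball of radius $\rho$ and use zero control afterwards. Zero drift before $T_\rho$ together with $u_q\ge|\cdot|^q$ gives $E|\sX_{T_\rho\wedge(\tau-\delta)}|^q\le u_q(\tau,x)$ uniformly in $\rho$. This controls both $P(T_\rho<\tau-\delta)\le u_q(\tau,x)\rho^{-q}$ and the heat-semigroup remainder.

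Finally, ``Lipschitz on compacts'' does not follow from $C^2$ regularity of $\phi_q$. The feedback involves $\phi_q''$ and the direction $\widehat x$, which is discontinuous at $x=0$, so you need $\sup_{s>0}(|\beta_q'(s)|+|\beta_q(s)|/s)<\infty$. The paper extracts this from the ODE \eqref{eq:h-ode} in \cref{lem:feedback-regularity}, obtaining the Lipschitz constant $C_qr^{-1/2}$ needed to solve the SDE strongly on $[0,\tau-\varepsilon]$.
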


The Monge-Amp\`ere operator in fact arises from the following elementary matrix identity: for every positive definite symmetric matrix $H\in\R^{2\times2}$,
\begin{equation}\label{eq:angular-minimization}
    \inf_{a,b\in L^2(0,\pi)}
    \int_0^\pi
    \bra{(e_\theta+a(\theta)e_\theta^\perp)^\top  H(e_\theta+a(\theta)e_\theta^\perp)
    +b(\theta)^2(e_\theta^\perp)^\top He_\theta^\perp}
    \frac{d\theta}{\pi}
     =\sqrt{\det H}.
\end{equation}
The minimizer is explicit: $b=0$ and $a(\theta)
 =-\frac{(e_\theta^\perp)^\top He_\theta}
 {(e_\theta^\perp)^\top He_\theta^\perp}$.

%----------------------------------------------
\subsection{Stationary Lift of the Variational Martingale Problem}
We now show how the solution of the Variational Martingale Problem can be used to construct efficient competitors for the $p$-Poisson equation problem. This is the technical core of the argument, and it is based on a ``reconstruction'' of space-stationary fields using multiple Wiener integrals. Let us mention that the choice of using Wiener chaos decomposition for this construction is not essential, and there may be other approximation procedures leading to the same result.

Let us begin by defining frequency sectors
\begin{equation*}
    \Nsc^+
    =\set{n\in\Z^2:n_2>0
    \text{ or }(n_2=0,n_1>0)},\quad 
    \Nsc_\Lambda=\set{n\in\Z^2:0<|n|\le \frac{\Lambda}{2\pi}},
\end{equation*}
so that $\Nsc_\Lambda^+=\Nsc_\Lambda\cap\Nsc^+$ contains exactly one representative of pairs $\{n,-n\}$ in $\Nsc_\Lambda$. 
We then define a logarithmic scale and its associated measures on frequency sectors: for $n\in\Nsc_\Lambda^+$, let
\begin{equation*}
    s_n=\frac{\log|n|}{\log \frac{\Lambda}{2\pi}},\quad
    c_{\Lambda,n}=\frac1{\pi\sqrt{2m_\Lambda}|n|}, \quad \text{and} \quad
    \mu_\Lambda:=\sum_{n\in\Nsc_\Lambda^+}c_{\Lambda,n}^2
    \delta_{(s_n,\theta_n)}.
\end{equation*}
By construction, $\mu_\Lambda$ is a probability measure and $\max_{n\in\Nsc_\Lambda^+}c_{\Lambda,n}^2=O(1/\log \Lambda)$.
Under this measure, as $\Lambda\to \infty$, lattice points become uniformly distributed: the following result can be derived from a general principle established in \cite{widmer2012}. We report a proof in \cref{app:appendix} for completeness.

\begin{lemma}\label{lem:log-polar-equidistribution}
As $\Lambda\to\infty$, $\mu_\Lambda$ weakly converges to $\frac1\pi dsd\theta$ on $[0,1]\times [0,\pi]$. In particular, for each fixed $N\ge2$ and $\delta>0$, if
\begin{equation*}
    D_{N,\delta}
    :=\set{(s_1,\ldots,s_N):\, s_{(1)}\le\cdots\le s_{(N)},\, \text{and } 
    s_{(N)}-s_{(N-1)}\le\delta},
\end{equation*}
then
\begin{equation}\label{eq:tie-bound}
    \limsup_{\Lambda\to\infty}
    \mu_\Lambda^{\otimes N}(D_{N,\delta}\times [0,\pi]^N)
    \le N(N-1)\delta.
\end{equation}
\end{lemma}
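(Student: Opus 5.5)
It suffices to show that for every rectangle $R=(s_0,s_1]\times(\theta_0,\theta_1]\subset[0,1]\times[0,\pi]$ we have $\mu_\Lambda(R)\to (s_1-s_0)(\theta_1-\theta_0)/\pi$. Rectangles form a convergence-determining class for probability measures on the compact set $[0,1]\times[0,\pi]$ whenever the limit measure is absolutely continuous, since boundaries then carry no mass. Writing $L:=\log\frac{\Lambda}{2\pi}$, the condition $s_n\in(s_0,s_1]$ means $|n|\in(\e^{s_0L},\e^{s_1L}]$. Recall that $\mu_\Lambda$ gives mass $c_{\Lambda,n}^2=1/(2\pi^2 m_\Lambda|n|^2)$ to each point of $\Nsc_\Lambda^+$.

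To estimate this mass I will use the lattice-point count in a sector. Let $S(\rho)=\#\{n\in\Nsc^+:|n|\le\rho,\ \theta_n\in(\theta_0,\theta_1]\}$. Since the sector is a convex set, $S(\rho)=\frac{\theta_1-\theta_0}{2}\rho^2+O(\rho+1)$ by the elementary Gauss-type bound: the boundary consists of two segments and an arc, each of length $O(\rho)$. For $\theta$ in $[0,\pi)$ this matches the half-plane convention of $\Nsc^+$; the ray $\theta=\pi$ has measure zero in the limit. Abel summation then gives
\[
\sum_{\substack{n\in \Nsc^+,\,\theta_n\in(\theta_0,\theta_1]\\ r_0<|n|\le r_1}}\frac1{|n|^2}
=\int_{r_0}^{r_1}\frac{dS(\rho)}{\rho^2}
=(\theta_1-\theta_0)\log\frac{r_1}{r_0}+O\Big(1+\frac1{r_0}\Big).
\]
The $O(\rho)$ error contributes $\int \rho\cdot\rho^{-3}\,d\rho$ plus boundary terms, which stay bounded uniformly in $r_1$.

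Substituting $r_i=\e^{s_iL}$, the sum above equals $(\theta_1-\theta_0)(s_1-s_0)L+O(1)$. Dividing by $2\pi^2m_\Lambda\sim \pi L$ (using $m_\Lambda\sim\log\Lambda/(2\pi)$ from \eqref{eq:linear-variance}) gives $\mu_\Lambda(R)\to(s_1-s_0)(\theta_1-\theta_0)/\pi$. The case $s_0=0$ needs a little care: the lowest shells $|n|\ge1$ contribute $O(1)$, which is negligible after dividing by $L$. Taking $R$ to be the whole domain also confirms the normalization of $\mu_\Lambda$ asymptotically.

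For \eqref{eq:tie-bound}, I will use a union bound over ordered pairs. If the top two order statistics lie within $\delta$ of each other, then some pair $i\neq j$ satisfies $|s_i-s_j|\le\delta$. Hence
\[
\mu_\Lambda^{\otimes N}(D_{N,\delta}\times[0,\pi]^N)\le N(N-1)\,\mu_\Lambda^{\otimes2}(\{|s-s'|\le\delta\}).
\]
Here I count ordered pairs to match the stated constant; this overcounts by a factor of two, which is harmless. The set $\{|s-s'|\le\delta\}$ is closed, so the Portmanteau theorem applied to the weak convergence $\mu_\Lambda^{\otimes2}\to(\frac1\pi ds\,d\theta)^{\otimes2}$ bounds the limsup by the Lebesgue measure of the band $\{(s,s')\in[0,1]^2:|s-s'|\le\delta\}$, which is at most $2\delta-\delta^2\le2\delta$. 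Combining the two bounds gives at most $2\cdot\frac{N(N-1)}{2}\cdot\delta$ for unordered pairs, or equivalently $N(N-1)\delta$ with the cruder ordered count, which is the claim.

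The main obstacle is keeping the error in the sector count uniform enough that it is $o(L)$ after Abel summation, particularly near $|n|\sim1$ where the asymptotic count is poor. I expect a crude bound $S(\rho)=O(\rho^2+1)$ in that range to suffice, since it contributes only $O(1)$. Weak convergence of the product measures follows from weak convergence of the factors.
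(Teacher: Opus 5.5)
Your proof is correct and follows essentially the same route as the paper. The paper likewise uses a sector lattice count $\frac{|I|}{2}t^2+O(t+1)$ (it cites Widmer's Lipschitz principle, where you use the boundary-length bound for convex sectors), then Stieltjes/Abel summation to $|I|(b-a)\log R_\Lambda+O(1)$, normalization via $\sum_{n\in\Nsc_\Lambda^+}|n|^{-2}=2\pi^2m_\Lambda$, and Portmanteau plus a pairwise union bound; that you apply Portmanteau to the pair band rather than to $D_{N,\delta}$ is immaterial.

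One small slip: your displayed inequality should use the $\binom N2$ unordered pairs. With the ordered count $N(N-1)$, the band bound $2\delta$ gives $2N(N-1)\delta$, not the stated $N(N-1)\delta$. Your unordered computation is the correct one.
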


Consider first the (normalized) linear optimal competitor for $q=2$:
its real Fourier series representation can be written as
\begin{gather*}
    \sZ_\Lambda(x)=\sum_{n\in\Nsc_\Lambda^+}
    c_{\Lambda,n}\frac{n}{|n|}\sg_n^1(x),\\
    \sg_n^1(x)=\mathsf x_n\cos(2\pi n\cdot x)
                     +\mathsf y_n\sin(2\pi n\cdot x),\\
    \sg_n^2(x)=\mathsf y_n\cos(2\pi n\cdot x)
                     -\mathsf x_n\sin(2\pi n\cdot x),\\
    \mathsf x_n=\sqrt2\,\im\widehat\xi(2\pi n),
    \qquad
    \mathsf y_n=\sqrt2\,\re\widehat\xi(2\pi n).
\end{gather*}
Competitors for $q\neq 2$ will be constructed generalizing this expansion, by considering stochastic integrals whose kernel also depend on the frequency mode direction:
\begin{equation}\label{eq:discrete-noise-field}
    \sw_{\Lambda,x}^a(f)
    =\sum_{n\in\Nsc_\Lambda^+}
    c_{\Lambda,n}f(s_n,\theta_n)\sg_n^a(x),
    \quad f\in C([0,1]\times [0,\pi]),\quad a=1,2.
\end{equation}
For fixed $x$, $\sw_{\Lambda,x}^{1,2}(f)$ are independent isonormal Gaussian processes on $L^2(\mu_\Lambda)$. 
Let us introduce the Gaussian Hilbert spaces associated to $\sw^{1,2}$ and their discrete counterparts we just defined. 
On one hand, define
\begin{equation*}
    \HH=L^2\left(
    [0,1]\times[0,\pi]\times\{1,2\},
    ds\,\frac{d\theta}{\pi}\otimes\#\right),
\end{equation*}
where $\#$ is counting measure, and, for the discrete noise, let
\begin{equation}\label{eq:discrete-gaussian-space}
    \HH_\Lambda=L^2(A_\Lambda,\nu_\Lambda),
    \quad 
    A_\Lambda=\Nsc_\Lambda^+\times\{1,2\},
    \quad
    \nu_\Lambda(\{(n,a)\})=c_{\Lambda,n}^2.
\end{equation}
If $f_d\in\R^2\otimes\HH^{\odot d}$ is represented by a continuous
symmetric kernel, $I_d(f_d)$ denotes its componentwise multiple integral with respect to $(\sw^1,\sw^2)$.  The notation $I_{d,\Lambda,x}(f_d)$ denotes the multiple integral with respect to the discrete isonormal process \eqref{eq:discrete-noise-field}, using the restriction $f_{d,\Lambda}\in\R^2\otimes\HH_\Lambda^{\odot d}$ of the same kernel.  Our normalization is
\begin{align*}
    E\bra{I_d(f_d)\cdot I_d(g_d)}
    &=\one_{d=e} d!\,\brak{f_d,g_d},
    \\
    E\bra{I_{d,\Lambda,x}(f_d)\cdot I_{d,\Lambda,x}(g_d)}
    &=\one_{d=e} d!\,\brak{f_{d,\Lambda},g_{d,\Lambda}}.
\end{align*}
Because $\nu_\Lambda$ is atomic, tuples containing repeated atoms contribute to the discrete integral, the continuous representative fixes the values of $f_{d,\Lambda}$ on diagonals.

For every fixed $d\ge1$ and $r\in(1,\infty)$, Wiener--It\^o isometry and Gaussian hypercontractivity give, uniformly in $\Lambda$ and $x$,
\begin{equation*}
    \|I_{d,\Lambda,x}(f_d)\|_{L^r}
    \le C_{d,r}\|I_{d,\Lambda,x}(f_d)\|_{L^2}
    \le C_{d,r}\|f_d\|_\infty.
\end{equation*}
For $r<2$, the first inequality follows by monotonicity of
$L^r$ norms.  Continuous functions are dense in the one-particle space, algebraic finite-rank tensors are dense in every symmetric tensor power, and finite Wiener chaoses are dense in Gaussian $L^r$. Consequently, continuous finite-rank kernels form a dense subspace in every Gaussian $L^r$, $1<r<\infty$.

Let us define the stationary extension of a finite-chaos vector by

\begin{equation}\label{eq:stationary-lift}
    \sF=c+\sum_{d=1}^D I_d(f_d),\qquad
    \sF_\Lambda(x)=c+\sum_{d=1}^D I_{d,\Lambda,x}(f_d).
\end{equation}
For a tuple $z_j=(s_j,\theta_j,a_j)$, let $M=\set{j:s_j=\max_i s_i}$ and define symmetric kernels
\begin{equation}\label{eq:max-time-kernel-projection}
    (\PR_df_d)(z_1,\ldots,z_d)
    =\frac1{|M|}\sum_{j\in M}P_{\theta_j}
    f_d(z_1,\ldots,z_d),
    \qquad
    \cQ_d=\id-\PR_d.
\end{equation}
Away from the time-tie set (which has zero continuum measure), $M=\{j_*\}$ and $\PR_d$ projects according to the unique largest time.

\begin{lemma}
\label{lem:chaos-scale-projection}
For $\sF$ as in \eqref{eq:stationary-lift},
\begin{equation}\label{eq:chaos-scale-projection}
    \Pscale\sF
    =c+\sum_{d=1}^D I_d(\PR_df_d),
    \qquad
    \Qscale\sF
    =\sum_{d=1}^D I_d(\cQ_df_d).
\end{equation}
\end{lemma}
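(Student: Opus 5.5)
By linearity of $\Pscale$, $\Qscale$ and $\PR_d$, together with $\Pscale c=c$ and $\Qscale c=0$ for constants, it suffices to treat a single chaos term $\sF=I_d(f_d)$ with $d\ge1$ and a continuous symmetric kernel. The plan is to compute its martingale representation \eqref{eq:white-noise-representation} explicitly and then apply the definitions in \cref{lem:scale-projections}.

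The first step is the standard iterated-integral identity for multiple Wiener integrals. For symmetric $f_d$ one has
\begin{equation*}
  I_d(f_d)=\sum_{a=1}^2\int_0^1\int_0^\pi d\, I_{d-1}\bigl(f_d(\cdot,(s,\theta,a))\one_{[0,s)^{d-1}}\bigr)\,\sw^a(ds,d\theta).
\end{equation*}
Here the inner integral is taken over the remaining $d-1$ variables with their times restricted to $[0,s)$. In the convention of \cite{Janson97} this follows by decomposing $[0,1]^d$ into the $d!$ time-ordered simplices (the tie set is null) and using symmetry. The inner integrand is $\F_s$-measurable, so it is predictable, and this identifies
\begin{equation*}
  \Phi^a_{\sF}(s,\theta)=d\,I_{d-1}\bigl(f_d(\cdot,(s,\theta,a))\one_{[0,s)^{d-1}}\bigr).
\end{equation*}
Uniqueness of the representation \eqref{eq:white-noise-representation} makes this the representation used in \cref{lem:scale-projections}.

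The second step applies $P_\theta$. Since $P_\theta$ is a deterministic matrix acting on the vector index of $f_d$, it commutes with $I_{d-1}$. Hence
\begin{equation*}
  P_\theta\Phi^a_{\sF}(s,\theta)=d\,I_{d-1}\bigl(P_\theta f_d(\cdot,(s,\theta,a))\one_{[0,s)^{d-1}}\bigr),
\end{equation*}
so $\Pscale\sF$ is the iterated integral of the kernel $(z_1,\dots,z_d)\mapsto P_{\theta_{j_*}}f_d(z)$ restricted to the region where $z_{j_*}$ carries the strictly largest time.

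To recognize this as $I_d(\PR_d f_d)$, note that off the tie set the kernel $P_{\theta_{j_*}}f_d$ is symmetric. On the tie set, which has zero $(ds\,d\theta/\pi)^{\otimes d}$ measure, it agrees with $\PR_d f_d$ up to a null set. Running the iterated-integral identity backwards for $\PR_d f_d$ (a bounded measurable symmetric kernel, so $I_d$ is still defined) then gives $\Pscale\sF=I_d(\PR_d f_d)$. The statement for $\Qscale$ follows from $\Qscale=\id-\Pscale$ and $\cQ_d=\id-\PR_d$, or directly from $Q_\theta=\id-P_\theta$.

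The main obstacle is bookkeeping rather than analysis. One must check that the kernel $P_{\theta_{j_*}}f_d$, which is discontinuous across time ties, is a legitimate $L^2$ symmetric kernel. One must also check that the unsymmetrized and symmetrized forms of the iterated integral coincide, and that the normalization constant $d$ matches the $d!$ isometry convention. Since the tie set is null and every integrand is bounded, these checks reduce to applying the iterated-integral formula to $L^2$ kernels, which extends from continuous ones by the isometry.
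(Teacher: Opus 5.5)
Your proposal is correct and follows essentially the same route as the paper. You reduce to a single homogeneous chaos, identify the predictable integrand $d\,I_{d-1}\bigl(f_d((s,\theta,a),\cdot)\one_{\{s_1,\ldots,s_{d-1}<s\}}\bigr)$ by splitting into the $d!$ ordered time simplices, apply $P_\theta$ to the output vector, recognize the symmetrized kernel as $\PR_d f_d$ off the null tie set, and obtain $\Qscale$ by complementarity. The extra bookkeeping you list (normalization and admissibility of the discontinuous kernel) just makes explicit what the paper's short proof leaves implicit.
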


\begin{proof}
It is enough to consider a single homogeneous chaos. The predictable integrand in the martingale representation of $I_d(f_d)$ is, for $a\in\{1,2\}$,
\begin{equation}\label{eq:homogeneous-chaos-integrand}
    \Phi_{I_d(f_d)}^a(s,\theta)
    =d\,I_{d-1}\left(
    f_d((s,\theta,a),\cdot)\,
    \one_{\{s_1,\ldots,s_{d-1}<s\}}\right).
\end{equation}
This follows by writing the symmetric integral over the union of the
$d!$ ordered time simplices.  Definition \eqref{eq:scale-solenoidal} applies $P_\theta$ to the vector output in \eqref{eq:homogeneous-chaos-integrand}.  After symmetrization, the resulting $d$-variable kernel is exactly $\PR_df_d$ almost everywhere: the projection is indexed by the variable carrying the unique largest time.  The averaging convention in \eqref{eq:max-time-kernel-projection} only specifies the null time-tie set and preserves symmetry.
This proves the formula for $\Pscale$; the formula for $\Qscale$ follows by complementarity.
\end{proof}

The key step is to observe that the action of kernel projections $\PR_d$ on the stationary extension is close to the Leray projection onto solenoidal fields.

\begin{lemma}\label{lem:largest-mode-estimate}
Let $d\ge1$, let $f_d$ be a bounded continuous symmetric kernel, and write
\begin{equation*}
 \sF_{d,\Lambda}(x)=I_{d,\Lambda,x}(f_d),
 \qquad
 \sF_{d,\Lambda}^{\max}(x)=I_{d,\Lambda,x}(\PR_df_d).
\end{equation*}
With the convention $D_{1,\delta}=\varnothing$, for every $\delta>0$,
\begin{align}
 E\bra{\int_{\T^2}
 \left|\Psol\sF_{d,\Lambda}-\sF_{d,\Lambda}^{\max}\right|^2}
 \le C_d\|f_d\|_\infty^2
 \left(\pa{\frac{\Lambda}{2\pi}}^{-2\delta}
 +\mu_\Lambda^{\otimes d}(D_{d,\delta}\times\Theta^d)\right).
 \label{eq:largest-mode-estimate}
\end{align}
The analogous estimate holds with $\Qgrad$, $\cQ_d$, and the
same right-hand side.
\end{lemma}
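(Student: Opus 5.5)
The plan is to expand $I_{d,\Lambda,x}(f_d)$ in the real Fourier basis, so that the Leray projection becomes an explicit matrix multiplier on each trigonometric monomial. Then the discrepancy between $\Psol$ and $\PR_d$ can be read off term by term.

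\textbf{Step 1: Fourier expansion of the chaos.} The discrete multiple integral is a finite sum over tuples $(n_1,a_1),\ldots,(n_d,a_d)\in A_\Lambda$ of
\[
c_{\Lambda,n_1}\cdots c_{\Lambda,n_d}\,f_d(z_1,\ldots,z_d)\,\prod_j \sg^{a_j}_{n_j}(x),
\]
with diagonal Wick corrections. Each product $\prod_j\sg^{a_j}_{n_j}(x)$ expands by product-to-sum formulas into trigonometric functions with frequencies $2\pi(\pm n_1\pm\cdots\pm n_d)$, and the coefficients are polynomials in the Gaussians $\mathsf x_n,\mathsf y_n$. The Wick corrections are removed by the renormalization; equivalently, I would work with the complex Fourier coefficients $\widehat\xi(\pm 2\pi n_j)$, for which the Wick product of distinct modes is just the product.

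On a monomial with total frequency $k=2\pi(\epsilon_1n_1+\cdots+\epsilon_dn_d)$, the operator $\Psol$ acts as $\id-\frac{k\otimes k}{|k|^2}$ (or as $\id$ if $k=0$). The field $\sF^{\max}$ instead acts as $P_{\theta_{j_*}}=\id-\frac{n_{j_*}\otimes n_{j_*}}{|n_{j_*}|^2}$, where $j_*$ is the index of the largest $|n_j|$; on ties it averages over the tied indices.

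\textbf{Step 2: Reduction to a weighted sum over tuples.} The contribution of distinct frequency combinations, together with their conjugates, is orthogonal in $L^2(\Omega\times\T^2)$, since different products of independent complex Gaussians are orthogonal. By stationarity, $E\int_{\T^2}|\cdot|^2$ is therefore bounded by
\[
C_d\|f_d\|_\infty^2\sum_{\text{tuples}}\prod_j c_{\Lambda,n_j}^2\,\Bigl|\frac{k\otimes k}{|k|^2}-\frac{n_{j_*}\otimes n_{j_*}}{|n_{j_*}|^2}\Bigr|^2 .
\]
Diagonal and Wick terms, where repeated atoms occur, involve fewer free indices. They carry an extra factor $\max_n c_{\Lambda,n}^2=O(1/\log\Lambda)$. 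This is absorbed into the right-hand side, for instance via the tie term, since repeated atoms have equal $s$.

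\textbf{Step 3: Splitting the sum.} I split the tuples according to whether $s_{(d)}-s_{(d-1)}>\delta$.
\begin{itemize}
\item[(a)] On the tie region, the matrix difference is bounded by $2$, and the weighted sum is exactly $\mu_\Lambda^{\otimes d}(D_{d,\delta}\times\Theta^d)$. For $d=1$ there is nothing to do: $k=\pm 2\pi n_1$, so the two operators agree exactly, consistent with $D_{1,\delta}=\varnothing$.
\item[(b)] On the separated region, $|n_{j_*}|\ge(\Lambda/2\pi)^\delta|n_j|$ for all $j\ne j_*$. Hence $k=\pm 2\pi n_{j_*}+r$ with $|r|\le (d-1)(\Lambda/2\pi)^{-\delta}|n_{j_*}|$, and the map $v\mapsto v\otimes v/|v|^2$ is Lipschitz away from the origin at scale $|v|$. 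The difference is therefore $O_d((\Lambda/2\pi)^{-\delta})$, its square is $O((\Lambda/2\pi)^{-2\delta})$, and the total weight is at most $1$. This applies once $\Lambda$ is large enough that $(d-1)(\Lambda/2\pi)^{-\delta}<1/2$; otherwise the estimate is trivial after enlarging $C_d$.
\end{itemize}

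The statement for $\Qgrad,\cQ_d$ follows immediately, since $\Qgrad\sF-I(\cQ_df_d)=-(\Psol\sF-\sF^{\max})$.

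\textbf{Main obstacle.} The delicate point is Step 2, namely the orthogonality bookkeeping. Distinct ordered tuples and sign choices can produce the same monomial, for example via permutations or the symmetry $n\leftrightarrow -n$, or can coincide after conjugation. Cross terms between such representations must be controlled using only the symmetry of $f_d$ and counting with multiplicity $\le C_d$, via Cauchy--Schwarz over at most $d!\,2^d$ representations. I would handle this by writing everything in complex exponential form indexed by $\pm n$. Then each monomial $\prod_j\widehat\xi(2\pi\epsilon_jn_j)$ with distinct $\pm$-classes has a canonical representative up to permutation. Applying Cauchy--Schwarz with a $d$-dependent constant then yields the bound.
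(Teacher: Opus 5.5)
Your route is the paper's route in substance. You diagonalize translations in complex Fourier modes and observe that a chaos component with mode tuple $\mathbf n$ and signs $\boldsymbol\sigma$ has spatial frequency $K=\sum_j\sigma_jn_j$. Hence $\Psol$ acts on it by $P_K$, while $\PR_d$ acts by the largest-mode projector. You then split into $D_{d,\delta}$, where the multiplier difference is bounded by $2$, and its complement, where the Lipschitz bound gives $O_d((\Lambda/2\pi)^{-\delta})$ for $P_K-P_{n_{j_*}}$.

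The paper packages all of this as an exact Fock-space identity,
$E|\Psol\sF_{d,\Lambda}(x)-\sF^{\max}_{d,\Lambda}(x)|^2=d!\,\|(\mathbf\Pi_{d,\Lambda}-\mathbf M_{d,\Lambda})f_{d,\Lambda}\|^2$,
with both operators acting fiberwise on the full symmetric tensor space. This removes the representation and Cauchy--Schwarz bookkeeping that you single out as the main obstacle. More importantly, it treats repeated modes on the same footing as every other tuple.

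That last point is where your argument has a real gap. You split off the tuples with a repeated mode and bound them crudely by $O(\max_nc_{\Lambda,n}^2)=O(1/\log\Lambda)$. You then absorb this ``via the tie term, since repeated atoms have equal $s$''. This works for $d=2$, but not for $d\ge3$:
\begin{itemize}
\item A tuple such as $n_1=n_2$ with $|n_3|\gg|n_1|$ is not in $D_{d,\delta}$, because $D_{d,\delta}$ only looks at the two largest times.
\item For fixed $\delta$, an error of size $1/\log\Lambda$ is not dominated by $(\Lambda/2\pi)^{-2\delta}$.
\end{itemize}
So, as written, you only obtain the estimate up to an additive $O(\|f_d\|_\infty^2/\log\Lambda)$.

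The clean repair is not to separate these terms at all. Expand instead in the Wick (complex Hermite) monomials $\colon\!\prod_j\widehat\xi(2\pi\sigma_jn_j)\!\colon$. These are orthogonal for distinct multisets $\{(n_j,\sigma_j)\}$ even when modes repeat. Under translation they are multiplied by $\e^{2\pi\mathrm iK\cdot x}$: the pair $(\sg^1_n(x),\sg^2_n(x))$ is a rotation of $(\sg^1_n(0),\sg^2_n(0))$ by the angle $2\pi n\cdot x$, and Wick ordering commutes with orthogonal changes of Gaussian variables. A pair $(n,+),(n,-)$ simply contributes $0$ to $K$. Hence the off-tie multiplier bound applies whenever the top mode is separated, whatever repeats occur below it, and exact ties at the top fall into $D_{d,\delta}$.

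Alternatively, you can keep your crude bound and prove a lower bound on the tie mass:
$\mu_\Lambda^{\otimes d}(D_{d,\delta}\times\Theta^d)\gtrsim_d1/\log(\Lambda/2\pi)$ whenever $\delta\ge1/\log(\Lambda/2\pi)$.
For instance, place two of the $d$ points in one annulus $\e^m<|n|\le\e^{m+1}$ with $m\ge\frac12\log(\Lambda/2\pi)$ and the remaining points below it. For smaller $\delta$ no argument is needed, since the first term on the right-hand side already exceeds $\e^{-2}$.
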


\begin{proof}
Let us consider the Fock-space representation on the (finite-dimensional) Gaussian
Hilbert space $\HH_\Lambda$ defined in \eqref{eq:discrete-gaussian-space}.
Define the isonormal process on $h\in\HH_\Lambda$,
\begin{equation*}
 W_{\Lambda,x}(h)=
 \sum_{n\in\Nsc_\Lambda^+}\sum_{a=1}^2
 c_{\Lambda,n}h(n,a)\sg_n^a(x).
\end{equation*}
If $U_x$ is the orthogonal operator on $\HH_\Lambda$ whose block at $n$ is
\begin{equation*}
 \begin{pmatrix}
  \cos(2\pi n\cdot x)&-\sin(2\pi n\cdot x)\\
  \sin(2\pi n\cdot x)& \cos(2\pi n\cdot x)
 \end{pmatrix},
\end{equation*}
then the definitions of $\sg_n^1$ and $\sg_n^2$ give $W_{\Lambda,x}(h)=W_{\Lambda,0}(U_xh).$
If $f_{d,\Lambda}\in\R^2\otimes\HH_\Lambda^{\odot d}$ is the restriction of
$f_d$ to the atoms of $\mu_\Lambda$, including the channel variables, we have the stochastic integral representation
\begin{equation}\label{eq:lift-fock-representation}
 \sF_{d,\Lambda}(x)
 =I_d^{W_{\Lambda,0}}\big(U_x^{\otimes d}f_{d,\Lambda}\big).
\end{equation}

We next diagonalize translations.  In the complexification of $\HH_\Lambda$, let
\begin{equation*}
 \varepsilon_{n,a}=c_{\Lambda,n}^{-1}\one_{\{(n,a)\}},
 \qquad
 \varepsilon_{n,\sigma}
 =\frac{\varepsilon_{n,1}-\mathrm i\sigma\varepsilon_{n,2}}{\sqrt2},
 \qquad \sigma\in\{-1,1\}.
\end{equation*}
These vectors form an orthonormal basis and $U_x\varepsilon_{n,\sigma}
 =\exp\left(2\pi\mathrm i\sigma n\cdot x\right)
 \varepsilon_{n,\sigma}.$
On $\C^2\otimes\HH_{\Lambda,\C}^{\otimes d}$ define
$\mathbf\Pi_{d,\Lambda}$ on the product basis by
\begin{equation}\label{eq:fock-hodge-multiplier}
 \mathbf\Pi_{d,\Lambda}
 \left(v\otimes\bigotimes_{j=1}^d\varepsilon_{n_j,\sigma_j}\right)
 =P_Kv\otimes\bigotimes_{j=1}^d\varepsilon_{n_j,\sigma_j},
 \qquad
 K=\sum_{j=1}^d\sigma_jn_j,
\end{equation}
where $P_0=\id$ and, for $K\ne0$,
$P_K=\id-K\otimes K/|K|^2$.  Since $K$ is unchanged by permutations,
$\mathbf\Pi_{d,\Lambda}$ leaves the symmetric tensor space invariant.  For a
mode tuple $\mathbf n=(n_1,\ldots,n_d)$, set
\begin{equation*}
 M(\mathbf n)=\set{j:s_{n_j}=\max_i s_{n_i}},
 \qquad
 \overline P(\mathbf n)
 =\frac1{|M(\mathbf n)|}\sum_{j\in M(\mathbf n)}P_{n_j},
\end{equation*}
and let $\mathbf M_{d,\Lambda}$ act on the output vector by
$\overline P(\mathbf n)$ on the fiber with mode tuple $\mathbf n$.
This operator is independent of the channel basis, and its restriction to
$f_{d,\Lambda}$ is precisely the restriction of $\PR_df_d$.  Both
$\mathbf\Pi_{d,\Lambda}$ and $\mathbf M_{d,\Lambda}$ preserve the symmetric tensor
subspace and commute with complex conjugation.  For $\mathbf\Pi_{d,\Lambda}$, this
uses that $K$ is permutation invariant and that conjugation replaces $K$ by
$-K$, while $P_{-K}=P_K$; for $\mathbf M_{d,\Lambda}$, it follows directly from
the symmetric largest-radius average.  Hence all identities below restrict to
the real symmetric tensor space.

Because $\HH_\Lambda$ is finite dimensional, the field in \eqref{eq:lift-fock-representation} is a trigonometric polynomial in $x$. Expand $f_{d,\Lambda}$ in the complex product basis.  A coefficient on the fiber $(\mathbf n,\boldsymbol\sigma)$ is multiplied by
\begin{equation*}
 \prod_{j=1}^d\exp(2\pi\mathrm i\sigma_jn_j\cdot x)
 =\exp(2\pi\mathrm iK\cdot x),
 \qquad K=\sum_{j=1}^d\sigma_jn_j.
\end{equation*}
Thus its spatial Fourier frequency is exactly $K$, including $K=0$.
Applying the Fourier multiplier defining $\Psol$ therefore applies $P_K$ to its output vector, which is precisely the action of
$\mathbf\Pi_{d,\Lambda}$.  Moreover, $\mathbf M_{d,\Lambda}$ is diagonal in the mode tuple and acts only on the output vector, so it commutes with $U_x^{\otimes d}$ and
\begin{equation*}
 \sF_{d,\Lambda}^{\max}(x)
 =I_d^{W_{\Lambda,0}}\left(
 U_x^{\otimes d}\mathbf M_{d,\Lambda}f_{d,\Lambda}\right).
\end{equation*}
Linearity of the complexified multiple integral now gives the exact identity
\begin{equation}\label{eq:exact-fock-hodge-identity}
 \Psol\sF_{d,\Lambda}(x)-\sF_{d,\Lambda}^{\max}(x)
 =I_d^{W_{\Lambda,0}}\left(
 U_x^{\otimes d}(\mathbf\Pi_{d,\Lambda}-\mathbf M_{d,\Lambda})f_{d,\Lambda}
 \right).
\end{equation}
All tensors in this identity are fixed by complex conjugation, so the
identity restricts to the real chaos.  Since the operators preserve the symmetric tensor space, the vector-valued Wiener--It\^o isometry and the unitarity of $U_x$ yield, for every $x$,
\begin{equation}\label{eq:exact-fock-norm}
 E\bra{\left|\Psol\sF_{d,\Lambda}(x)-\sF_{d,\Lambda}^{\max}(x)\right|^2}
 =d!\left\|(\mathbf\Pi_{d,\Lambda}-\mathbf M_{d,\Lambda})f_{d,\Lambda}
 \right\|^2.
\end{equation}
This argument takes place on the full symmetric tensor space.  In
particular, repeated atoms and zero output frequencies are included in the same exact identity; no contraction expansion is being suppressed.

It remains to estimate the multiplier in \eqref{eq:fock-hodge-multiplier}.
Put $\varepsilon=\pa{\frac{\Lambda}{2\pi}}^{-\delta}$.  If
$\mathbf n\notin D_{d,\delta}$, there is a unique largest index $j_*$ and
\begin{equation}\label{eq:largest-radius-separation}
 |n_j|\le\varepsilon|n_{j_*}|
 \qquad(j\ne j_*).
\end{equation}
For any choice of signs, write $K=\sigma_{j_*}n_{j_*}+H$ and $|H|\le(d-1)\varepsilon|n_{j_*}|$.
We use the elementary estimate
\begin{equation}\label{eq:projector-direction-bound}
 \|P_{v+h}-P_v\|
 \le 8\frac{|h|}{|v|}
 \qquad(v\ne0),
\end{equation}
with the convention $P_0=\id$. 
% Here is a proof. If $|h|\ge|v|/2$, then the right-hand side
% is at least $4$, whereas the difference of two orthogonal projections has
% norm at most $1$.  If $|h|<|v|/2$, then $v+h\ne0$ and
% \begin{equation*}
%  \left|\frac{v+h}{|v+h|}-\frac v{|v|}\right|
%  \le \frac{2|h|}{|v+h|}
%  \le4\frac{|h|}{|v|}.
% \end{equation*}
% Using $P_w=\id-( w/|w|)\otimes( w/|w|)$ for $w\ne0$ and
% $\|u\otimes u-w\otimes w\|\le2|u-w|$ for unit vectors proves
% \eqref{eq:projector-direction-bound}. 
Since
$P_{\sigma_{j_*}n_{j_*}}=P_{n_{j_*}}$, equations
\eqref{eq:largest-radius-separation}--\eqref{eq:projector-direction-bound}
give
\begin{equation}\label{eq:off-tie-multiplier-bound}
 \|P_K-\overline P(\mathbf n)\|
 =\|P_K-P_{n_{j_*}}\|
 \le 8(d-1)\pa{\frac{\Lambda}{2\pi}}^{-\delta}
 \qquad(\mathbf n\notin D_{d,\delta}).
\end{equation}
On $D_{d,\delta}$, both $P_K$ and $\overline P(\mathbf n)$ have operator
norm at most one, and therefore
\begin{equation}\label{eq:on-tie-multiplier-bound}
 \|P_K-\overline P(\mathbf n)\|\le2.
\end{equation}
For $d=1$, one has $K=\pm n_1$ and the two multipliers agree exactly.

Finally change, for each fixed mode tuple, from the real channel basis
$(a_1,\ldots,a_d)$ to the complex sign basis
$(\sigma_1,\ldots,\sigma_d)$.  This change of basis is unitary.  Combining
\eqref{eq:off-tie-multiplier-bound} and
\eqref{eq:on-tie-multiplier-bound}, and then returning to the real channel basis, gives
\begin{align*}
 &\left\|(\mathbf\Pi_{d,\Lambda}-\mathbf M_{d,\Lambda})f_{d,\Lambda}\right\|^2\le C_d
 \sum_{n_1,\ldots,n_d\in\Nsc_\Lambda^+}
 \left(\prod_{j=1}^dc_{\Lambda,n_j}^2\right)\\
 &\qquad\quad\times
 \left(\pa{\frac{\Lambda}{2\pi}}^{-2\delta}+\one_{D_{d,\delta}}(s_{n_1},\ldots,s_{n_d})\right)
 \sum_{a_1,\ldots,a_d=1}^2
 \left|f_d(z_1,\ldots,z_d)\right|^2,
\end{align*}
where $z_j=(s_{n_j},\theta_{n_j},a_j)$.  Since the last channel sum is at most $2^d\|f_d\|_\infty^2$ and $\sum_nc_{\Lambda,n}^2=1$, the right-hand side is bounded by
\begin{equation*}
 C_d\|f_d\|_\infty^2
 \left(
 \pa{\frac{\Lambda}{2\pi}}^{-2\delta}
 +\mu_\Lambda^{\otimes d}(D_{d,\delta}\times\Theta^d)
 \right).
\end{equation*}
Together with \eqref{eq:exact-fock-norm}, integration over the unit-volume torus, and absorption of $d!2^d$ into $C_d$, this proves \eqref{eq:largest-mode-estimate}.

For the gradient projection, define $Q_K:=\id-P_K$, including
$Q_0=0$.  Since $Q_K-(\id-\overline P(\mathbf n))=-(P_K-\overline P(\mathbf n))$, the same argument gives the identical estimate with $\Qgrad$ and $\cQ_d$.
\end{proof}

\begin{theorem}\label{thm:hodge-limit}
Let $\sF$ be a finite-chaos cylindrical vector as in
\eqref{eq:stationary-lift}, and let $\sF_\Lambda$ be its stationary lift.  Then
\begin{equation}\label{eq:hodge-joint-convergence}
 \big(\sZ_\Lambda(0),\sF_\Lambda(0),\Psol\sF_\Lambda(0),\Qgrad\sF_\Lambda(0)\big)
 \ \Longrightarrow\ 
 \big(\sG,\sF,\Pscale\sF,\Qscale\sF\big),
\end{equation}
with convergence of every mixed moment.
The same statement holds at any fixed spatial point.
\end{theorem}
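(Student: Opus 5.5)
The plan is to reduce everything to convergence of finite-chaos multiple integrals with continuous kernels. First observe that $\sZ_\Lambda(0)=\sw^1_{\Lambda,0}(e_\theta)=I_{1,\Lambda,0}(g_1)$ with $g_1(s,\theta,a)=\one_{a=1}e_\theta$. Here we use $n/|n|=e_{\theta_n}$, with $\theta_n$ the angle of $n\in\Nsc^+$, which lies in $[0,\pi)$. The limit object is $\sG=I_1(g_1)$. Likewise, $\sF_\Lambda(0)$ is the lift of $\sF$, and by \cref{lem:chaos-scale-projection} the limits $\Pscale\sF,\Qscale\sF$ are the finite-chaos variables with kernels $\PR_df_d,\cQ_df_d$. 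Note that $\PR_df_d$ is bounded, continuous off the tie set, and symmetric. By \cref{lem:largest-mode-estimate}, combined with stationarity (the lift is stationary in $x$, so the $L^2(\Omega\times\T^2)$ bound is an $L^2(\Omega)$ bound at each point), we get
\[
E\big|\Psol\sF_\Lambda(0)-c-\textstyle\sum_d I_{d,\Lambda,0}(\PR_df_d)\big|^2\lesssim (\Lambda/2\pi)^{-2\delta}+\mu_\Lambda^{\otimes d}(D_{d,\delta}\times\Theta^d).
\]
The constant $c$ is handled by $P_0=\id$. Taking $\limsup_\Lambda$ and using \eqref{eq:tie-bound}, then letting $\delta\to0$, shows that this difference vanishes in $L^2$. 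The same holds for $\Qgrad$. Hypercontractivity on fixed chaoses upgrades this to $L^r$ for all $r$. It therefore suffices to prove joint convergence with all mixed moments of
\[
\big(I_{1,\Lambda,0}(g_1),\ I_{d,\Lambda,0}(f_d),\ I_{d,\Lambda,0}(\PR_df_d)\big)_{d\le D}
\]
to the corresponding continuous integrals.

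For this joint convergence I would use the discrete-to-continuous Wiener chaos principle: a family of multiple integrals over an atomic isonormal process whose atoms have vanishing maximal mass, $\max c_{\Lambda,n}^2=O(1/\log\Lambda)$, with kernels restricted from a fixed kernel. Moments of products of multiple integrals are given by the diagram (contraction) formula: $E\prod_i I_{d_i}(h_i)$ is a sum over complete pairings without self-pairings within a block, of integrals of products of kernels against $\nu_\Lambda^{\otimes(\text{pairs})}$. Each such pairing integral is an integral of a bounded function against $\mu_\Lambda^{\otimes m}$ (times channel counting). It converges by \cref{lem:log-polar-equidistribution} to the same integral against $(ds\,d\theta/\pi)^{\otimes m}$, provided the integrand is continuous off a null set. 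In the discrete setting, repeated atoms inside a single kernel are not excluded, but they are counted in the pairing formula exactly as the isometry prescribes. Their contribution is bounded by $\max c^2_{\Lambda,n}$ times bounded sums, so it vanishes in the limit. Since Gaussian chaos laws are determined by moments (finite chaos has exponential moments of order below $2/d$, so the moment problem is determinate), moment convergence gives convergence in law.

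The main obstacle is the kernel $\PR_df_d$, which is discontinuous on the tie set $\{s_i=s_j \text{ maximal}\}$. Weak convergence of $\mu_\Lambda^{\otimes m}$ only handles integrands whose discontinuity set is null for the limit, and that set is indeed Lebesgue null. However, a pairing may identify two time variables, so the integrand is a function of the paired variables. The tie set in those variables is still a proper lower-dimensional subset, except on diagonals, which are already negligible. I would handle it concretely with the same cutoff trick: replace $\PR_df_d$ by a continuous kernel that agrees with it off a $\delta$-neighbourhood of the tie set. The error is controlled in discrete $L^2$ by $\mu_\Lambda^{\otimes d}(D_{d,\delta})$ through \eqref{eq:tie-bound}, and in continuous $L^2$ by $O(\delta)$. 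With hypercontractivity this gives uniform-in-$\Lambda$ approximation of all mixed moments, and the result follows.

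Finally, for an arbitrary fixed point $x$, the law of the vector is the same as at $0$. Indeed $W_{\Lambda,x}=W_{\Lambda,0}\circ U_x$ with $U_x$ orthogonal, $\Psol$ commutes with translations, and the coefficient family is invariant in law under the phase rotations $U_x$. So the statement at $x$ follows from the statement at $0$.
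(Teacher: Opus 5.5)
Your reduction steps are the same as the paper's. You use \cref{lem:largest-mode-estimate}, \eqref{eq:tie-bound} and hypercontractivity to replace $\Psol\sF_\Lambda(0)$ by the max-time lift, cut $\PR_df_d$ off near the tie set, and use stationarity for other points. Your diagram-formula computation of mixed moments is also sound.

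\textbf{The gap.} The gap is the final inference: that moment convergence gives convergence in law because finite-chaos laws are moment-determinate. This is false once the top chaos degree $D$ is at least $3$.
\begin{itemize}
\item Hypercontractivity only gives $\|X\|_{2k}\lesssim k^{D/2}$, so Carleman's condition $\sum_k\|X\|_{2k}^{-1}=\infty$ is guaranteed only for $D\le 2$.
\item Likewise, $E\exp(c|X|^{2/D})<\infty$ with $2/D<1$ does not imply determinacy.
\item Concretely, take $Z=\sw^1(\varphi)$ with $\varphi$ continuous and $\|\varphi\|_{\HH}=1$. Then $Z^3=(Z^3-3Z)+3Z$ is a legitimate finite-chaos variable of the form \eqref{eq:stationary-lift}, with kernels $\varphi^{\otimes 3}$ and $3\varphi$, multiplied by a fixed vector.
\item The cube of a Gaussian is a classical example of a moment-indeterminate law (Berg, 1988).
\end{itemize}
So, as written, your argument proves convergence of all mixed moments in \eqref{eq:hodge-joint-convergence}, but not the weak convergence, for general $\sF$.

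\textbf{The fix.} The paper sidesteps the moment problem by a finite-dimensional reduction.
\begin{itemize}
\item For finite-rank continuous kernels, i.e.\ finite sums of tensor products of one-variable continuous $\varphi_j$, the lifted variables at $x=0$ are Wick polynomials in the finite Gaussian vector $(\sw^a_{\Lambda,0}(\varphi_j))_{a,j}$.
\item By \cref{lem:log-polar-equidistribution}, the covariance of this vector converges. The Wick coefficients depend continuously on that covariance, so convergence in law follows from the continuous mapping theorem. Moments converge by uniform Gaussian bounds.
\item A general continuous kernel, including your $\delta$-cutoff of $\PR_df_d$, is approximated uniformly by symmetric finite-rank kernels.
\item The bound $\|I_{d,\Lambda,0}(g)\|_{L^2}\le C_d\|g\|_\infty$ makes the approximation error small in $L^2(\Omega)$ uniformly in $\Lambda$. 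This transfers weak convergence (for instance in bounded-Lipschitz distance), and, with hypercontractivity, all moments.
\end{itemize}
You need this reduction, or some other argument that does not rely on moment determinacy, to get the law. Once it is in place, your diagram-formula computation becomes optional.
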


\begin{proof}
We first record convergence of the unprojected lifts.  Suppose that every
kernel is a finite sum of tensor products of continuous one-variable
functions.  Then the variables in \eqref{eq:stationary-lift}, at $x=0$,
are polynomials in a finite Gaussian vector of the form $\big(\sw_{\Lambda,0}^a(\varphi_j)\big)_{a,j}$.
By \cref{lem:log-polar-equidistribution}, its covariance matrix converges to that of $\big(\sw^a(\varphi_j)\big)_{a,j}$.  The corresponding Wick polynomials
therefore converge jointly, with all moments.  A continuous kernel on the
compact parameter space can be approximated uniformly by such finite-rank
kernels.  The Wiener--It\^o isometry and hypercontractivity, uniformly for
fixed chaos degree, remove this restriction.  Consequently, $\big(\sZ_\Lambda(0),\sF_\Lambda(0)\big)
 \Longrightarrow(\sG,\sF)$ with convergence of every mixed moment.

We next treat the max-time projected kernels.  Although
$\PR_df_d$ is discontinuous on the time-tie set, it is continuous
away from that set.  For fixed $\delta>0$, multiply it by a continuous
cutoff which vanishes when the two largest times differ by at most
$\delta$ and equals one when they differ by at least $2\delta$.
The preceding finite-rank argument applies to the cut-off kernel.  The
Wiener--It\^o isometry, \eqref{eq:tie-bound}, and boundedness of $f_d$
show that the removed part has $L^2$-norm at most
$C_d\|f_d\|_\infty\delta^{1/2}+o_\Lambda(1)$.  Hypercontractivity gives the
same conclusion in every finite $L^r$.  Sending first $\Lambda\to\infty$
and then $\delta\downarrow0$ yields the joint convergence
\begin{equation}\label{eq:max-kernel-convergence}
 \left(
 \sZ_\Lambda(0),\sF_\Lambda(0),
 c+\sum_{d=1}^D I_{d,\Lambda,0}(\PR_df_d),
 \sum_{d=1}^D I_{d,\Lambda,0}(\cQ_df_d)
 \right)
 \Longrightarrow
 \big(\sG,\sF,\Pscale\sF,\Qscale\sF\big)
\end{equation}
with all mixed moments; here we used
\cref{lem:chaos-scale-projection}.

Finally, \cref{lem:largest-mode-estimate} and \eqref{eq:tie-bound} imply, for every fixed homogeneous chaos,
\begin{equation*}
 E\bra{\int_{\T^2}
 \left|\Psol I_{d,\Lambda,\cdot}(f_d)
       -I_{d,\Lambda,\cdot}(\PR_df_d)\right|^2}
 \longrightarrow0.
\end{equation*}
Both fields in the difference are stationary, so the same $L^2$
convergence holds at every fixed spatial point. The difference belongs to the $d$-th chaos, and hypercontractivity upgrades the convergence to every finite $L^r(\Omega)$ at such a point, as well as to $L^r(\Omega\times\T^2)$.  The corresponding statement for $\Qgrad$ follows by subtraction.  Combining these estimates with \eqref{eq:max-kernel-convergence} proves \eqref{eq:hodge-joint-convergence} and convergence of every mixed moment. Stationarity gives the same statement at every fixed spatial point.
\end{proof}

\begin{corollary}\label{cor:recovery}
Let $r\in(1,\infty)$.
\begin{enumerate}
\item For every $\sV\in\Ran_{L_1^r}\Pscale$, there are
stationary periodic solenoidal random fields $\sV_\Lambda$, polynomial in the
Fourier coefficients of $\xi_\Lambda$, such that
\begin{align*}
 (\sZ_\Lambda(0),\sV_\Lambda(0))&\Longrightarrow(\sG,\sV),
 \\
 E\bra{|\sV_\Lambda(0)|^r}&\longrightarrow E\bra{|\sV|^r},
 &E\bra{|\sZ_\Lambda(0)+\sV_\Lambda(0)|^r}&\longrightarrow E\bra{|\sG+\sV|^r}.
\end{align*}
\item For every $\sF\in\Ran_{L_1^r}\Qscale$, there are
stationary periodic mean-zero gradient random fields $\sF_\Lambda$, polynomial
in the Fourier coefficients of $\xi_\Lambda$, such that
\begin{align}
 (\sZ_\Lambda(0),\sF_\Lambda(0))&\Longrightarrow(\sG,\sF),
 \label{eq:dual-recovery}\\
 E\bra{|\sF_\Lambda(0)|^r}&\longrightarrow E\bra{|\sF|^r},
 &E[\sZ_\Lambda(0)\cdot\sF_\Lambda(0)]&\longrightarrow E[\sG\cdot\sF].
\end{align}
\end{enumerate}
\end{corollary}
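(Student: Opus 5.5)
The plan is a density argument: approximate by finite-chaos vectors, lift to stationary fields, and project with the Leray projection. For part (1), fix $\sV\in\Ran_{L_1^r}\Pscale$ and $\epsilon>0$. Since continuous finite-rank finite-chaos vectors are dense in $L_1^r$ and $\Pscale$ is bounded on $L_1^r$ (\cref{lem:scale-projections}), we pick $\sF^{(\epsilon)}=c+\sum_{d\le D}I_d(f_d)$ with $\|\sF^{(\epsilon)}-\sV\|_{L^r}\le\epsilon$. Then $\sV^{(\epsilon)}:=\Pscale\sF^{(\epsilon)}$ satisfies $\|\sV^{(\epsilon)}-\sV\|_{L^r}\le C_r\epsilon$, because $\Pscale\sV=\sV$. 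The natural candidate lift is
\[
\sV^{(\epsilon)}_\Lambda:=\Psol\sF^{(\epsilon)}_\Lambda,
\]
where $\sF^{(\epsilon)}_\Lambda$ is the stationary lift \eqref{eq:stationary-lift}. This field is stationary, periodic and solenoidal (constants are included in $\Ran\Psol$), and it is polynomial in the Fourier coefficients of $\xi_\Lambda$. By \cref{thm:hodge-limit},
\[
(\sZ_\Lambda(0),\sV^{(\epsilon)}_\Lambda(0))\Rightarrow(\sG,\Pscale\sF^{(\epsilon)})=(\sG,\sV^{(\epsilon)}),
\]
with convergence of all mixed moments. Convergence of moments gives uniform integrability of $|\sV^{(\epsilon)}_\Lambda(0)|^r$ and $|\sZ_\Lambda(0)+\sV^{(\epsilon)}_\Lambda(0)|^r$: these are bounded in $L^{2}$ because their $2r$-th moments converge, or directly by hypercontractivity, since everything lives in a fixed finite chaos. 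Hence $E|\sV^{(\epsilon)}_\Lambda(0)|^r\to E|\sV^{(\epsilon)}|^r$, and the same holds for the sum with $\sZ$.

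To reach $\sV$ itself we run a diagonal argument. Take $\epsilon_k\downarrow0$. The quantities $E|\sV^{(\epsilon_k)}|^r$, $E|\sG+\sV^{(\epsilon_k)}|^r$ and the law of $(\sG,\sV^{(\epsilon_k)})$ converge to the corresponding quantities for $\sV$, because $L^r$ convergence implies convergence in law jointly with $\sG$. Choose $\Lambda_k\uparrow\infty$ so that, for $\Lambda\ge\Lambda_k$, the moment differences are at most $1/k$ and a metrizing distance for weak convergence (for instance bounded-Lipschitz) between the laws of $(\sZ_\Lambda(0),\sV^{(\epsilon_k)}_\Lambda(0))$ and $(\sG,\sV^{(\epsilon_k)})$ is at most $1/k$. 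Then set $\sV_\Lambda:=\sV^{(\epsilon_k)}_\Lambda$ for $\Lambda\in[\Lambda_k,\Lambda_{k+1})$. All three claims follow by the triangle inequality.

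Part (2) is identical, with two substitutions. We use $\Qscale\sF^{(\epsilon)}$ in place of $\Pscale\sF^{(\epsilon)}$, and the lift $\sF_\Lambda:=\Qgrad\sF^{(\epsilon)}_\Lambda$, which is a mean-zero gradient field. \Cref{thm:hodge-limit} gives joint convergence with $\sZ_\Lambda(0)$ and all mixed moments. In particular $E[\sZ_\Lambda(0)\cdot\sF_\Lambda(0)]\to E[\sG\cdot\Qscale\sF^{(\epsilon)}]$, and this converges to $E[\sG\cdot\sF]$ as $\epsilon\to0$ by Hölder, since $\sG\in L^{r'}$. The diagonal argument is the same as in part (1).

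The only delicate point is that the approximation must be made at the level of $\Pscale\sF^{(\epsilon)}$ rather than $\sV$, and that a Leray-projected lift is still controlled. This is exactly what \cref{thm:hodge-limit}, built on \cref{lem:largest-mode-estimate}, provides for fixed finite chaos. The remaining bookkeeping is uniform integrability, which hypercontractivity in bounded chaos settles, and the choice of a diagonal sequence, which needs convergence of a countable family of quantities for each fixed $k$; neither should cause difficulty.
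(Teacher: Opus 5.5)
Your proposal is correct and follows essentially the same route as the paper. Both arguments approximate by finite-chaos vectors and project with $\Pscale$ (resp.\ $\Qscale$). They then lift and apply $\Psol$ (resp.\ $\Qgrad$), invoke \cref{thm:hodge-limit} together with hypercontractive uniform integrability at fixed approximation level, and conclude with a diagonal choice of $\Lambda_k$, handling the pairing by H\"older against $\sG$.
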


\begin{proof}
Choose finite-chaos cylindrical vectors $\sA_j$ converging to
$\sV$ in $L^r$.  Since $\Pscale$ is bounded and
$\Pscale\sV=\sV$,
\begin{equation*}
 \Pscale\sA_j\longrightarrow\sV
 \qquad\text{in }L^r.
\end{equation*}
Let $\sA_{j,\Lambda}$ be the stationary lift and set
$\sV_{j,\Lambda}=\Psol\sA_{j,\Lambda}$.  For fixed $j$,
\cref{thm:hodge-limit} gives joint convergence to
$(\sG,\Pscale\sA_j)$.  It also gives convergence of the two
$r$-power moments displayed in the statement: choose an integer
$m>r$, use the uniform $m$-moment bound supplied by fixed-chaos
hypercontractivity, and apply uniform integrability.

For every $j$, choose $\Lambda_j$ so large that, whenever $\Lambda\ge\Lambda_j$,
the bounded-Lipschitz distance between the relevant joint laws and the
absolute errors in both $r$-power moments are at most $j^{-1}$.
After making $\Lambda_j$ strictly increasing, take
$j(\Lambda)=\max\{j:\Lambda_j\le\Lambda\}$.  Since
$\Pscale\sA_j\to\sV$ in $L^r$, the diagonal fields
$\sV_\Lambda:=\sV_{j(\Lambda),\Lambda}$ have all the asserted convergence properties.
They are stationary, solenoidal, and polynomial in the Fourier coefficients
of $\xi_\Lambda$.

The proof for $\sF\in\Ran_{L_1^r}\Qscale$ is identical:
approximate $\sF$ by $\Qscale\sA_j$, lift $\sA_j$, and
apply $\Qgrad$.  The resulting fields are mean-zero gradients by
\eqref{eq:finite-hodge}.  For fixed $j$, the pairing converges by the
joint moment convergence in \cref{thm:hodge-limit}; as $j\to\infty$,
\begin{equation*}
 E[\sG\cdot(\Qscale\sA_j-\sF)]\longrightarrow0
\end{equation*}
by H\"older's inequality, because $\sG$ has moments of every order.
Including this pairing among the errors controlled in the diagonal choice proves \eqref{eq:dual-recovery} and the remaining assertions.
\end{proof}

\begin{proof}[Proof of \cref{thm:main}]
Set
\begin{equation*}
    e_{q,\Lambda}=m_\Lambda^{-q/2}E\bra{\int_{\T^2}|\sH_{q,\Lambda}|^q}.
\end{equation*}
We prove matching upper and lower bounds using the exact primal and dual optimizers from \cref{lem:limiting-duality}. Write
\begin{equation*}
    \sV_q:=\sU_q-\sG\in\Ran_{L_1^q}\Pscale.
\end{equation*}
Let $\sV_\Lambda$ be the solenoidal recovery sequence associated with
$\sV_q$ by \cref{cor:recovery}.  Since $\nabla\cdot\sJ_{\Lambda}=\xi_\Lambda$, the field $\sJ_{\Lambda}+\sqrt{m_\Lambda}\,\sV_\Lambda$ is admissible in \eqref{eq:matching-fixed-torus-energy}. By stationarity and primal recovery,
\begin{align}
    \limsup_{\Lambda\to\infty}e_{q,\Lambda}
    \le\lim_{\Lambda\to\infty}
    E\bra{\int_{\T^2}|\sZ_\Lambda+\sV_\Lambda|^q}
    =E\bra{|\sG+\sV_q|^q}
    =E\bra{|\sU_q|^q}
    =\kappa_q.
    \label{eq:main-limsup}
\end{align}

For the lower bound, use the exact optimizer $\sF_q=A_q(\sU_q)\in\Ran_{L_1^p}\Qscale$. By \eqref{eq:kappa-dual},
\begin{equation}\label{eq:exact-dual-value}
    E\bra{\sG\cdot\sF_q-\frac1p|\sF_q|^p}
    =\frac{\kappa_q}{q}.
\end{equation}
Let $\sF_\Lambda$ be the gradient recovery sequence associated with
$\sF_q$, and put $\sU_{q,\Lambda}:=\frac{\sH_{q,\Lambda}}{\sqrt{m_\Lambda}}$. The field $\sU_{q,\Lambda}-\sZ_\Lambda$ is solenoidal, whereas $\sF_\Lambda$ is a mean-zero gradient.  Therefore, sample by sample,
\begin{equation}\label{eq:finite-primal-dual-orthogonality}
    \int_{\T^2}\sU_{q,\Lambda}\cdot\sF_\Lambda
    =\int_{\T^2}\sZ_\Lambda\cdot\sF_\Lambda.
\end{equation}
Fenchel's inequality and \eqref{eq:finite-primal-dual-orthogonality} give
\begin{equation*}
    \frac{e_{q,\Lambda}}q
    \ge E\bra{\int_{\T^2}
    \pa{\sZ_\Lambda\cdot\sF_\Lambda-\frac1p|\sF_\Lambda|^p}}.
\end{equation*}
Stationarity and dual recovery, followed by
\eqref{eq:exact-dual-value}, yield
\begin{equation}\label{eq:main-liminf}
    \liminf_{\Lambda\to\infty}\frac{e_{q,\Lambda}}q
    \ge E\bra{\sG\cdot\sF_q-\frac1p|\sF_q|^p}
    =\frac{\kappa_q}{q}.
\end{equation}
Combining \eqref{eq:main-limsup} and \eqref{eq:main-liminf} proves
\eqref{eq:main-normalized-limit}. 
\end{proof}

%-----------------------------------------------------
\subsection{The endpoint \texorpdfstring{$q=1$}{q=1}}
\label{sec:q-one-endpoint}

The reflexive primal--dual argument used for $q>1$ is no longer available, but the recovery statement of \cref{cor:recovery} still applies in every fixed $L^r$ space with $r>1$.  The additional ingredient needed for the dual lower bound is a sharp truncation of gradients. We first record a slightly non standard dual formula for $\E_1$.

\begin{lemma}\label{lem:qone-finite-duality}
Let $g$ be smooth and mean zero, and set
$J=-\nabla(-\Delta)^{-1}g$.  Then
\begin{equation}
    \E_1(g)
    =\sup\set{
        \int_{\T^2}J\cdot\nabla\varphi:
        \varphi\in W^{1,\infty}(\T^2),
        |\nabla\varphi|\le1\ \text{a.e.}
      }.
      \label{eq:qone-finite-dual-general}
\end{equation}
\end{lemma}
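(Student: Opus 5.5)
Both inequalities in \eqref{eq:qone-finite-dual-general} come from the Helmholtz decomposition of an admissible flux. The key observation is that $J$ is itself admissible, since $\nabla\cdot J=g$. Moreover, any admissible $H\in L^1$ differs from $J$ by a divergence-free field, and a divergence-free field is orthogonal to gradients: for Lipschitz $\varphi$,
\begin{equation*}
\int_{\T^2}(H-J)\cdot\nabla\varphi=0.
\end{equation*}
This integration by parts is justified because $\nabla\cdot(H-J)=0$ in distributions. One tests with a mollification $\varphi_\epsilon$, for which $\nabla\varphi_\epsilon\to\nabla\varphi$ weak-$*$ in $L^\infty$. Alternatively, one can write both pairings as $-\langle g,\varphi\rangle$ directly.

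\emph{Weak duality.} For admissible $H$ and $|\nabla\varphi|\le1$,
\begin{equation*}
\int|H|\ge\int H\cdot\nabla\varphi=\int J\cdot\nabla\varphi.
\end{equation*}
Hence $\E_1(g)$ is at least the supremum.

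\emph{Reverse inequality.} I would obtain it from a Hahn--Banach / Fenchel--Rockafellar argument. Consider the closed subspace
\begin{equation*}
S=\{\sigma\in L^1(\T^2;\R^2):\nabla\cdot\sigma=0\}.
\end{equation*}
Then $\E_1(g)=\operatorname{dist}_{L^1}(J,S)$, and by Hahn--Banach
\begin{equation*}
\operatorname{dist}_{L^1}(J,S)=\sup\Big\{\int J\cdot G:\ G\in L^\infty,\ \|G\|_\infty\le1,\ G\in S^\perp\Big\}.
\end{equation*}
The attainment of the dual norm is where the $L^\infty$ pairing with the Euclidean pointwise norm enters; the dual norm of $L^1(\R^2\text{-valued},|\cdot|)$ is the $L^\infty$ norm of the pointwise Euclidean norm.

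It remains to identify $S^\perp\subset L^\infty$. I claim it consists of gradients $\nabla\varphi$ with $\varphi\in W^{1,\infty}$, plus constants.

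Take $G\in S^\perp$. Testing against smooth divergence-free fields, i.e.\ $\nabla^\perp\psi$ for smooth $\psi$, gives $\operatorname{curl}G=0$ in distributions. Testing against constant fields, which lie in $S$, shows that $G$ has zero mean.

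On the torus, a curl-free field is a gradient plus its mean, by Fourier analysis: for $k\neq0$, $\widehat G(k)\parallel k$. Since the mean vanishes, $G=\nabla\varphi$ with $\varphi\in W^{1,\infty}$. Moreover $|\nabla\varphi|=|G|\le1$ a.e., so the supremum above is over exactly the class in \eqref{eq:qone-finite-dual-general}. This proves equality.

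\emph{Alternative route.} One can avoid the abstract identification by using the already established $q>1$ duality of \cref{lem:limiting-duality}'s finite analogue, \cref{prop:finite-cutoff-variational}, and letting $q\downarrow1$. The $q\downarrow1$ limit in the dual, however, requires truncating $|\nabla\varphi|$, which is exactly the delicate step. I would therefore prefer the direct Hahn--Banach proof.

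\emph{Main obstacle.} The point to be careful about is that $\E_1(g)$ is posed over $L^1$, where the infimum may not be attained. The Hahn--Banach distance formula does not need attainment, so this causes no problem. Another subtle point is that the characterization of $S^\perp$ must be carried out in $L^\infty$, which is not reflexive. This is handled entirely through distributional testing with smooth divergence-free fields, which belong to $S$, so no density of smooth fields in $S$ is required.
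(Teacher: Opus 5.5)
Your proof is correct, but it takes a different route from the paper. The paper quotes the classical Beckmann duality
\[
\E_1(g)=\sup\Big\{\int_{\T^2}g\varphi:\ |\nabla\varphi|\le 1\Big\}
\]
from Santambrogio. It then turns $\int g\varphi$ into $\int J\cdot\nabla\varphi$ by a single integration by parts, using $\nabla\cdot J=g$ and the symmetry $\varphi\mapsto-\varphi$ of the constraint set.

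You instead prove the duality from scratch. You write $\E_1(g)=\operatorname{dist}_{L^1}(J,S)$, which is the same observation the paper makes in \cref{lem:matching-quotient-stability}. You then apply the Hahn--Banach distance formula and identify $S^\perp\subset L^\infty$ with Lipschitz gradients through distributional curl-testing and Fourier analysis. This is exactly the mechanism the paper uses for the limiting constant in \cref{lem:qone-limiting-duality}, moved to the torus.

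Your version has three advantages:
\begin{itemize}
\item it is self-contained;
\item it gives attainment of the dual supremum, i.e.\ a maximizing Lipschitz potential exists;
\item it holds for any mean-zero $g$ with $J_g\in L^1$, since smoothness of $g$ is never used beyond $J\in L^1$.
\end{itemize}
The paper's citation is shorter. It also passes through the form $\int g\varphi$, which is what gets paired with Kantorovich--Rubinstein in \eqref{eq:matching-W1-Beckmann}; you would recover that form by the same integration by parts you mention.

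One small slip. You announce that $S^\perp$ consists of gradients ``plus constants.'' Your own test against constant fields, which lie in $S$, shows that every element of $S^\perp$ has zero mean. So $S^\perp$ is exactly $\{\nabla\varphi:\varphi\in W^{1,\infty}\}$. In any case, the reverse inequality only needs the inclusion $S^\perp\subseteq\{\nabla\varphi\}$; your weak-duality step supplies the other direction.
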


\begin{proof}
By the classical dual representation of the Beckmann energy $\E_1(g)$, see e.g. \cite{Santam}
\begin{equation*}
    \E_1(g) = \sup\set{
        \int_{\T^2} g \varphi:
        \varphi\in W^{1,\infty}(\T^2),
        |\nabla\varphi|\le1\ \text{a.e.}
      }.
\end{equation*}
Since $\nabla \cdot J= g$, integration by parts concludes the proof of \eqref{eq:qone-finite-dual-general}.
\end{proof}

Set
\begin{equation}\label{eq:qone-normalized-value}
    e_{1,\Lambda}
    :=\frac{E\bra{\E_1(\xi_\Lambda)}}{\sqrt{m_\Lambda}}.
\end{equation}
By \cref{lem:qone-finite-duality},
\begin{equation}\label{eq:qone-finite-dual}
    \frac{\E_1(\xi_\Lambda)}{\sqrt{m_\Lambda}}
    =\sup\set{
        \int_{\T^2}\sZ_\Lambda\cdot\nabla\varphi:
        \varphi\in W^{1,\infty}(\T^2),
        |\nabla\varphi|\le1\ \text{a.e.}
      }.
\end{equation}

Define the closed subspace
\begin{equation}\label{eq:qone-scale-solenoidal-L1}
    \mathscr P_1
    :=\overline{
        \bigcup_{r>1}\Ran_{L_1^r}\Pscale
      }^{\,L^1(\Omega;\R^2)}
\end{equation}
and the endpoint variational constant
\begin{equation}\label{eq:qone-kappa-primal}
    \kappa_1
    :=\inf_{\sV\in\mathscr P_1}E\bra{|\sG+\sV|}.
\end{equation}
The union in \eqref{eq:qone-scale-solenoidal-L1} is a linear subspace,
since the underlying probability space has finite mass and the extensions
of $\Pscale$ are consistent on intersections of $L^r$ spaces.

\begin{lemma}
\label{lem:qone-limiting-duality}
One has
\begin{equation}\label{eq:qone-kappa-dual}
    \kappa_1
    =\sup\set{
        E[\sG\cdot\sF]:
        \sF\in L^\infty(\Omega;\R^2),
        \Pscale\sF=0,
        |\sF|\le1\ \text{a.s.}
      }.
\end{equation}
Here $\Pscale\sF$ is understood through the $L^2$ realization of the
projection.  The supremum is attained.
\end{lemma}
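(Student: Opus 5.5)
The plan is to obtain \eqref{eq:qone-kappa-dual} as the distance formula for a closed subspace of a Banach space, in the pairing between $L^1(\Omega;\R^2)$ (with the Euclidean norm on $\R^2$) and its dual $L^\infty(\Omega;\R^2)$. Since $\kappa_1=\inf_{\sV\in\mathscr P_1}E\bra{|\sG-(-\sV)|}$ and $\mathscr P_1$ is a linear subspace, $\kappa_1$ is exactly $\operatorname{dist}_{L^1}(\sG,\mathscr P_1)$. I will prove weak duality first, then strong duality with attainment via Hahn--Banach.

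\emph{Weak duality.} Fix $\sF\in L^\infty$ with $|\sF|\le1$ a.s. and $\Pscale\sF=0$ in the $L^2$ sense. Let $\sV\in\Ran_{L_1^r}\Pscale$ for some $r>1$. Since $\sF\in L^{r'}$ and the extensions of $\Pscale$ are consistent on intersections of $L^r$ spaces, we also have $\Pscale\sF=0$ in $L_1^{r'}$. By \eqref{eq:scale-duality},
\[
E[\sF\cdot\sV]=E[\sF\cdot\Pscale\sV]=E[\Pscale\sF\cdot\sV]=0.
\]
The functional $\sV\mapsto E[\sF\cdot\sV]$ is continuous on $L^1$ because $\sF$ is bounded, so it vanishes on the whole closure $\mathscr P_1$. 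Hence, for every $\sV\in\mathscr P_1$,
\[
E\bra{|\sG+\sV|}\ge E[(\sG+\sV)\cdot\sF]=E[\sG\cdot\sF],
\]
which gives $\kappa_1\ge$ the supremum in \eqref{eq:qone-kappa-dual}.

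\emph{Strong duality and attainment.} Since $\mathscr P_1$ is closed in $L^1$ and $\sG\in L^1$, the Hahn--Banach distance formula provides $\ell\in(L^1)^*$ with three properties: $\|\ell\|\le1$, $\ell$ vanishes on $\mathscr P_1$, and $\ell(\sG)=\operatorname{dist}(\sG,\mathscr P_1)=\kappa_1$. If $\kappa_1=0$, take $\ell=0$.

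We identify $\ell$ as $\ell(\sX)=E[\sF\cdot\sX]$ for some $\sF\in L^\infty(\Omega;\R^2)$. The dual norm of the Euclidean $L^1$ norm is $\operatorname{ess\,sup}|\sF|$, so $\|\ell\|\le1$ means $|\sF|\le1$ a.s.

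Next, $\ell$ vanishes on $\Ran_{L_1^2}\Pscale\subset\mathscr P_1$. So $\sF\in(\Ran_{L_1^2}\Pscale)^\perp$, which by \eqref{eq:annihilator-scale} with $r=2$ equals $\Ker_{L_1^2}\Pscale$. Thus $\Pscale\sF=0$ in the $L^2$ realization. (We may take $\sF$ to be $\F_1$-measurable by conditioning, since $\sG$ and $\mathscr P_1$ live on $\F_1$.) This $\sF$ is admissible and satisfies $E[\sG\cdot\sF]=\kappa_1$, which gives equality in \eqref{eq:qone-kappa-dual} and shows the supremum is attained.

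\emph{Main point of care.} The only delicate step is the consistency of the projection across exponents. It is used twice: to say that $\Pscale\sF=0$ in the $L^2$ sense implies $\Pscale\sF=0$ in every $L^{r'}$ (weak duality), and to say that annihilating the $L^2$ range suffices in the converse. Both follow because all the extensions agree on $L^2\cap L^r$, which is dense. The existence of an $L^1$-minimizer for $\kappa_1$ is not needed, and indeed is not claimed.
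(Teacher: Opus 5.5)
Your proof is correct and follows the paper's route. Both use the Hahn--Banach distance formula in the $L^1$--$L^\infty$ pairing, and both identify the annihilator of $\mathscr P_1$ via scale duality \eqref{eq:scale-duality} in one direction and via the $L^2$ orthogonality of $\Pscale$ (equivalently \eqref{eq:annihilator-scale} at $r=2$) in the other. The only cosmetic difference is that you read off attainment from the norming functional supplied by Hahn--Banach, whereas the paper obtains it from weak-* compactness of the unit ball of $L^\infty$ and weak-* closedness of the annihilator.
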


\begin{proof}
The quantity in \eqref{eq:qone-kappa-primal} is the distance from $-\sG$
to the closed subspace $\mathscr P_1$ of $L^1$.  The Hahn--Banach distance
formula gives the supremum over the unit ball of the annihilator
$\mathscr P_1^\perp\subset L^\infty$.

We claim that
\begin{equation}\label{eq:qone-annihilator-identification}
    \mathscr P_1^\perp
    =\set{
        \sF\in L^\infty(\Omega;\R^2):
        \Pscale\sF=0
      }.
\end{equation}
If $\Pscale\sF=0$ and
$\sV\in\Ran_{L_1^r}\Pscale$ for some $r>1$, then
$\sF\in L_1^{r'}$ and scale duality \eqref{eq:scale-duality} gives
$E[\sF\cdot\sV]=0$.  Hence $\sF$ annihilates $\mathscr P_1$.
Conversely, if $\sF\in\mathscr P_1^\perp$, then
$\Pscale\sF\in\Ran_{L_1^2}\Pscale\subset\mathscr P_1$, and the
$L^2$ orthogonality of $\Pscale$ gives
\begin{equation*}
    0=E[\sF\cdot\Pscale\sF]=E\bra{|\Pscale\sF|^2}.
\end{equation*}
This proves \eqref{eq:qone-annihilator-identification} and hence
\eqref{eq:qone-kappa-dual}.

The unit ball of $L^\infty$ is weak-* compact, the annihilator is weak-*
closed, and the objective is weak-* continuous because $\sG\in L^1$.
Therefore the supremum is attained.
\end{proof}

\begin{proposition}
\label{prop:qone-continuity}
The endpoint constant satisfies
\begin{equation*}
    0<\kappa_1\le E[|\sG|]=\Gamma\pa{\frac32},\qquad \lim_{q\downarrow1}\kappa_q=\kappa_1.
\end{equation*}
\end{proposition}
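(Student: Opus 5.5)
The upper bound $\kappa_1\le E|\sG|$ follows by taking $\sV=0$ in \eqref{eq:qone-kappa-primal}. For $E|\sG|$: $\sG$ is centered Gaussian with covariance $\id/2$, so $|\sG|^2$ is exponential with mean $1$, and $E|\sG|=\Gamma(3/2)$ by the moment formula recorded after \cref{lem:scale-projections}. For positivity I will use the dual formula \eqref{eq:qone-kappa-dual} with an explicit test field. The candidate is $\sF=\sG/M$ truncated: take $\sF:=\Qscale\sG$-type fields, which is simply $\sG$ itself since $\Pscale\sG=0$. The catch is that $\sG$ is unbounded, so a bounded dual element is needed instead.

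Positivity will instead be obtained through the lower bound $\kappa_1\ge \limsup$ of the $\kappa_q$ lower bounds, or more directly by Hölder. For $\sV\in\Ran_{L^r_1}\Pscale$ with $r\in(1,2]$, we have $E[\sG\cdot(\sG+\sV)]=E|\sG|^2=1$ by \eqref{eq:scale-duality}, since $\Pscale\sG=0$. By density, and because $\sG\in L^\infty$ fails, I take $\sV\in\mathscr P_1$ approximated in $L^1$ by $\sV_n\in\Ran_{L^{r_n}_1}\Pscale$. Then I need a bounded gradient-type element. Cleaner: use $\sF_M:=\Qscale(\sG\one_{|\sG|\le M})$. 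This lies in every $L^r$ but is not bounded. So instead I prove positivity via continuity: $\kappa_q\ge\Gamma(1+p/2)^{1-q}$ from \cref{lem:kappaqestimates}. As $q\downarrow1$ we have $p\to\infty$, and this bound degenerates, so it is not useful. I therefore rely on a direct argument. Suppose $\kappa_1=0$, so that $\sV_n\to-\sG$ in $L^1$ with $\sV_n\in\Ran\Pscale$. Projecting onto the first Wiener chaos, which is $L^1$-continuous on chaos-bounded pieces by hypercontractivity of the conditional-projection onto the first chaos (the first-chaos projection is bounded on $L^r$, $r>1$), and noting that $\Pscale$ commutes with chaos projections by \cref{lem:chaos-scale-projection}, would give first-chaos components in $\Ran\Pscale$ converging to $-\sG$. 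That contradicts $\Qscale\sG=\sG\neq0$. The delicate point is that chaos projection is not $L^1$-bounded. I will handle this by exhibiting a bounded annihilator directly: take $\sF=\Qscale h(\sG)$ for a bounded odd $h$ and show it is bounded. That is plausible only if $\Qscale$ preserves boundedness on functions of $\sG$, which I would verify via Itô: for $h(\sG)$ the martingale integrand has direction $\nabla h$ applied to $e_\theta$, which is not automatically longitudinal. This is the main obstacle, and I expect the paper to use the sharp truncation theorem for gradients announced earlier at this step.

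For the limit, the upper semicontinuity $\limsup_{q\downarrow1}\kappa_q\le\kappa_1$ goes as follows. Given $\epsilon$, choose $\sV\in\Ran_{L^r_1}\Pscale$ for some $r>1$ with $E|\sG+\sV|\le\kappa_1+\epsilon$. Then for $q\in(1,r)$ we have $\kappa_q\le E|\sG+\sV|^q\to E|\sG+\sV|$ by dominated convergence, since $\sG+\sV\in L^r$. For the lower bound, Hölder gives $\kappa_q=E|\sU_q|^q\ge (E|\sU_q|)^q\ge \kappa_1^q$, because $\sU_q-\sG\in\Ran_{L^q_1}\Pscale\subset\mathscr P_1$. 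Hence $\liminf_{q\downarrow 1}\kappa_q\ge\kappa_1$. Continuity is therefore straightforward, and positivity of $\kappa_1$ is the real difficulty. I would try to prove it via a bounded dual element constructed from a truncated gradient, i.e. the sharp truncation lemma for $\Qscale$-fields.
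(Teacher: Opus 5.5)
Your arguments for $\kappa_1\le E[|\sG|]=\Gamma(3/2)$ and for $\lim_{q\downarrow1}\kappa_q=\kappa_1$ are correct and coincide with the paper's: Jensen gives the $\liminf$, and an $L^r$ competitor with dominated convergence gives the $\limsup$.

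The genuine gap is the positivity of $\kappa_1$. None of the routes you sketch is completed. The tool you finally defer to, the sharp truncation lemma, is not the relevant one: it concerns spatial gradients in $W^{1,r}(\T^2)$ and is used only for the finite-cutoff lower bound in \cref{thm:qone-ultraviolet1}. By \eqref{eq:qone-kappa-dual}, all you need is one $\sF$ with $|\sF|\le1$ a.s., $\Pscale\sF=0$ and $E[\sG\cdot\sF]>0$. Your candidates fail because they modify the \emph{terminal value} ($\sG\one_{\set{|\sG|\le M}}$, $h(\sG)$). As you yourself observe, this destroys the longitudinal structure of the martingale integrand. The chaos-projection argument fails for the reason you give, namely the lack of $L^1$ boundedness.

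The missing idea is to \emph{stop} rather than truncate. Put $\sG_t:=\int_0^t\int_0^\pi e_\theta\,\sw^1(ds,d\theta)$, $\tau:=\inf\set{t\ge0:|\sG_t|=1}\wedge1$, and $\sF_0:=\sG_\tau$.

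Path continuity gives $|\sF_0|\le1$. Moreover $\sF_0=\int_0^1\int_0^\pi\one_{\set{s\le\tau}}e_\theta\,\sw^1(ds,d\theta)$ has a predictable integrand that is still a scalar multiple of $e_\theta$. Hence $P_\theta$ annihilates it, and $\Pscale\sF_0=0$ by \eqref{eq:scale-solenoidal}.

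The It\^o isometry, with unit quadratic variation per unit time since $\int_0^\pi|e_\theta|^2\,d\theta/\pi=1$, gives $E[\sG\cdot\sF_0]=E[|\sF_0|^2]=E[\tau]$. Equivalently, this follows from orthogonality of the post-$\tau$ increment. This quantity is positive because $\tau>0$ almost surely. Hence $\kappa_1\ge E[\tau]>0$.

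In primal form, \eqref{eq:scale-duality} and $L^1$-density give $E[|\sG+\sV|]\ge E[\sF_0\cdot(\sG+\sV)]=E[\tau]$ for every $\sV\in\mathscr P_1$. This is the quantitative version of the contradiction you were trying to reach, obtained without any chaos projection.
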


\begin{proof}
The upper bound follows by taking $\sV=0$.  For positivity, define
\begin{equation*}
    \sG_t:=\int_0^t\int_0^\pi
       e_\theta\,\sw^1(ds,d\theta),
    \qquad
    \tau:=\inf\set{t\ge0:|\sG_t|=1}\wedge1,
    \qquad
    \sF_0:=\sG_\tau.
\end{equation*}
Then $|\sF_0|\le1$ and
\begin{equation*}
    \sF_0
    =\int_0^1\int_0^\pi
      \one_{\{s\le\tau\}}e_\theta\,\sw^1(ds,d\theta),
\end{equation*}
so $\Pscale\sF_0=0$.  Orthogonality of the post-$\tau$ increment gives
\begin{equation*}
    E[\sG\cdot\sF_0]=E\bra{|\sF_0|^2}=E[\tau]>0.
\end{equation*}
The dual formula \eqref{eq:qone-kappa-dual} proves positivity.

Let $q>1$.  Every element of $\Ran_{L_1^q}\Pscale$ belongs to
$\mathscr P_1$, and Jensen's inequality therefore gives
\begin{equation*}
    \kappa_q
    =\inf_{\sV\in\Ran_{L_1^q}\Pscale}E\bra{|\sG+\sV|^q}
    \ge \kappa_1^q.
\end{equation*}
Thus $\liminf_{q\downarrow1}\kappa_q\ge\kappa_1$.  Conversely, fix
$\eta>0$.  By the definition of $\mathscr P_1$, there are $r>1$ and
$\sV\in\Ran_{L_1^r}\Pscale$ such that
\begin{equation*}
    E\bra{|\sG+\sV|}\le\kappa_1+\eta.
\end{equation*}
For $1<q<r$, this $\sV$ is admissible in the definition of $\kappa_q$, and
dominated convergence gives
\begin{equation*}
    \limsup_{q\downarrow1}\kappa_q
    \le\lim_{q\downarrow1}E\bra{|\sG+\sV|^q}
    =E\bra{|\sG+\sV|}
    \le\kappa_1+\eta.
\end{equation*}
Letting $\eta\downarrow0$ concludes the proof.
\end{proof}

The following random periodic form of M\"uller's sharp truncation theorem is the endpoint substitute for the finite-$p$ dual recovery used in \cref{thm:main}.

\begin{lemma}
\label{lem:qone-sharp-truncation}
Fix $r>2$.  Let
$h_n:\Omega\to W^{1,r}(\T^2)$ be strongly measurable and suppose that
\begin{align}
    E\bra{\int_{\T^2}
      \operatorname{dist}\pa{\nabla h_n,\overline B_1(0)}^r}
      &\longrightarrow0,
      \label{eq:qone-trunc-hyp-distance}
    \\
    \sup_n E\bra{\int_{\T^2}|\nabla h_n|^r}&<\infty.
      \label{eq:qone-trunc-hyp-moment}
\end{align}
Then there are strongly measurable $W^{1,r}(\T^2)$-valued random
periodic Lipschitz functions $\psi_n$ and deterministic numbers
$\varepsilon_n\downarrow0$ such that
\begin{align}
    |\nabla\psi_n|&\le1+\varepsilon_n
      \quad\text{a.e. on }\Omega\times\T^2,
      \label{eq:qone-trunc-uniform}
    \\
    (P\otimes dx)\set{\psi_n\ne h_n}&\longrightarrow0,
      \label{eq:qone-trunc-small-set}
    \\
    E\bra{\int_{\T^2}|\nabla\psi_n-\nabla h_n|^2}&\longrightarrow0.
      \label{eq:qone-trunc-L2}
\end{align}
Consequently, with
\begin{equation}\label{eq:qone-rescaled-potential}
    \widehat\psi_n:=\frac{\psi_n}{1+\varepsilon_n},
\end{equation}
one has $|\nabla\widehat\psi_n|\le1$ and
\begin{equation}\label{eq:qone-rescaled-L2}
    E\bra{\int_{\T^2}
      |\nabla\widehat\psi_n-\nabla h_n|^2}
      \longrightarrow0.
\end{equation}
\end{lemma}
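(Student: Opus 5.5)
The plan is to apply Müller's sharp truncation theorem (the Lipschitz truncation of gradients close to a convex set, in the form: if $\operatorname{dist}(\nabla h,K)\in L^r$ with $K$ compact convex, then there is $\psi$ with $\nabla\psi\in K+B_\epsilon$ agreeing with $h$ off a small set) sample by sample. We would control the random quantities through the deterministic quantitative bounds that the truncation provides. Concretely, we will use the maximal-function version on the torus. For $h\in W^{1,r}(\T^2)$ with $r>2$ and a level $\lambda\ge1$, consider the bad set
\[
 U_\lambda:=\set{x:\ M\pa{\operatorname{dist}(\nabla h,\overline B_1(0))}(x)>\lambda-1}\cup\set{x:\ M(|\nabla h|)(x)>\Lambda_0},
\]
where $M$ is the Hardy--Littlewood maximal function on $\T^2$. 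On the complement of $U_\lambda$, $h$ is Lipschitz in a suitable sense. The standard truncation (Whitney extension from the good set, which is closed) gives $\psi$ with $\psi=h$ off $U_\lambda$ and $|\nabla\psi|\le C\lambda$. Müller's sharpness refinement, which exploits convexity of the ball and Jensen/averaging on Whitney cubes, improves the bound on the gradient to $|\nabla\psi|\le 1+C(\lambda-1)$ plus a term controlled by the distance function. Since this construction is explicit (maximal functions, a fixed Whitney decomposition, averages), it depends measurably on $h$. This gives strong measurability of $\psi_n$ as a $W^{1,r}$-valued map, for instance via continuity of the construction after mollifying the cutoff in $\lambda$, or via a Kuratowski--Ryll-Nardzewski selection if needed.

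The key steps are as follows. \textbf{(i)} Set $d_n:=\operatorname{dist}(\nabla h_n,\overline B_1(0))$ and $\eta_n:=E\int d_n^r\to0$. Choose deterministic levels $\varepsilon_n\downarrow0$ with $\varepsilon_n^{-r}\eta_n\to0$, for instance $\varepsilon_n=\eta_n^{1/(2r)}$. \textbf{(ii)} Apply the sharp truncation to each sample $h_n(\omega)$ with threshold $\varepsilon_n$. The truncated function $\psi_n$ satisfies $|\nabla\psi_n|\le1+\varepsilon_n$ everywhere, giving \eqref{eq:qone-trunc-uniform}, and $\psi_n=h_n$ outside a set $U_n(\omega)$. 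The weak-type maximal inequality gives $|U_n(\omega)|\le C\varepsilon_n^{-r}\int d_n^r$, so after taking expectations $(P\otimes dx)(U_n)\le C\varepsilon_n^{-r}\eta_n\to0$, which is \eqref{eq:qone-trunc-small-set}. \textbf{(iii)} For \eqref{eq:qone-trunc-L2}, note that $\nabla\psi_n-\nabla h_n$ vanishes off $U_n$, so
\[
E\int|\nabla\psi_n-\nabla h_n|^2\le 2E\int_{U_n}\pa{(1+\varepsilon_n)^2+|\nabla h_n|^2}.
\]
The first term tends to zero by (ii). The second is bounded by Hölder with exponent $r/2>1$, namely $\pa{E\int|\nabla h_n|^r}^{2/r}(P\otimes dx)(U_n)^{1-2/r}$, which tends to zero by \eqref{eq:qone-trunc-hyp-moment}. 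This uniform integrability is exactly where $r>2$ is used. \textbf{(iv)} For the rescaled functions, $|\nabla\widehat\psi_n|\le1$ follows immediately. Moreover, $\nabla\widehat\psi_n-\nabla\psi_n=-\frac{\varepsilon_n}{1+\varepsilon_n}\nabla\psi_n$ is pointwise bounded by $\varepsilon_n$, so its $L^2(P\otimes dx)$ norm tends to zero. Combining this with (iii) proves \eqref{eq:qone-rescaled-L2}.

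The main obstacle is step (ii): obtaining the sharp constant $1+\varepsilon_n$ rather than $C$ from the truncation. Ordinary Lipschitz truncation only yields $|\nabla\psi|\le C\lambda$, and this is insufficient here. My first attempt would be to invoke Müller's theorem (sharp truncation relative to a convex compact set $K$: the gradient lands in a neighbourhood of $K$, with the exceptional set controlled by $\int\operatorname{dist}(\nabla h,K)^r$). I would apply it in periodic form by working on $[-1,2]^2$ with the periodic extension and then a periodic cutoff/gluing. Failing a direct citation, I would reproduce its proof by a Whitney extension in which, on each Whitney cube, $\psi$ is defined as an affine interpolation of averages of $h$. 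Convexity of $\overline B_1$ and Jensen's inequality then ensure that the averaged gradients lie within $\varepsilon$ of the ball. The second obstacle is measurability in $\omega$, which I expect to be routine given the explicit construction.
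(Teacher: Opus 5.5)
Your steps (iii) and (iv) are the paper's endgame: H\"older with exponent $r/2$ on the exceptional set, which is where $r>2$ enters, followed by rescaling by $1+\varepsilon_n$. The gap is step (ii), which carries all the content of the lemma. You assert a deterministic, per-sample sharp truncation with $|\nabla\psi|\le 1+\varepsilon$ and an exceptional set bounded \emph{linearly} by $C\varepsilon^{-r}\int_{\T^2}\operatorname{dist}(\nabla h,\overline B_1(0))^r$. Neither justification you give delivers this.

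Müller's theorem is qualitative. It is a statement about sequences: if $u_j\rightharpoonup u_0$ and $\operatorname{dist}(\nabla u_j,K)\to0$ in $L^1$, there are $v_j$ with $\|\operatorname{dist}(\nabla v_j,K)\|_{L^\infty}\to0$ and $|\{u_j\ne v_j\}|\to0$. It gives no rate in terms of $\int\operatorname{dist}$.

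Your fallback does not produce the constant $1+\varepsilon$ either. The gradient of a Whitney-type extension $\sum_i\phi_i c_i$ is $\sum_i\nabla\phi_i\,(c_i-c_{i_0})$. This is not a convex combination of local gradients of $h$, so convexity of $\overline B_1(0)$ and Jensen cannot be invoked, and one only gets $C(1+\varepsilon)$. This is why Whitney-type Lipschitz truncation gives $C\lambda$ rather than $\lambda$. A cutoff-based periodic gluing would lose a constant for the same reason. Measurability is also not routine if the truncation comes from a cited existence result rather than an explicit formula.

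The paper avoids needing any rate. By contradiction and compactness it extracts a modulus from Müller's theorem: Poincar\'e, Rellich, a weak-$*$ limit with $|\nabla u_0|\le1$, Müller on the fundamental square with $v_j=u_0$ near the boundary, and periodic extension. The result is that for every $\eta>0$ there is $\delta(\eta)>0$ such that $\int_{\T^2}\operatorname{dist}(\nabla h,\overline B_1(0))\le\delta(\eta)$ yields a periodic $\psi$ with $|\nabla\psi|\le1+\eta$ and $|\{\psi\ne h\}|\le\eta$. The paper then truncates only on the event $\{D_n\le\delta_k\}$ and sets $\psi_n=0$ on the complement. It bounds the probability of the complement by Markov, chooses $\varepsilon_n=\eta_k$ diagonally, and gets measurability from the Jankov--von Neumann theorem.

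If you prefer your quantitative route, it can be repaired without Whitney cubes. Let $d=\operatorname{dist}(\nabla h,\overline B_1(0))$, let $M$ be the maximal function with radii at most $2$ of the periodic extension, and take $x,y\in G:=\{Md\le\varepsilon^2\}$.
\begin{itemize}
\item Compare $h(x)$ and $h(y)$ with their averages on balls of radius $\varepsilon|x-y|$. The error is at most $C\varepsilon|x-y|\,M|\nabla h|\le C\varepsilon|x-y|(1+Md)$.
\item Integrate $\nabla h=\Pi_{\overline B_1(0)}\nabla h+e$, with $|e|=d$, along segments joining the two balls. The projected part costs at most $(1+2\varepsilon)|x-y|$, and the $e$-part costs at most $C\varepsilon^{-1}|x-y|\,Md(x)$.
\item Together these give $|h(x)-h(y)|\le(1+C\varepsilon)|x-y|$ on $G$.
\end{itemize}
The McShane extension $\inf_{y\in G}\{h(y)+(1+C\varepsilon)\,d_{\T^2}(\cdot,y)\}$ preserves this constant exactly for scalar functions. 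The exceptional set satisfies $|\T^2\setminus G|\le C\varepsilon^{-2}\int_{\T^2}d$. This is a worse power than you claimed, which is harmless once you adjust the choice of $\varepsilon_n$.
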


\begin{proof}
We first derive a deterministic periodic consequence of  \cite[Theorem~4]{Muller99}.  For every $\eta>0$ there is
$\delta(\eta)>0$ such that every periodic $h\in W^{1,1}(\T^2)$ satisfying
\begin{equation}\label{eq:qone-deterministic-smallness}
    \int_{\T^2}
      \operatorname{dist}\pa{\nabla h,\overline B_1(0)}
      \le\delta(\eta)
\end{equation}
admits a periodic $\psi\in W^{1,\infty}(\T^2)$ with
\begin{equation}\label{eq:qone-deterministic-truncation}
    |\nabla\psi|\le1+\eta\quad\text{a.e.},
    \qquad
    |\set{\psi\ne h}|\le\eta.
\end{equation}

Suppose otherwise.  Then, for some $\eta_0>0$, there are periodic
$u_j\in W^{1,1}(\T^2)$ with
\begin{equation*}
    \int_{\T^2}
      \operatorname{dist}\pa{\nabla u_j,\overline B_1(0)}
      \longrightarrow0
\end{equation*}
for which no function satisfying
\eqref{eq:qone-deterministic-truncation} with $\eta=\eta_0$ exists.
Subtracting the mean, write
\begin{equation*}
    \nabla u_j=a_j+e_j,
    \qquad
    a_j:=\Pi_{\overline B_1(0)}(\nabla u_j),
    \qquad
    \|e_j\|_{L^1}\longrightarrow0.
\end{equation*}
The periodic Poincar\'e inequality gives a uniform $W^{1,1}$ bound.
After passing to a subsequence, Rellich compactness and weak-* compactness in
$L^\infty$ give
\begin{equation*}
    u_j\longrightarrow u_0\quad\text{in }L^1(\T^2),
    \qquad
    a_j\overset{*}{\rightharpoonup}a\quad\text{in }L^\infty.
\end{equation*}
Since $e_j\to0$ in $L^1$, one has $\nabla u_0=a$ distributionally.  The
closed convex unit ball is weak-* stable, so $|\nabla u_0|\le1$ almost
everywhere.  Apply M\"uller's theorem on the open fundamental square.  Using
its finite-volume conclusion, it gives functions $v_j$ that equal $u_0$
near the boundary and satisfy
\begin{equation*}
    \big\|\operatorname{dist}
       \pa{\nabla v_j,\overline B_1(0)}\big\|_{L^\infty}
       \longrightarrow0,
    \qquad
    |\set{u_j\ne v_j}|\longrightarrow0.
\end{equation*}
Because $u_0$ is periodic, each $v_j$ extends periodically across the
boundary.  For large $j$ this contradicts the defining property of $u_j$,
and proves the deterministic assertion.

We next make the choice in
\eqref{eq:qone-deterministic-truncation} measurable.  Since $r>2$, the
space $W^{1,r}(\T^2)$ is separable and embeds continuously into
$C(\T^2)$.  For fixed $\eta$, the set
\begin{equation*}
    \Gamma_\eta:=\set{(h,\psi)\in W^{1,r}\times W^{1,r}:
       \|\nabla\psi\|_{L^\infty}\le1+\eta,\ 
       |\set{h\ne\psi}|\le\eta}
\end{equation*}
is Borel.  The first constraint is closed in $W^{1,r}$, while the second is
Borel because
\begin{equation*}
    |\set{h\ne\psi}|
    =\lim_{m\to\infty}\int_{\T^2}
       \min\set{1,m|h-\psi|}
\end{equation*}
and $W^{1,r}\hookrightarrow C$.  On the Borel set defined by
\eqref{eq:qone-deterministic-smallness}, the sections of $\Gamma_\eta$ are nonempty.  The Jankov--von Neumann selection theorem
\cite[Theorem~18.1]{Kechris95} therefore gives a universally measurable choice.  Composing it with a strongly measurable
$W^{1,r}$-valued random variable, and using that $\F$ is complete, gives a strongly measurable $W^{1,r}$-valued selection.

Set
\begin{equation*}
    D_n(\omega):=\int_{\T^2}
       \operatorname{dist}\pa{\nabla h_n(\omega),\overline B_1(0)},
    \qquad
    b_n:=E [D_n].
\end{equation*}
By H\"older's inequality and \eqref{eq:qone-trunc-hyp-distance}, $b_n\to0$. For $k\ge1$, put $\eta_k=k^{-1}$ and choose
$\delta_k:=\delta(\eta_k)$.  Choose strictly increasing integers $N_k$ so that
\begin{equation*}
    b_n\le\frac{\delta_k}{k}
    \qquad\text{for every }n\ge N_k.
\end{equation*}
For $N_k\le n<N_{k+1}$, set $\varepsilon_n=\eta_k$.  On the event
$\set{D_n\le\delta_k}$ choose $\psi_n$ by
\eqref{eq:qone-deterministic-truncation}; on its complement take
$\psi_n=0$.  For the finitely many indices $n<N_1$, take $\psi_n=0$ and
$\varepsilon_n=1$.  Markov's inequality gives
\begin{equation*}
    P[D_n>\delta_k]\le\frac1k.
\end{equation*}
Consequently, with
$A_n:=\set{(\omega,x):\psi_n(\omega,x)\ne h_n(\omega,x)}$, one has
\begin{equation*}
    (P\otimes dx)(A_n)
    \le\eta_k+P[D_n>\delta_k]
    \le\frac2k\longrightarrow0,
\end{equation*}
and $|\nabla\psi_n|\le1+\varepsilon_n$ outside a null set.

Sobolev gradients agree almost everywhere on $A_n^c$.  Hence
\begin{align*}
    E\bra{\int_{\T^2}|\nabla\psi_n-\nabla h_n|^2}
    &\le
      2E\bra{\int_{A_n}|\nabla h_n|^2}
      +2(1+\varepsilon_n)^2(P\otimes dx)(A_n).
\end{align*}
The second term tends to zero.  H\"older's inequality on
$\Omega\times\T^2$ gives
\begin{equation*}
    E\bra{\int_{A_n}|\nabla h_n|^2}
    \le
    \left(E\bra{\int_{\T^2}|\nabla h_n|^r}\right)^{2/r}
    (P\otimes dx)(A_n)^{1-2/r},
\end{equation*}
which tends to zero by \eqref{eq:qone-trunc-hyp-moment}.  This proves
\eqref{eq:qone-trunc-L2}.  Finally,
\begin{equation*}
    |\nabla\widehat\psi_n-\nabla\psi_n|
    \le\varepsilon_n
\end{equation*}
almost everywhere, so \eqref{eq:qone-rescaled-L2} follows.
\end{proof}

\begin{theorem}\label{thm:qone-ultraviolet1}
One has
\begin{equation}\label{eq:qone-main-limit1}
    \lim_{\Lambda\to\infty}
    \frac{E\bra{\E_1(\xi_\Lambda)}}{\sqrt{m_\Lambda}}
    =\kappa_1.
\end{equation}
Consequently,
\begin{equation}\label{eq:qone-main-log1}
    E\bra{\E_1(\xi_\Lambda)}
    =\frac{\kappa_1}{\sqrt{2\pi}}\sqrt{\log\Lambda}
     +o\pa{\sqrt{\log\Lambda}}.
\end{equation}
\end{theorem}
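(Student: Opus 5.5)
We prove \eqref{eq:qone-main-limit1} through matching upper and lower bounds for $e_{1,\Lambda}$. The logarithmic form \eqref{eq:qone-main-log1} then follows from $m_\Lambda\sim\log\Lambda/(2\pi)$.

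\emph{Upper bound.} We reduce to the primal recovery of \cref{cor:recovery}. Fix $\eta>0$. By the definition \eqref{eq:qone-scale-solenoidal-L1} of $\mathscr P_1$ there are $r>1$ and $\sV\in\Ran_{L_1^r}\Pscale$ with $E[|\sG+\sV|]\le\kappa_1+\eta$. Let $\sV_\Lambda$ be the stationary solenoidal recovery fields from \cref{cor:recovery} in $L^r$. The field $\sJ_\Lambda+\sqrt{m_\Lambda}\sV_\Lambda$ is admissible for $\E_1(\xi_\Lambda)$, so by stationarity
\[
e_{1,\Lambda}\le E\bra{|\sZ_\Lambda(0)+\sV_\Lambda(0)|}.
\]
The family $|\sZ_\Lambda(0)+\sV_\Lambda(0)|$ is uniformly integrable, since its $r$-th moments converge by \cref{cor:recovery}. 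Together with the convergence in law, this makes the right-hand side tend to $E[|\sG+\sV|]$. Letting $\eta\downarrow0$ gives $\limsup_\Lambda e_{1,\Lambda}\le\kappa_1$. Alternatively, one could compare with $q>1$ via Jensen and \cref{prop:qone-continuity}.

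\emph{Lower bound.} We use the dual formula \eqref{eq:qone-finite-dual} and must build competitors $\varphi$ with $|\nabla\varphi|\le1$. Let $\sF$ be a maximizer in \cref{lem:qone-limiting-duality}, so $|\sF|\le1$, $\Pscale\sF=0$ and $E[\sG\cdot\sF]=\kappa_1$. Then $\sF\in\Ran_{L_1^r}\Qscale$ for every $r$. Fix $r>2$ and take the gradient recovery fields $\sF_\Lambda=\nabla h_\Lambda$ from \cref{cor:recovery}(2). These $h_\Lambda$ are random trigonometric polynomials, hence strongly measurable in $W^{1,r}$, and they satisfy $E[\int|\nabla h_\Lambda|^r]\to E[|\sF|^r]$, which gives \eqref{eq:qone-trunc-hyp-moment}.

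The main obstacle is hypothesis \eqref{eq:qone-trunc-hyp-distance}: \cref{cor:recovery} only gives convergence in law. The law of $\sF_\Lambda(0)$ converges to that of $\sF$, which is supported in $\overline B_1(0)$. The function $z\mapsto\operatorname{dist}(z,\overline B_1)^r$ is continuous with growth $|z|^r$, and the fields $\sF_\Lambda(0)$ have uniformly bounded $r'$-th moments for some $r'>r$ (run the recovery in $L^{r'}$, which is allowed because $\sF\in L^\infty$). Uniform integrability and stationarity therefore give
\[
E\bra{\int\operatorname{dist}(\nabla h_\Lambda,\overline B_1)^r}=E\bra{\operatorname{dist}(\sF_\Lambda(0),\overline B_1)^r}\to E\bra{\operatorname{dist}(\sF,\overline B_1)^r}=0.
\]
One technical point is that the recovery must be arranged so that the moment convergence holds simultaneously in $L^{r'}$. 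This is possible because the diagonal argument of \cref{cor:recovery} handles finitely many moment conditions at once.

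We then apply \cref{lem:qone-sharp-truncation} along any sequence $\Lambda_n\to\infty$. This yields $\widehat\psi_n$ with $|\nabla\widehat\psi_n|\le1$ and $E\bra{\int|\nabla\widehat\psi_n-\nabla h_{\Lambda_n}|^2}\to0$. Since $E[\int|\sZ_\Lambda|^2]=1$, Cauchy--Schwarz and \eqref{eq:qone-finite-dual} give
\[
e_{1,\Lambda_n}\ge E\bra{\int\sZ_{\Lambda_n}\cdot\nabla\widehat\psi_n}= E[\sZ_{\Lambda_n}(0)\cdot\sF_{\Lambda_n}(0)]+o(1).
\]
The right-hand side tends to $E[\sG\cdot\sF]=\kappa_1$ by \cref{cor:recovery}. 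The sequence $\Lambda_n$ was arbitrary, so $\liminf_\Lambda e_{1,\Lambda}\ge\kappa_1$. Measurability of $\sup$ in \eqref{eq:qone-finite-dual} is not an issue, because we only lower bound it by the measurable quantities $\int\sZ_\Lambda\cdot\nabla\widehat\psi_n$.
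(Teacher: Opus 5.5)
Your proof is correct, but it takes a different route from the paper's. The paper never works with the limiting $q=1$ problem in this proof.

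For the upper bound, the paper picks a diagonal sequence $q_\Lambda\downarrow1$ and uses H\"older: $e_{1,\Lambda}\le e_\Lambda^{1/q_\Lambda}$. Here $e_\Lambda$ is the normalized $q_\Lambda$-energy, which tends to $\kappa_1$ by \cref{thm:main} and \cref{prop:qone-continuity}.

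For the lower bound, the paper takes the finite-cutoff Euler--Lagrange fields $A_{q_\Lambda}(\sH_{q_\Lambda,\Lambda}/\sqrt{m_\Lambda})$. These are exact mean-zero gradients, and both their pairing with $\sZ_\Lambda$ and their $p_\Lambda$-th moment equal $e_\Lambda$. Since $p_\Lambda\to\infty$, this moment bound alone forces $\operatorname{dist}(\cdot,\overline B_1(0))\to0$ in $L^r$. \Cref{lem:qone-sharp-truncation} is then applied to these fields.

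You instead transcribe the primal--dual proof of \cref{thm:main} directly to the endpoint. For the upper bound you recover a near-minimizer in $\mathscr P_1$ via \cref{cor:recovery}. For the lower bound you recover, as gradients, the attained bounded dual maximizer of \cref{lem:qone-limiting-duality}. You get the distance hypothesis from convergence in law to a law supported in $\overline B_1(0)$, plus uniform integrability.

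Your route is self-contained at $q=1$. It needs neither \cref{thm:main}, nor the Monge--Amp\`ere theory, nor continuity of $\kappa_q$ in $q$. It does rely on the Hahn--Banach duality and on attainment in \cref{lem:qone-limiting-duality}. The paper's route needs neither the limiting dual maximizer nor its recovery; it obtains the endpoint entirely from the $q>1$ theory. As a by-product, it shows that the finite-cutoff $q_\Lambda$-potentials are themselves near-optimal Kantorovich potentials.

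One minor simplification: you do not need to rerun the recovery in $L^{r'}$. Convergence in law together with convergence of the $r$-th moments already makes $|\sF_\Lambda(0)|^r$ uniformly integrable, and hence also $\operatorname{dist}(\sF_\Lambda(0),\overline B_1(0))^r$.
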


\begin{proof}

For $q> 1$, set 

\begin{equation*}
\sU_{q,\Lambda}
    :=\frac{\sH_{q,\Lambda}}{\sqrt{m_\Lambda}}.\end{equation*}

By \cref{thm:main}

\begin{equation}\label{eq:qone-fixed-q-energy}
    E\bra{\int_{\T^2}|\sU_{q,\Lambda}|^q}
    \longrightarrow\kappa_q
    \qquad\text{as }\Lambda\to\infty
\end{equation}
for every fixed $q>1$. Furthermore, \cref{prop:qone-continuity} gives

\begin{equation}
    \label{eq:qone-kappa-continuity-used}
    \kappa_q\longrightarrow\kappa_1
    \qquad\text{as }q\downarrow1.
\end{equation}

We choose a strictly decreasing sequence $q_j \downarrow 1$ such that 

\begin{equation*}
 |\kappa_{q_j}-\kappa_1|\le\frac1j.
\end{equation*}

By \eqref{eq:qone-fixed-q-energy}, there are strictly increasing $\Lambda_j \uparrow \infty$ such that

\begin{equation} \label{eq:qone-diagonal-choice}
    \left|
        E\bra{\int_{\T^2}|\sU_{q_j,\Lambda}|^{q_j}}
        -\kappa_{q_j}
    \right|
    \le\frac1j
    \qquad\text{for every }\Lambda\ge\Lambda_j.
\end{equation}

For $\Lambda_j\le\Lambda<\Lambda_{j+1}$, set
\begin{equation*}
    q_\Lambda:=q_j,
    \qquad
    p_\Lambda:=\frac{q_\Lambda}{q_\Lambda-1},
    \qquad
    \sU_\Lambda:=\sU_{q_\Lambda,\Lambda},
\end{equation*}

and define $q_\Lambda$ arbitrarily for $\Lambda < \Lambda _1$. Then
 
\begin{equation} \label{eq:qone-diagonal-energy}
q_\Lambda \longrightarrow 1,
\qquad
p_\Lambda \longrightarrow \infty,
\qquad
e_\Lambda := E\bra{\int_{\T^2} | \sU_\Lambda|^{q_\Lambda}}
\longrightarrow \kappa_1.
\end{equation}

\textit{Upper bound.} By the homogeneity of the Beckmann
value,
\begin{equation*}
    \frac{\E_1(\xi_\Lambda)}{\sqrt{m_\Lambda}}
    =\E_1\pa{\frac{\xi_\Lambda}{\sqrt{m_\Lambda}}}.
\end{equation*}
The field $\sU_\Lambda$ satisfies
\begin{equation*}
    \nabla\cdot\sU_\Lambda
    =\frac{\xi_\Lambda}{\sqrt{m_\Lambda}},
\end{equation*}
and is therefore admissible in the $L^1$ formulation from
Lemma~\ref{lem:qone-finite-duality}. Consequently,
\begin{equation*}
    \frac{\E_1(\xi_\Lambda)}{\sqrt{m_\Lambda}}
    \le\int_{\T^2}|\sU_\Lambda|
\end{equation*}
sample by sample.  Since $P\otimes dx$ is a probability measure,
H\"older's inequality gives

\begin{align}
    e_{1,\Lambda}
    &\le E\bra{\int_{\T^2}|\sU_\Lambda|}
    \le
    \pa{
        E\bra{\int_{\T^2}|\sU_\Lambda|^{q_\Lambda}}
    }^{1/q_\Lambda}
    = e_\Lambda^{1/q_\Lambda}.
    \label{eq:qone-primal-comparison}
\end{align}

It follows from \eqref{eq:qone-diagonal-energy} that 
\begin{equation} \label{eq:qone-limsup}
\limsup_{\Lambda\to\infty} e_{1,\Lambda} \le \kappa_1.
\end{equation}

\textit{Lower bound.} By \cref{prop:finite-cutoff-variational}, 
\begin{equation}\label{eq:qone-euler-gradient}
    \sF_\Lambda
    :=A_{q_\Lambda}(\sU_\Lambda)
    =|\sU_\Lambda|^{q_\Lambda-2}\sU_\Lambda
\end{equation}
is a mean-zero periodic gradient.
This is also the field selected by equality in the conjugate-power Fenchel inequality. In particular, 

\begin{equation} \label{eq:qone-euler-identities}
\sU_\Lambda\cdot\sF_\Lambda
    =|\sU_\Lambda|^{q_\Lambda},
    \quad
    |\sF_\Lambda|^{p_\Lambda}
    =|\sU_\Lambda|^{q_\Lambda}.
\end{equation}

Since
\begin{equation*}
\nabla \cdot \sU_\Lambda 
= 
\nabla \cdot \sZ_\Lambda
\end{equation*}
the difference $\sU_\Lambda - \sZ_\Lambda$ is divergence-free. As $\sF_\Lambda$ is a periodic gradient,
\begin{equation*}
    \int_{\T^2} \sZ_\Lambda\cdot\ \sF_\Lambda
    =
    \int_{\T^2}\sU_ \Lambda\cdot \sF_\Lambda.
\end{equation*}
Combining with \eqref{eq:qone-euler-identities}, this yields 

\begin{align}
    E\bra{\int_{\T^2}\sZ_\Lambda\cdot\sF_\Lambda}
    &=e_\Lambda,
    \label{eq:qone-euler-pairing}
    \\
    E\bra{\int_{\T^2}|\sF_\Lambda|^{p_\Lambda}}
    &=e_\Lambda.
    \label{eq:qone-euler-moment}
\end{align}
Thus \eqref{eq:qone-diagonal-energy} gives
\begin{equation}\label{eq:qone-euler-limits}
    E\bra{\int_{\T^2}\sZ_\Lambda\cdot\sF_\Lambda}
    \longrightarrow\kappa_1,
    \qquad
    E\bra{\int_{\T^2}|\sF_\Lambda|^{p_\Lambda}}
    \longrightarrow\kappa_1.
\end{equation}

It then remains to replace $\sF_\Lambda$ by an exactly unit-bounded gradient. Let $\Lambda_n \to \infty$ be arbitrary and fix $r > 2$. Set

\begin{equation*}
    \sF_n := \sF_{\Lambda_n},
    \qquad
    p_n := p_{\Lambda_n}.
\end{equation*}

After discarding finitely many terms, we may assume $p_n > r$. H\"older's inequality on $\Omega \times \T^2$ and \eqref{eq:qone-euler-moment} give
 \begin{equation}\label{eq:qone-euler-fixed-moment}
    E\bra{\int_{\T^2}|\sF_n|^r}
    \le
    \pa{
        E\bra{\int_{\T^2}|\sF_n|^{p_n}}
    }^{r/p_n}
    =e_{\Lambda_n}^{r/p_n}.
\end{equation}
Since $e_{\Lambda_n}\to\kappa_1>0$ and $p_n\to\infty$, the
right-hand side tends to one.  In particular,
\begin{equation}\label{eq:qone-euler-uniform-moment}
    \sup_n E\bra{\int_{\T^2}|\sF_n|^r}<\infty.
\end{equation}

Moreover,
\begin{equation*}
    \operatorname{dist}
        \pa{\sF_n,\overline B_1(0)}
    =
    (|\sF_n|-1)_+.
\end{equation*}
For every $\varepsilon>0$,
\begin{align}
    E\bra{\int_{\T^2}
        \operatorname{dist}
        \pa{\sF_n,\overline B_1(0)}^r}
    &\le
      \varepsilon^r
      +(1+\varepsilon)^{r-p_n}
        E\bra{\int_{\T^2}|\sF_n|^{p_n}}
    \notag\\
    &=\varepsilon^r
      +(1+\varepsilon)^{r-p_n}e_{\Lambda_n}.
    \label{eq:qone-euler-distance}
\end{align}

Letting first $n\to\infty$ and then $\varepsilon\downarrow0$ gives
\begin{equation}\label{eq:qone-euler-distance-limit}
    E\bra{\int_{\T^2}
        \operatorname{dist}
        \pa{\sF_n,\overline B_1(0)}^r}
    \longrightarrow0.
\end{equation}

Let $h_n$ be the mean-zero periodic potential satisfying
\begin{equation*}
    \nabla h_n=\sF_n 
\end{equation*}
By the measurability assertion in \cref{prop:finite-cutoff-variational}
 and the continuity of $U\mapsto|U|^{q_{\Lambda_n} - 2} U$, the field $\sF_n$ is strongly measurable in $L^{p_n}$ and hence in $L^r$ since $p_n > r$. The periodic Poincar\'e inequality then shows that its mean-zero potential $h_n$ is strongly measurable in $W^{1, r}$. We may therefore apply \cref{lem:qone-sharp-truncation}.
 
Let $\widehat\psi_n$ be the rescaled potentials supplied by that
lemma, and set
\begin{equation*}
    \widehat\sF_n:=\nabla\widehat\psi_n.
\end{equation*}
Then
\begin{equation}\label{eq:qone-admissible-euler-gradient}
    |\widehat\sF_n|\le1
    \quad\text{a.e.},
    \qquad
    E\bra{\int_{\T^2}
        |\widehat\sF_n-\sF_n|^2}
    \longrightarrow0.
\end{equation}

By the definition of $m_\Lambda$ and stationarity,
\begin{equation*}
    E\bra{\int_{\T^2}|\sZ_{\Lambda_n}|^2}=1.
\end{equation*}
Consequently, Cauchy--Schwarz and
\eqref{eq:qone-admissible-euler-gradient} imply
\begin{equation}\label{eq:qone-euler-pairing-stability}
    E\bra{\left|
        \int_{\T^2}
            \sZ_{\Lambda_n}\cdot
            \pa{\widehat\sF_n-\sF_n}
    \right|}
    \longrightarrow0.
\end{equation}
The field $\widehat\sF_n$ is a periodic gradient with norm at most one, and is therefore admissible in the finite-cutoff dual problem \eqref{eq:qone-finite-dual}
. Hence  
\begin{equation*}
    e_{1,\Lambda_n}
    \ge
    E\bra{\int_{\T^2}
        \sZ_{\Lambda_n}\cdot\widehat\sF_n}.
\end{equation*}
Using \eqref{eq:qone-euler-pairing},
\eqref{eq:qone-diagonal-energy}, and
\eqref{eq:qone-euler-pairing-stability}, we obtain
\begin{equation*}
    \liminf_{n\to\infty}e_{1,\Lambda_n}
    \ge\kappa_1.
\end{equation*}
Since the sequence $\Lambda_n\to\infty$ was arbitrary,
\begin{equation}\label{eq:qone-liminf}
    \liminf_{\Lambda\to\infty}e_{1,\Lambda}
    \ge\kappa_1.
\end{equation}
Combining \eqref{eq:qone-limsup} and \eqref{eq:qone-liminf} proves
\eqref{eq:qone-main-limit1}.  Finally,
\eqref{eq:qone-main-log1} follows from
\eqref{eq:linear-variance}.
\end{proof}

%%%%%%%%%%%%%%%%%%%%%%%%%%%%%%%%%%%%%%%%%%%%%%%%%%%%%%%%%%%%%%%%%%
\section{2D Random Bipartite Matching}
\label{sec:matching-application}

 In this section we prove \cref{thm:matching-exact-asymptotic}. 
For $q>1$, the proof combines the sharp transport-to-PDE comparison of \cite{AVT24} and the ultraviolet Gaussian asymptotic from \cref{thm:main}. Since the former works with  Binomial point processes with heat-flow regularization while the latter works with white noise regularized by a sharp ultraviolet cut-off, we need two replacement results,  \cref{prop:matching-heat-sharp-energy} to change the regularization kernel and \cref{prop:matching-empirical-gaussian-energy} to pass from Binomial to Gaussian noise.  At $q=1$, the two replacement estimates remain valid. Heat-flow contraction gives the lower bound, and the limit $q\downarrow1$ gives the upper bound.

%-----------------------------------------------------------------
\subsection{PDE reduction and fluctuation normalization}

For the moment fix $q\in(1,\infty)$.  Fix $B>2$ and set
\begin{equation}\label{eq:matching-mesoscopic-scale}
    t_N:=\frac{(\log N)^B}{N},
    \qquad
    \Lambda_N:=t_N^{-1/2},
    \qquad
    \alpha_N:=\frac{N}{\Lambda_N^2}=Nt_N=(\log N)^B.
\end{equation}
Let $P_t$ denote the heat semigroup on $\T^2$, and write
\begin{equation*}
    \rho_{0,N}\,dx=P_{t_N}\mu_N,
    \qquad
    \rho_{1,N}\,dx=P_{t_N}\nu_N,
    \qquad
    f_N:=\rho_{1,N}-\rho_{0,N}.
\end{equation*}
Recall the definition \eqref{eq:matching-fixed-torus-energy} of $\E_s(g)$. Let $\varphi_N$ be the mean-zero weak solution of
\begin{equation}\label{eq:matching-rprime-poisson}
    -\nabla\cdot\left(|\nabla\varphi_N|^{p-2}\nabla\varphi_N\right)
    =f_N.
\end{equation}
By \cref{prop:finite-cutoff-variational}, the flux
\begin{equation*}
    H_N:=-|\nabla\varphi_N|^{p-2}\nabla\varphi_N
\end{equation*}
is the unique minimizer for $\E_q(f_N)$ with 
\begin{equation*}
    \E_q(f_N)=\int_{\T^2}|\nabla\varphi_N|^{p}.
\end{equation*}
Since $t_N\gg N^{-1}\log N$ and
$\log(Nt_N)=B\log\log N=o(\log N)$, \cite[Theorem 1.1]{AVT24} gives
\begin{equation}\label{eq:matching-avt-reduction}
    E \bra{W_q^q(\mu_N,\nu_N)}
    =E \bra{\E_q(f_N)}
    +o\left(\left(\frac{\log N}{N}\right)^{q/2}\right).
\end{equation}
Normalize the empirical imbalance by
\begin{equation}\label{eq:matching-fluctuation-forcing}
    \zeta_N^0
    :=\sqrt{\frac N2}\,(\nu_N-\mu_N)
    =\frac1{\sqrt{2N}}\sum_{i=1}^N
      (\delta_{Y_i}-\delta_{X_i}),
    \qquad
    \zeta_N^{\mathrm h}:=P_{t_N}\zeta_N^0.
\end{equation}
Then $f_N=\sqrt{2/N}\,\zeta_N^{\mathrm h}$ and   by  homogeneity,
\begin{equation}\label{eq:matching-energy-normalization}
    \E_q(f_N)
    =\left(\frac2N\right)^{q/2}
      \E_q(\zeta_N^{\mathrm h}).
\end{equation}
The factor $2$ in this identity is the variance contribution of the two independent empirical samples. Notice that by Central Limit Theorem, $\zeta_N^0$ converges to white noise as $N\to \infty$.

%-----------------------------------------------------------------
\subsection{Quotient stability and cutoff replacement}

For a zero-mean distribution $g$ on $\T^2$, let
\begin{equation*}
    J_g:=-\nabla(-\Delta)^{-1}g,
    \qquad \nabla\cdot J_g=g.
\end{equation*}

\begin{lemma}
\label{lem:matching-quotient-stability}
For every $s\in[1,\infty)$ and every pair of zero-mean distributions $g,h$ for which $J_g,J_h\in L^s(\T^2;\R^2)$, one has
\begin{equation}\label{eq:matching-quotient-stability}
    \left|
      \E_s(g)^{1/s}-\E_s(h)^{1/s}
    \right|
    \le \|J_g-J_h\|_{L^s(\T^2)}.
\end{equation}
\end{lemma}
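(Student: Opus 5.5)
The plan is to rewrite $\E_s$ as a distance to a linear subspace, so that the inequality becomes the $1$-Lipschitz property of a distance function. By definition, $H\in L^s$ is admissible for $g$ exactly when $\nabla\cdot(H-J_g)=0$. Set
\[
\mathcal S_s:=\set{V\in L^s(\T^2;\R^2):\nabla\cdot V=0 \text{ distributionally}},
\]
which is a closed linear subspace of $L^s$. Since $J_g\in L^s$, the admissible fields for $g$ form the affine set $J_g+\mathcal S_s$. Hence
\[
\E_s(g)^{1/s}=\inf_{V\in\mathcal S_s}\|J_g+V\|_{L^s}=\operatorname{dist}_{L^s}(-J_g,\mathcal S_s),
\]
and the same identity holds for $h$. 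The set is nonempty ($V=0$), so both quantities are finite.

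Next I use that the distance to any subset of a normed space is $1$-Lipschitz. For every $V\in\mathcal S_s$, the triangle inequality gives
\[
\|J_g+V\|_{L^s}\le\|J_h+V\|_{L^s}+\|J_g-J_h\|_{L^s}.
\]
Taking the infimum over $V$ yields $\E_s(g)^{1/s}\le \E_s(h)^{1/s}+\|J_g-J_h\|_{L^s}$. Exchanging the roles of $g$ and $h$ gives \eqref{eq:matching-quotient-stability}.

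The only step needing care is the identification of the admissible set for distributional data $g$, and the case $s=1$. In \eqref{eq:matching-fixed-torus-energy} the constraint $\nabla\cdot H=g$ is read distributionally. Since $\nabla\cdot J_g=g$ holds in the sense of periodic distributions (it is a Fourier multiplier identity on the zero-mean data), $H-J_g$ is divergence-free exactly when $H$ is admissible. No reflexivity, attainment of the infimum, or $L^s$-boundedness of the Leray projection is used. The argument therefore works verbatim for $s=1$ and for any $s\ge1$, with $\mathcal S_s$ closed because distributional divergence is continuous from $L^s$ into distributions.
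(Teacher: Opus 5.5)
Your proposal is correct and follows the paper's own proof: write $\E_s(g)^{1/s}$ as the $L^s$-distance from $J_g$ (equivalently $-J_g$) to the subspace of periodic divergence-free fields, and use that a distance function is $1$-Lipschitz. You are also right that closedness of that subspace is not actually needed for the Lipschitz step, since you prove it directly with the triangle inequality.
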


\begin{proof}
Let $\Ssc_s\subset L^s(\T^2;\R^2)$ be the subspace of periodic
divergence-free fields, constants included.  It is closed because divergence is continuous from $L^s$ into distributions, including when $s=1$.  Every admissible flux with divergence $g$ is of the form $J_g+V$ with $V\in\Ssc_s$.  Consequently,
\begin{equation*}
    \E_s(g)^{1/s}
    =\operatorname{dist}_{L^s}(J_g,\Ssc_s).
\end{equation*}
The distance to a fixed closed set is one-Lipschitz, which gives
\eqref{eq:matching-quotient-stability}.
\end{proof}

Let $\Pi_{\le\lambda}$ be the Fourier projection on $\T^2$ onto the modes
$0<|k|\le\lambda$, and set
\begin{equation}\label{eq:matching-sharp-forcing}
    \zeta_N^\sharp:=\Pi_{\le\Lambda_N}\zeta_N^0.
\end{equation}
We compare the heat multiplier at time $t_N=\Lambda_N^{-2}$ with this sharp ultraviolet cutoff.

For $\lambda\ge2\pi$ and $k\in{2\pi \Z^2}\setminus\{0\}$, put
\begin{equation}\label{eq:matching-cutoff-profiles}
    b_{\mathrm h}^{\lambda}(k):=\e^{-|k|^2/\lambda^2},
    \qquad
    b_\sharp^{\lambda}(k):=\one_{0<|k|\le\lambda},
    \qquad
    a^{\lambda}:=b_{\mathrm h}^{\lambda}-b_\sharp^{\lambda},
\end{equation}
and define the point-source Poisson kernel
\begin{equation}\label{eq:matching-point-kernel}
    K^b(x)
    :=\sum_{k\in{2\pi \Z^2}\setminus\{0\}}
      -\frac{ik}{|k|^2}b(k)\e^{ik\cdot x}.
\end{equation}
In particular we have 
\[
J_{\zeta_N^{\mathrm h}}=K^{b^{\Lambda_N}_{\mathrm h}} \ast \zeta^0_N \qquad \textrm{and } \qquad J_{\zeta_N^\sharp}=K^{b^{\Lambda_N}_{\sharp}} \ast \zeta^0_N.
\]
Also write
\begin{equation}\label{eq:matching-log-variance}
    m_\lambda
    :=\sum_{0<|k|\le\lambda}
      \frac1{|k|^2}.
\end{equation}
Thus $m_\lambda$ agrees with \eqref{eq:linear-variance} when
$\lambda=\Lambda$.

\begin{lemma}
\label{lem:matching-kernel-moments}
There is a universal $C<\infty$ such that, for every $\lambda\ge2\pi$,
\begin{align}
    \|K^{b_{\mathrm h}^{\lambda}}\|_\infty
    +\|K^{b_\sharp^{\lambda}}\|_\infty
    +\|K^{a^{\lambda}}\|_\infty
    &\le C\lambda,
    \label{eq:matching-kernel-linfty}\\
    \int_{\T^2}|K^{b_\sharp^{\lambda}}|^2
    &=m_\lambda,
    \label{eq:matching-kernel-sharp-l2}\\
    \int_{\T^2}|K^{b_{\mathrm h}^{\lambda}}|^2
    &\le m_\lambda+C,
    \label{eq:matching-kernel-heat-l2}\\
    \int_{\T^2}|K^{a^{\lambda}}|^2
    &\le C.
    \label{eq:matching-kernel-difference-l2}
\end{align}
Consequently, for $s\ge2$ and any one of the three profiles $b$ above,
\begin{equation}\label{eq:matching-kernel-ls}
    \int_{\T^2}|K^b|^s
    \le C_s\lambda^{s-2}
    \int_{\T^2}|K^b|^2.
\end{equation}

At $\lambda=\Lambda_N$, for every fixed $s\in(1,\infty)$,
\begin{align}
    E\bra{\|J_{\zeta_N^{\mathrm h}}\|_{L^s}^s}
    +E\bra{\|J_{\zeta_N^\sharp}\|_{L^s}^s}
    &\le C_s m_{\Lambda_N}^{s/2},
    \label{eq:matching-empirical-flux-moments}\\
    E\bra{\|J_{\zeta_N^{\mathrm h}}
      -J_{\zeta_N^\sharp}\|_{L^s}^s}
    &\le C_s.
    \label{eq:matching-cutoff-difference-moments}
\end{align}
\end{lemma}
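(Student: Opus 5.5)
The plan is to work entirely on the Fourier side for the deterministic kernel bounds, and then use a Rosenthal-type inequality for sums of i.i.d.\ centered terms to get the empirical moment bounds.

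\emph{$L^\infty$ bound \eqref{eq:matching-kernel-linfty}.} The sharp kernel is a finite sum, so $\|K^{b_\sharp^\lambda}\|_\infty\le\sum_{0<|k|\le\lambda}|k|^{-1}\lesssim\lambda$ by comparing the lattice sum with $\int_{|y|\le\lambda}|y|^{-1}dy$. For the heat kernel the same comparison gives $\sum_k|k|^{-1}e^{-|k|^2/\lambda^2}\lesssim\lambda$; this is crude but sufficient. The bound for $K^{a^\lambda}$ then follows from the triangle inequality.

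\emph{$L^2$ bounds.} By Parseval, $\int|K^b|^2=\sum_k|b(k)|^2/|k|^2$. This gives \eqref{eq:matching-kernel-sharp-l2} exactly. For the heat profile, split the sum at $|k|=\lambda$. Inside, $e^{-2|k|^2/\lambda^2}\le1$ contributes at most $m_\lambda$. Outside, the sum $\sum_{|k|>\lambda}|k|^{-2}e^{-2|k|^2/\lambda^2}$ is bounded by a constant, again by integral comparison after the scaling $y=k/\lambda$. For the difference, $|a^\lambda(k)|^2$ equals $(1-e^{-|k|^2/\lambda^2})^2\le|k|^4/\lambda^4$ inside the ball and $e^{-2|k|^2/\lambda^2}$ outside. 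The inner sum $\lambda^{-4}\sum_{|k|\le\lambda}|k|^2$ is $O(1)$, and the outer sum was just shown to be $O(1)$. Interpolating, $\int|K^b|^s\le\|K^b\|_\infty^{s-2}\int|K^b|^2$, and \eqref{eq:matching-kernel-ls} follows from the $L^\infty$ bound.

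\emph{Empirical moments.} Write $J_{\zeta_N^{\rm h}}(x)=K\ast\zeta_N^0(x)=(2N)^{-1/2}\sum_{i}\big(K(x-Y_i)-K(x-X_i)\big)$ with $K=K^{b_{\rm h}^{\Lambda_N}}$. For fixed $x$ the summands are i.i.d.\ and centered, since $K$ has mean zero. By stationarity it suffices to bound $E|J(0)|^s$. First take $s\ge2$. Rosenthal's inequality gives
\[
E|J(0)|^s\le C_s\Big[\big(\textstyle\int|K|^2\big)^{s/2}+N^{1-s/2}\int|K|^s\Big].
\]
Inserting \eqref{eq:matching-kernel-ls} bounds the second term by $C_sN^{1-s/2}\Lambda_N^{s-2}m_{\Lambda_N}=C_s\alpha_N^{1-s/2}m_{\Lambda_N}$. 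Since $\alpha_N=(\log N)^B\to\infty$, $s\ge2$, and $m_{\Lambda_N}\ge c$, this is at most $C_s m_{\Lambda_N}^{s/2}$. The first term is also $\le C_s m_{\Lambda_N}^{s/2}$ by the $L^2$ bounds. For $s<2$, use Jensen's inequality from the $s=2$ case. The same argument applies to the sharp kernel, giving \eqref{eq:matching-empirical-flux-moments}.

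For \eqref{eq:matching-cutoff-difference-moments}, apply the same Rosenthal argument to $K^{a^{\Lambda_N}}$. The first term is $C_s$ by \eqref{eq:matching-kernel-difference-l2}, and the second is $C_s\alpha_N^{1-s/2}\le C_s$.

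The main obstacle is the bookkeeping of lattice sums near the cutoff, namely making the $O(1)$ error terms uniform in $\lambda$. The Rosenthal step is only safe because $\alpha_N\to\infty$, which is exactly where $t_N\gg N^{-1}$ is used.
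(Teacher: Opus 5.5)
Your proposal is correct and follows essentially the same route as the paper. Both arguments use Parseval with lattice-sum comparisons (including the $1-e^{-x}\le x$ bound inside the ball for $a^\lambda$), then interpolate via $\|K\|_s^s\le\|K\|_\infty^{s-2}\|K\|_2^2$, and finally apply Rosenthal's inequality to the i.i.d.\ centered summands, where $N^{1-s/2}\Lambda_N^{s-2}=\alpha_N^{1-s/2}\le 1$, with Jensen/H\"older for $s<2$. One minor point: the Rosenthal step only needs $\alpha_N\ge 1$, not $\alpha_N\to\infty$.
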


\begin{proof}
By definition of $K^b$ and the triangle inequality,
\begin{equation*}
    \|K^{b_\sharp^{\lambda}}\|_\infty+\|K^{b_h^{\lambda}}\|_\infty
    \le\sum_{0<|k|\le\lambda}\frac1{|k|}+\sum_{k\ne0}\frac{\e^{-|k|^2/\lambda^2}}{|k|}\le C\lambda.
\end{equation*}
By triangle inequality  this proves \eqref{eq:matching-kernel-linfty}.  Parseval gives \eqref{eq:matching-kernel-sharp-l2}.  It also gives
\begin{align*}
    \int_{\T^2}|K^{b_{\mathrm h}^{\lambda}}|^2
    =\sum_{k\ne0}\frac{\e^{-2|k|^2/\lambda^2}}{|k|^2},\qquad
    \int_{\T^2}|K^{a^{\lambda}}|^2
    =\sum_{k\ne0}\frac{
      |\e^{-|k|^2/\lambda^2}-\one_{|k|\le\lambda}|^2}{|k|^2}.
\end{align*}
For the tails we notice that a simple integral comparison argument  yields 
\begin{equation}\label{eq:tail-lattice-bound}
    \sum_{|k|>\lambda}\frac{\e^{-2|k|^2/\lambda^2}}{|k|^2}
    \le\frac2\pi\int_{1/2}^\infty\frac{\e^{-u^2/2}}u\,du
    =:C_0<\infty,
\end{equation}
which is independent of $\lambda$. Together with $\e^{-2|k|^2/\lambda^2}\le1$ for $|k|\le\lambda$, this gives \eqref{eq:matching-kernel-heat-l2}. For \eqref{eq:matching-kernel-difference-l2}, note that $a^\lambda(k)=\e^{-|k|^2/\lambda^2}$ for $|k|>\lambda$, so
\eqref{eq:tail-lattice-bound} bounds $\sum_{|k|>\lambda}|a^\lambda(k)|^2/|k|^2$ by $C_0$. For $0<|k|\le\lambda$, the elementary inequality $1-\e^{-x}\le x$
($x\ge0$) gives
\begin{equation*}
    \frac{|a^\lambda(k)|^2}{|k|^2}\le\frac{|k|^2}{\lambda^4} \qquad \textrm{thus } \qquad \sum_{|k|\le\lambda}|a^\lambda(k)|^2/|k|^2\le\sum_{|k|\le\lambda} \frac{|k|^2}{\lambda^4}\le C.
\end{equation*}
This proves \eqref{eq:matching-kernel-difference-l2}.
Equation \eqref{eq:matching-kernel-ls} follows from
$\|K\|_s^s\le\|K\|_\infty^{s-2}\|K\|_2^2$.

For one of the profiles $b\in\{b_{\mathrm h}^{\lambda},b_{\sharp}^{\lambda},a^\lambda\}$, define
\begin{equation*}
    J_N^b(x)
    :=\frac1{\sqrt{2N}}\sum_{i=1}^N
      \left(K_{\Lambda_N}^b(x-Y_i)
      -K_{\Lambda_N}^b(x-X_i)\right).
\end{equation*}
These fields are, respectively, $J_{\zeta_N^{\mathrm h}}$, $J_{\zeta_N^\sharp}$, and their difference. For fixed $x$, write
\begin{equation*}
    U_i^b(x):=\frac{K_{\Lambda_N}^b(x-Y_i)
      -K_{\Lambda_N}^b(x-X_i)}{\sqrt2}.
\end{equation*}
The vectors $U_i^b(x)$ are i.i.d. and centered.  If
$I_b:=\int_{\T^2}|K_{\Lambda_N}^b|^2$, then
\begin{equation*}
    E\bra{|U_i^b(x)|^2}=I_b,
    \qquad
    E\bra{|U_i^b(x)|^s}\le C_s
      \int_{\T^2}|K_{\Lambda_N}^b|^s.
\end{equation*}
For $s\ge2$, Rosenthal's inequality and
\eqref{eq:matching-kernel-ls} yield (recall the definition \eqref{eq:matching-mesoscopic-scale} of $\alpha_N$)
\begin{equation}\label{eq:matching-rosenthal-bound}
    E\bra{|J_N^b(x)|^s}
    \le C_s\left(
      I_b^{s/2}
      +N^{1-s/2}\Lambda_N^{s-2}I_b
    \right)
    =C_s\left(I_b^{s/2}
      +\alpha_N^{1-s/2}I_b\right).
\end{equation}
The law is stationary, so the same estimate holds after spatial integration. For the difference profile, $I_b\le C$ and
$\alpha_N^{1-s/2}\le1$.  For the heat and sharp profiles,
$I_b\le m_{\Lambda_N}+C$, and the right-hand side is bounded by
$C_s m_{\Lambda_N}^{s/2}$.  This proves
\eqref{eq:matching-empirical-flux-moments} and
\eqref{eq:matching-cutoff-difference-moments} for $s\ge2$.
The remaining exponents follow from H\"older inequality.
\end{proof}
Equipped with these estimates we can now prove the first replacement result.
\begin{proposition}
\label{prop:matching-heat-sharp-energy}
For every fixed $q\in[1,\infty)$, one has
\begin{equation}\label{eq:matching-heat-sharp-energy}
    E\bra{\E_q(\zeta_N^{\mathrm h})}
    =E\bra{\E_q(\zeta_N^\sharp)}
    +o(m_{\Lambda_N}^{q/2}).
\end{equation}
\end{proposition}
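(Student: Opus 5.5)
Set $a:=\E_q(\zeta_N^{\mathrm h})^{1/q}$ and $b:=\E_q(\zeta_N^\sharp)^{1/q}$; the plan is to reduce everything to the quotient stability \cref{lem:matching-quotient-stability} combined with the moment bounds of \cref{lem:matching-kernel-moments}. Both forcings are trigonometric polynomials, hence their linear fluxes $J_{\zeta_N^{\mathrm h}}$ and $J_{\zeta_N^\sharp}$ lie in every $L^s$. With $s=q$, \cref{lem:matching-quotient-stability} gives, sample by sample,
\begin{equation*}
    |a-b|\le D_N:=\|J_{\zeta_N^{\mathrm h}}-J_{\zeta_N^\sharp}\|_{L^q(\T^2)}.
\end{equation*}
By \eqref{eq:matching-cutoff-difference-moments}, $E[D_N^r]\le C_r$ for every fixed $r$, uniformly in $N$. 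Moreover, testing the infimum with the linear flux gives $a^q\le\|J_{\zeta_N^{\mathrm h}}\|_{L^q}^q$ and $b^q\le\|J_{\zeta_N^\sharp}\|_{L^q}^q$. Hence, by \eqref{eq:matching-empirical-flux-moments}, $E[a^q]+E[b^q]\le C_q m_{\Lambda_N}^{q/2}$.

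To pass from $|a-b|$ to $|a^q-b^q|$, use the elementary inequality $|a^q-b^q|\le q|a-b|(a+b)^{q-1}$, valid for $q\ge1$; at $q=1$ it reduces to $|a-b|$. Taking expectations and applying H\"older with exponents $q$ and $q/(q-1)$ gives
\begin{equation*}
    E|a^q-b^q|\le q\,(E[D_N^q])^{1/q}\,\bigl(E[(a+b)^q]\bigr)^{(q-1)/q}\le C_q\, m_{\Lambda_N}^{(q-1)/2}.
\end{equation*}
Since $m_{\Lambda_N}\to\infty$, the right-hand side is $o(m_{\Lambda_N}^{q/2})$: it is smaller by a factor $m_{\Lambda_N}^{-1/2}$. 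For $q=1$ the bound is simply $E|a-b|\le C=o(m_{\Lambda_N}^{1/2})$. This proves \eqref{eq:matching-heat-sharp-energy}.

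I expect no real obstacle: the substantive input is the $O(1)$ bound on the cutoff difference in \eqref{eq:matching-cutoff-difference-moments}, which the previous lemma already provides via Rosenthal's inequality and the choice $\alpha_N\to\infty$. The only point needing care is that both energies are finite and measurable, so the expectations make sense. Finiteness follows from the linear competitor bound above. For $q>1$, measurability follows from \cref{prop:finite-cutoff-variational}; at $q=1$ it follows from the distance representation in the proof of \cref{lem:matching-quotient-stability}, which shows $\E_q^{1/q}$ is a Lipschitz function of $J_g$.
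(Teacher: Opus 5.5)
Your proof is correct and follows the paper's argument: quotient stability, the linear-competitor bound $\E_q(g)\le\|J_g\|_{L^q}^q$, the moment estimates for the empirical fluxes, and $|a^q-b^q|\le q(a+b)^{q-1}|a-b|$ combined with H\"older give an $O(m_{\Lambda_N}^{(q-1)/2})$ error. The differences are cosmetic. The paper treats $q=1$ separately, using Cauchy--Schwarz from the $L^2$ difference bound; this is also the step that justifies your $E[D_N]\le C$, since the cited moment bound is stated only for exponents $s>1$. Also, $\zeta_N^{\mathrm h}$ is smooth but not a trigonometric polynomial, which changes nothing about the integrability of its flux.
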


\begin{proof}
Set
\begin{equation*}
    A_N:=\E_q(\zeta_N^{\mathrm h})^{1/q},
    \qquad
    B_N:=\E_q(\zeta_N^\sharp)^{1/q}.
\end{equation*}
By \cref{lem:matching-quotient-stability},
\begin{equation*}
    |A_N-B_N|
    \le\|J_{\zeta_N^{\mathrm h}}
      -J_{\zeta_N^\sharp}\|_{L^q}.
\end{equation*}
If $q=1$, then Cauchy--Schwarz  and \eqref{eq:matching-cutoff-difference-moments} with $s=2$
give
\begin{align*}
    E\bra{|A_N-B_N|}
    &\le E\bra{\|J_{\zeta_N^{\mathrm h}}
      -J_{\zeta_N^\sharp}\|_{L^1}}\\
    &\le\left(
      E\bra{\|J_{\zeta_N^{\mathrm h}}
      -J_{\zeta_N^\sharp}\|_{L^2}^2}
      \right)^{1/2}
    =O(1)=o(m_{\Lambda_N}^{1/2}).
\end{align*}
This proves the assertion at the endpoint.
Assume now that $q>1$.
By definition \eqref{eq:matching-fixed-torus-energy} of $\E_q$,
$A_N\le\|J_{\zeta_N^{\mathrm h}}\|_{L^q}$ and
$B_N\le\|J_{\zeta_N^\sharp}\|_{L^q}$.  Hence
\begin{align*}
    E\bra{|A_N^q-B_N^q|}
    &\le q\,E\left[(A_N+B_N)^{q-1}|A_N-B_N|\right]\\
    &\le q\left(E\bra{(A_N+B_N)^q}\right)^{(q-1)/q}
      \left(E\bra{|A_N-B_N|^q}\right)^{1/q}\\
    &\lesssim_q m_{\Lambda_N}^{(q-1)/2}
    =o(m_{\Lambda_N}^{q/2}),
\end{align*}
where \cref{lem:matching-kernel-moments} was used in the last line.
\end{proof}

%-----------------------------------------------------------------
\subsection{Quantitative Gaussian replacement}
We now prove our second replacement result. The main tool is the quantitative CLT theorem from \cite{Zhai18}.
\begin{proposition}
\label{prop:matching-empirical-gaussian-energy}
For every fixed $q\in[1,\infty)$, one has
\begin{equation}\label{eq:matching-empirical-gaussian-energy}
    E\bra{\E_q(\zeta_N^\sharp)}
    =E\bra{\E_q(\xi_{\Lambda_N})}
    +o(m_{\Lambda_N}^{q/2}).
\end{equation}
\end{proposition}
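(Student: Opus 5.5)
The plan is to realize $\zeta_N^\sharp$ and $\xi_{\Lambda_N}$ on a common probability space through an optimal coupling of their linear fluxes, and then to transfer closeness of the fluxes to closeness of the energies using \cref{lem:matching-quotient-stability}, exactly as in the proof of \cref{prop:matching-heat-sharp-energy}.

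\textbf{Finite-dimensional setting.} The flux $J_{\zeta_N^\sharp}$ lies in the finite-dimensional Hilbert space
\[
    \mathcal V_N:=\operatorname{span}\{k\,\e^{ik\cdot x}:0<|k|\le\Lambda_N\}\subset L^2(\T^2;\R^2),
\]
whose real dimension is $d_N\asymp\Lambda_N^2$. It is a normalized sum
\[
    J_{\zeta_N^\sharp}=N^{-1/2}\sum_{i=1}^N U_i,
    \qquad
    U_i:=\frac{K^{b_\sharp^{\Lambda_N}}(\cdot-Y_i)-K^{b_\sharp^{\Lambda_N}}(\cdot-X_i)}{\sqrt2},
\]
of i.i.d. centered $\mathcal V_N$-valued vectors. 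A direct Fourier computation shows that the covariance of $U_i$ coincides with that of $J_{\xi_{\Lambda_N}}=\sJ_{\Lambda_N}$: each Fourier coefficient of $\zeta_N^0$ has unit variance and distinct modes are uncorrelated. This is also the reason for the factor $2$ in \eqref{eq:matching-fluctuation-forcing}. Moreover $\|U_i\|_{L^2}\le\sqrt2\,\|K^{b_\sharp^{\Lambda_N}}\|_{L^2}=\sqrt{2m_{\Lambda_N}}=:\beta_N$ almost surely, by \eqref{eq:matching-kernel-sharp-l2}.

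\textbf{Applying Zhai's CLT.} The theorem in \cite{Zhai18} gives a coupling such that
\[
    E\bra{\|J_{\zeta_N^\sharp}-\sJ_{\Lambda_N}\|_{L^2}^2}^{1/2}\le C\sqrt{d_N}\,\beta_N\sqrt{\log N}\,N^{-1/2}.
\]
Since $d_N/N\asymp\alpha_N^{-1}=(\log N)^{-B}$, the right-hand side is $O\big((\log N)^{1-B/2}\big)=o(1)$ because $B>2$. This is exactly where $B>2$ is used, and it is the step I expect to be the main obstacle: one has to check the precise form of Zhai's bound in terms of dimension and almost-sure norm, and to verify that the logarithmic factors close. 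If they did not, I would enlarge $B$ or fall back on a bound of Bonis type involving fourth moments.

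\textbf{Passing from $L^2$ to $L^q$.} The coupling controls the difference only in $L^2$, while \cref{lem:matching-quotient-stability} requires it in $L^q$.
\begin{itemize}
    \item For $q\le2$ this is immediate from H\"older's inequality on the unit-volume torus.
    \item For $q>2$ I would interpolate between $L^2$ and $L^{2q}$. The $L^{2q}$ moments of both fluxes are $O(m_{\Lambda_N}^{q})$, by \eqref{eq:matching-empirical-flux-moments} and by Gaussian hypercontractivity respectively. This gives
    \[
        E\bra{\|\Delta_N\|_{L^q}^q}=o(m_{\Lambda_N}^{q/2}),
        \qquad
        \Delta_N:=J_{\zeta_N^\sharp}-\sJ_{\Lambda_N},
    \]
    provided the $L^2$ error decays faster than a power of $m_{\Lambda_N}\asymp\log N$. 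Taking $B$ large makes this hold, and the constant does not depend on $B$ because of \cref{prop:matching-heat-sharp-energy}. Alternatively, one can use the full-dimensional Zhai estimate in $\mathcal W_2$ and then $\|\cdot\|_{L^q}\le\|\cdot\|_\infty\lesssim$ a finite-dimensional norm.
\end{itemize}

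\textbf{Conclusion.} Set $A=\E_q(\zeta_N^\sharp)^{1/q}$ and $B'=\E_q(\xi_{\Lambda_N})^{1/q}$, so that $|A-B'|\le\|\Delta_N\|_{L^q}$. Repeating the final computation of \cref{prop:matching-heat-sharp-energy}, with $A,B'\le$ the $L^q$ norms of the respective fluxes, gives
\[
    E|A^q-B'^q|\lesssim m_{\Lambda_N}^{(q-1)/2}\,E\bra{\|\Delta_N\|_{L^q}^q}^{1/q}=o(m_{\Lambda_N}^{q/2}).
\]
For $q=1$ the same estimate holds directly via the $L^1\le L^2$ bound.
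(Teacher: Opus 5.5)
Your proposal is correct and follows the paper's proof essentially step by step. You apply Zhai's $\mathcal W_2$ CLT on the finite-dimensional space of cutoff gradient fields, with matched covariance and almost-sure bound $\sqrt{2m_{\Lambda_N}}$; you use H\"older for $q\le2$ and interpolation against uniform $L^s$ moments for $q>2$; and you conclude with the quotient-stability argument of \cref{prop:matching-heat-sharp-energy}.

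One correction: Zhai's bound carries the factor $1+\log N$, not $\sqrt{\log N}$, so the $L^2$ coupling error is $O\bigl(\sqrt{m_{\Lambda_N}}(\log N)^{1-B/2}\bigr)$. This is $o(\sqrt{m_{\Lambda_N}})$, but the bound is no longer $o(1)$ when $B\le3$. Since your interpolation only needs $o(\sqrt{m_{\Lambda_N}})$ in $L^2$, not power decay, no enlargement of $B$ is required. Your suggested alternative through $\|\cdot\|_\infty$ should be dropped, because it would lose a Nikolskii factor of order $\Lambda_N$.
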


\begin{proof}
We start by setting some notation. Let $\mathscr H_\lambda$ be the finite-dimensional real Hilbert space of mean-zero gradient fields on $\T^2$ whose Fourier support is contained in $0<|k|\le\lambda$, equipped with the $L^2(\T^2)$ norm.  Its real dimension
$d_\lambda$ satisfies
\begin{equation}\label{eq:matching-cutoff-dimension}
    d_\lambda\le C\lambda^2.
\end{equation}
For $z\in\T^2$, define $V_z^\lambda\in\mathscr H_\lambda$ by $V_z^\lambda(x):= K^{b_\sharp^{\lambda}} (x-z)$, that is in term of Fourier coefficients
\begin{equation}\label{eq:matching-single-particle-vector}
    \widehat V_z^\lambda(k)
    =-\frac{ik}{|k|^2}\e^{-ik\cdot z}\one_{0<|k|\le\lambda}.
\end{equation}
Parseval gives
\begin{equation}\label{eq:matching-single-particle-norm}
    \|V_z^\lambda\|_{L^2}^2=m_\lambda.
\end{equation}
Let $\xi_\lambda$ be the ultraviolet-cutoff white noise with $\Lambda$ there replaced by $\lambda$,
and let
\begin{equation*}
    \sJ_\lambda:=-\nabla(-\Delta)^{-1}\xi_\lambda
    \in\mathscr H_\lambda.
\end{equation*}
We claim that there exists a coupling such that for  $\lambda=\Lambda_N$, we have  for every
$q\in[1,\infty)$,
\begin{equation}\label{eq:matching-lr-coupling}
    \left(
      E\bra{\|J_{\zeta_N^\sharp}-\sJ_{\Lambda_N}\|_{L^q}^q}
    \right)^{1/q}
    =o(\sqrt{m_{\Lambda_N}}).
\end{equation}
With this claim at hand, we can conclude the proof of \eqref{eq:matching-empirical-gaussian-energy} arguing as in \cref{prop:matching-heat-sharp-energy}.

We thus prove \eqref{eq:matching-lr-coupling}. Notice first that with our notation,
\begin{equation}\label{eq:matching-hilbert-sum}
    J_{\zeta_N^\sharp}
    =\frac1{\sqrt N}\sum_{i=1}^N Z_{i,N},
    \qquad
    Z_{i,N}:=\frac{V_{Y_i}^{\Lambda_N}
      -V_{X_i}^{\Lambda_N}}{\sqrt2}.
\end{equation}
The vectors $Z_{i,N}$ are i.i.d., centered, and by triangle inequality
\begin{equation}\label{eq:matching-summand-bound}
    \|Z_{i,N}\|_{L^2}\le\sqrt{2m_{\Lambda_N}}
    \qquad\text{almost surely}.
\end{equation}

The covariance of $Z_{i,N}$ is exactly the covariance of
$\sJ_{\Lambda_N}$.  Indeed, for nonzero cutoff modes $k,\ell$,
\begin{align}
    E\left[
      \widehat{\zeta_N^\sharp}(k)
      \overline{\widehat{\zeta_N^\sharp}(\ell)}
    \right]
    &=\one_{k=\ell},
    \label{eq:matching-empirical-covariance}\\
    E\left[
      \widehat{\zeta_N^\sharp}(k)
      \widehat{\zeta_N^\sharp}(\ell)
    \right]
    &=\one_{k=-\ell}.
    \label{eq:matching-empirical-pseudocovariance}
\end{align}
These are precisely the covariance and pseudocovariance relations of real cutoff white noise.  Since the covariance of the normalized sum in \eqref{eq:matching-hilbert-sum} equals the covariance of each summand, application of the Poisson multiplier shows that $Z_{i,N}$ and $\sJ_{\Lambda_N}$ have the same covariance operator on
$\mathscr H_{\Lambda_N}$.

Theorem~1.1 of \cite{Zhai18}, applied after identifying
$\mathscr H_{\Lambda_N}$ isometrically with
$\R^{d_{\Lambda_N}}$, therefore gives
\begin{align}
    \mathcal W_2^{\mathscr H_{\Lambda_N}}
    \left(
      \operatorname{Law}(J_{\zeta_N^\sharp}),
      \operatorname{Law}(\sJ_{\Lambda_N})
    \right)
    &\le
    \frac{5\sqrt{d_{\Lambda_N}}\sqrt{2m_{\Lambda_N}}
      (1+\log N)}{\sqrt N}
    \notag\\
    &\stackrel{\eqref{eq:matching-mesoscopic-scale}}{\le} C\sqrt{m_{\Lambda_N}}
      \frac{\log N}{\sqrt{\alpha_N}}
    =o(\sqrt{m_{\Lambda_N}}).
    \label{eq:matching-zhai-bound}
\end{align}
Here $\mathcal W_2^{\mathscr H_\lambda}$ denotes quadratic transportation distance for the Hilbert norm $\|\cdot\|_{L^2}$, and the final estimate uses $B>2$ in \eqref{eq:matching-mesoscopic-scale}.

Choose an optimal coupling in \eqref{eq:matching-zhai-bound}.  For
$1\le r\le2$, H\"older inequality yields
\begin{equation}\label{eq:matching-lr-coupling-low}
    \left(
      E\bra{\|J_{\zeta_N^\sharp}-\sJ_{\Lambda_N}\|_{L^q}^q}
    \right)^{1/q}
    \le
    \left(
      E\bra{\|J_{\zeta_N^\sharp}-\sJ_{\Lambda_N}\|_{L^2}^2}
    \right)^{1/2}
    =o(\sqrt{m_{\Lambda_N}}).
\end{equation}
For $q>2$, fix $s>q$.  By
\cref{lem:matching-kernel-moments},
\begin{equation}\label{eq:matching-empirical-high-moment}
    E\bra{\|J_{\zeta_N^\sharp}\|_{L^s}^s}
    \le C_s m_{\Lambda_N}^{s/2}.
\end{equation}
For every $s>0$, the Gaussian field satisfies the analogous estimate
\begin{equation}\label{eq:matching-gaussian-high-moment}
    E\bra{\|\sJ_{\Lambda_N}\|_{L^s}^s}
    =E\bra{|\sJ_{\Lambda_N}(0)|^s}
    \le C_s m_{\Lambda_N}^{s/2},
\end{equation}
because its pointwise covariance has trace $m_{\Lambda_N}$.  Consequently, under any coupling,
\begin{equation*}
    E\bra{\|J_{\zeta_N^\sharp}-\sJ_{\Lambda_N}\|_{L^s}^s}
    \le C_s m_{\Lambda_N}^{s/2}.
\end{equation*}
Choose $\theta\in(0,1)$ so that
$1/q=\theta/2+(1-\theta)/s$.  Spatial interpolation followed by H\"older's inequality in probability gives
\begin{align*}
    \left(
      E\bra{\|J_{\zeta_N^\sharp}-\sJ_{\Lambda_N}\|_{L^q}^q}
    \right)^{1/q}
    &\le
    \left(
      E\bra{\|J_{\zeta_N^\sharp}-\sJ_{\Lambda_N}\|_{L^2}^2}
    \right)^{\theta/2}
    \left(
      E\bra{\|J_{\zeta_N^\sharp}-\sJ_{\Lambda_N}\|_{L^s}^s}
    \right)^{(1-\theta)/s}\\
    &=o(\sqrt{m_{\Lambda_N}}).
\end{align*}
Together with \eqref{eq:matching-lr-coupling-low}, this proves the claimed \eqref{eq:matching-lr-coupling}.
\end{proof}

%-----------------------------------------------------------------
\subsection{Conclusion}

By \cref{thm:main},
\begin{equation}\label{eq:matching-apply-main-theorem}
    E\bra{\E_q(\xi_\lambda)}
    =\kappa_q m_\lambda^{q/2}+o(m_\lambda^{q/2})
    \qquad(\lambda\to\infty).
\end{equation}
Combining \eqref{eq:matching-energy-normalization},
\eqref{eq:matching-heat-sharp-energy},
\eqref{eq:matching-empirical-gaussian-energy}, and
\eqref{eq:matching-apply-main-theorem}, we obtain
\begin{equation}\label{eq:matching-pde-energy-asymptotic}
    E\bra{\int_{\T^2}|\nabla\varphi_N|^{p}}
    =\kappa_q\left(\frac{2m_{\Lambda_N}}N\right)^{q/2}
    +o\left(\left(\frac{\log N}{N}\right)^{q/2}\right).
\end{equation}
By \eqref{eq:linear-variance} and
\eqref{eq:matching-mesoscopic-scale},
\begin{equation*}
    \log\Lambda_N
    =\frac12\log N-\frac B2\log\log N,
    \qquad
    \frac{2m_{\Lambda_N}}N
    =\frac{\log N}{2\pi N}(1+o(1)).
\end{equation*}
Substituting this into \eqref{eq:matching-pde-energy-asymptotic} and then using \eqref{eq:matching-avt-reduction} proves
\eqref{eq:matching-exact-limit}.  

%-----------------------------------------------------------------
\subsection{The endpoint matching problem}
\label{ssec:matching-qone}

We now prove \cref{thm:matching-exact-asymptotic} for $q=1$.  Set
\begin{equation}\label{eq:matching-endpoint-scale}
    a_N:=\left(\frac{\log N}{2\pi N}\right)^{1/2}.
\end{equation}

We first establish the lower bound.  We recall that the heat semigroup is contractive for $W_1$ (see \cite{fathi2025quantitative} for quantitative versions):
\begin{equation}\label{eq:matching-heat-W1-contraction}
    W_1(P_t\mu,P_t\nu)\le W_1(\mu,\nu)
    \qquad(t>0).
\end{equation}

For the smooth densities introduced in \eqref{eq:matching-fluctuation-forcing}, Kantorovich--Rubinstein duality and \cref{lem:qone-finite-duality} give
\begin{equation}\label{eq:matching-W1-Beckmann}
    W_1(P_{t_N}\mu_N,P_{t_N}\nu_N)
    =\E_1(f_N)
    =\left(\frac2N\right)^{1/2}
      \E_1(\zeta_N^{\mathrm h}).
\end{equation}

The endpoint cases of
\cref{prop:matching-heat-sharp-energy,prop:matching-empirical-gaussian-energy},
followed by \cref{thm:qone-ultraviolet1}, yield
\begin{equation}
    E\bra{\E_1(\zeta_N^{\mathrm h})}
    =E\bra{\E_1(\zeta_N^\sharp)}+o(\sqrt{m_{\Lambda_N}})
    =E\bra{\E_1(\xi_{\Lambda_N})}+o(\sqrt{m_{\Lambda_N}})
    =\kappa_1\sqrt{m_{\Lambda_N}}
      +o(\sqrt{m_{\Lambda_N}}).
    \label{eq:matching-endpoint-smoothed-energy}
\end{equation}
  Combining
\eqref{eq:matching-heat-W1-contraction},
\eqref{eq:matching-W1-Beckmann}, and
\eqref{eq:matching-endpoint-smoothed-energy}, we obtain
\begin{equation*}
    E[W_1(\mu_N,\nu_N)]
    \ge \kappa_1\left(\frac{2m_{\Lambda_N}}N\right)^{1/2}
      +o\left(\left(\frac{m_{\Lambda_N}}N\right)^{1/2}\right).
\end{equation*}
By definition \eqref{eq:linear-variance} of $m_{\Lambda_N}$ we obtain as in the case $r>1$,
\begin{equation}\label{eq:matching-endpoint-liminf}
    \liminf_{N\to\infty}\frac{E[W_1(\mu_N,\nu_N)]}{a_N}
    \ge\kappa_1.
\end{equation}

For the upper bound, fix any $q>1$.  Since 
$W_1\le W_q$,  by H\"older inequality,
\begin{equation*}
    E[W_1(\mu_N,\nu_N)]
    \le E[W_q(\mu_N,\nu_N)]
    \le\left(E W_q^q(\mu_N,\nu_N)\right)^{1/q}.
\end{equation*}
The already proved $q>1$ case of
\eqref{eq:matching-exact-limit} therefore gives
\begin{equation*}
    \limsup_{N\to\infty}\frac{E[W_1(\mu_N,\nu_N)]}{a_N}
    \le\kappa_q^{1/q}.
\end{equation*}
Now let $q\downarrow1$.  By \cref{prop:qone-continuity},
$\kappa_q\to\kappa_1>0$, and hence
$\kappa_q^{1/q}\to\kappa_1$.  Thus
\begin{equation}\label{eq:matching-endpoint-limsup}
    \limsup_{N\to\infty}\frac{E[W_1(\mu_N,\nu_N)]}{a_N}
    \le\kappa_1.
\end{equation}
Together, \eqref{eq:matching-endpoint-liminf} and
\eqref{eq:matching-endpoint-limsup} prove
\eqref{eq:matching-exact-limit} at
$q=1$.  

%%%%%%%%%%%%%%%%%%%%%%%%%%%%%%%%%%%%%%%%%%%%%%%%%%%%%%%%%%%%%%%%%%
\appendix
\section{Proofs of Technical Results}
\label[appendix]{app:appendix}

%-------------------------------------------
\subsection{Ultraviolet Cutoff Problem}

\begin{proof}[Proof of \cref{prop:finite-cutoff-variational}]
The first part of the claim is standard, we only prove  measurability of $\sH_{q,\Lambda}$.  Let
$\Ssc_q\subset L^q(\T^2;\R^2)$ be the closed subspace of solenoidal
fields.  Every admissible class is $J+\Ssc_q$ for a particular flux $J$.
Suppose that $J_n\to J$ in $L^q$, let $H_n$ minimize on $J_n+\Ssc_q$, and
let $H$ minimize on $J+\Ssc_q$.  The bound $\|H_n\|_q\le\|J_n\|_q$
makes $(H_n)$ bounded.  Every weak cluster point $\overline H$ belongs to
$J+\Ssc_q$, and the competitor $H+J_n-J$ gives
\begin{equation}\label{eq:minimizer-continuity-limsup}
    \limsup_{n\to\infty}\|H_n\|_q^q
    \le\lim_{n\to\infty}\|H+J_n-J\|_q^q
    =\|H\|_q^q.
\end{equation}
Weak lower semicontinuity and minimality of $H$ give, along every weakly
convergent subsequence,
\begin{equation*}
    \|H\|_q^q
    \le\|\overline H\|_q^q
    \le\liminf_{n\to\infty}\|H_n\|_q^q.
\end{equation*}
Together with \eqref{eq:minimizer-continuity-limsup}, uniqueness identifies
$\overline H=H$ and yields $\|H_n\|_q\to\|H\|_q$.  Uniform convexity then
upgrades weak convergence to strong convergence.  Hence the minimizing
representative depends continuously on $J$ in $L^q$.  Since
$\sJ_{\Lambda}$ is a finite-dimensional measurable function
of the Fourier coefficients of $\xi_\Lambda$, the minimizer
$\sH_{q,\Lambda}$ is strongly measurable.
\end{proof}

%\begin{proof}[Proof of \cref{lem:angular-minimization}]
% For fixed $\theta$, completing the square gives
% \begin{align*}
%  &(e_\theta+a e_\theta^\perp)^\top
%  H(e_\theta+a e_\theta^\perp)
%  \\
%  &\qquad=
%  (e_\theta^\perp)^\topHe_\theta^\perp
%  \left(a+
%  \frac{(e_\theta^\perp)^\topHe_\theta}
%  {(e_\theta^\perp)^\topHe_\theta^\perp}
%  \right)^2
%  +\frac{\det H}{(e_\theta^\perp)^\topHe_\theta^\perp}.
% \end{align*}
% The $b$-term is nonnegative.  Diagonalizing $H$, with eigenvalues
% $\lambda_1,\lambda_2>0$, yields
% \begin{equation*}
%  \int_0^\pi
%  \frac{d\theta/\pi}{\lambda_1\sin^2\theta+\lambda_2\cos^2\theta}
%  =\frac1{\sqrt{\lambda_1\lambda_2}}.
% \end{equation*}
% Multiplication by $\det H$ proves the result.
% \end{proof}

\begin{proof}[Proof of \cref{lem:predictable-density}]
Write
\begin{equation*}
 A_t
 :=\int_0^t\int_0^\pi
 \bigl(|a_s(\theta)|^2+|b_s(\theta)|^2\bigr)
 \frac{d\theta}{\pi}\,ds.
\end{equation*}
This is a continuous adapted increasing process and is therefore
predictable.  For $N\ge1$, set $(a^{[N]}_s,b^{[N]}_s)
 :=\one_{\{A_s<N^2\}}(a_s,b_s)$. The truncated pair is predictable, its total square function is at most $N^2$, and the absolute continuity of $A$ gives
\begin{equation}\label{eq:square-function-stopping}
 \int_0^T\int_0^\pi
 \left(
 |a_s-a_s^{[N]}|^2+|b_s-b_s^{[N]}|^2
 \right)\frac{d\theta}{\pi}\,ds
 =(A_T-N^2)_+.
\end{equation}
Since $E[A_T^{r/2}]<\infty$, dominated convergence in
\eqref{eq:square-function-stopping} yields as $N\to \infty$
\begin{equation}\label{eq:hardy-stopping-convergence}
 \|(a,b)-(a^{[N]},b^{[N]})\|_{\H_T^r}
 \longrightarrow0.
\end{equation}
Let $\Pi_M$ be the metric projection of $\R^2$ onto the closed ball of radius $M$, and define
\begin{equation*}
 (a^{[N,M]},b^{[N,M]})
 :=\Pi_M(a^{[N]},b^{[N]}).
\end{equation*}
The pair is predictable and bounded by $M$.  Its difference from
$(a^{[N]},b^{[N]})$ converges pointwise to zero as $M\to\infty$, while its
squared integral is bounded by $N^2$.  Hence another application of dominated
convergence gives
\begin{equation}\label{eq:hardy-amplitude-convergence}
 \|(a^{[N]},b^{[N]})-(a^{[N,M]},b^{[N,M]})\|_{\H_T^r}
 \longrightarrow0
 \qquad(M\to\infty).
\end{equation}

It remains to approximate a bounded predictable pair.  We complete the predictable $\sigma$-algebra with respect to $dP\,ds$, and complete its product
with the Borel $\sigma$-algebra on $[0,\pi]$ with respect to
$dP\,ds\,d\theta/\pi$.  Modulo null sets, this product $\sigma$-algebra is
generated by rectangles
\begin{equation*}
 C\times(u,v]\times B,
 \qquad C\in\F_u,
 \quad B\subset[0,\pi]\text{ Borel}.
\end{equation*}
Indicators of these rectangles are elementary predictable controls.  A
monotone-class argument therefore shows that elementary predictable controls
are dense in
\begin{equation*}
 L^2\left(
 \Omega\times(0,T]\times[0,\pi],
 dP\,ds\,\frac{d\theta}{\pi}
 \right).
\end{equation*}
Consequently, if a predictable pair $\gamma$ is bounded by $M$, there are
elementary predictable pairs $\gamma_k$ converging to $\gamma$ in this
$L^2$ space.  After refining the finitely many time and angular sets in each
$\gamma_k$ and applying $\Pi_M$ to its adapted coefficient on each resulting
cell, we may also assume $|\gamma_k|\le M$.  The projection is
$1$-Lipschitz, so the $L^2$ convergence is preserved.

Set
\begin{equation*}
 D_k
 :=\int_0^T\int_0^\pi
 |\gamma_k(s,\theta)-\gamma(s,\theta)|^2
 \frac{d\theta}{\pi}\,ds.
\end{equation*}
Then $E \bra{D_k}\to0$ and $0\le D_k\le4M^2T$.  If $1<r\le2$, Jensen's
inequality gives
\begin{equation*}
 E[D_k^{r/2}]\le(E \bra{D_k})^{r/2}\longrightarrow0.
\end{equation*}
If $r>2$, then
\begin{equation*}
 E[D_k^{r/2}]
 \le(4M^2T)^{r/2-1}E\bra{ D_k}
 \longrightarrow0.
\end{equation*}
Thus bounded elementary predictable pairs are dense among bounded
predictable pairs in $\H_T^r$, with the same essential bound.
If, for example, the target has $b=0$, replace an approximating pair
$(a_k,b_k)$ by $(a_k,0)$; this can only decrease its distance from the
target.  Combining this with
\eqref{eq:hardy-stopping-convergence} and
\eqref{eq:hardy-amplitude-convergence}, and taking a diagonal sequence,
proves the asserted density.

For an elementary pair, the process $M_t:=\I_t(a,b)$ is a continuous $\R^2$-valued martingale.  Its total scalar quadratic
variation is
\begin{equation*}
 [M]_T^{\mathrm{tr}}:=\sum_{i=1}^2[M^i]_T
 =\int_0^T\int_0^\pi
 \bigl(|a_s(\theta)|^2+|b_s(\theta)|^2\bigr)
 \frac{d\theta}{\pi}\,ds.
\end{equation*}
The Burkholder--Davis--Gundy inequality gives
\eqref{eq:bdg-forward-controls}.  Density then defines
\eqref{eq:control-stochastic-integral} for every pair in
$\H_T^r$ and preserves the same estimate.

Finally, suppose that $M$ is initially square integrable and $M_T\in L^r$.
The reverse Burkholder--Davis--Gundy inequality and Doob's maximal inequality
imply
\begin{equation*}
 \|([M]_T^{\mathrm{tr}})^{1/2}\|_{L^r}
 \le C_r\|\sup_{t\le T}|M_t|\|_{L^r}
 \le C_r\|M_T\|_{L^r},
\end{equation*}
which is exactly \eqref{eq:bdg-reverse-controls}.
\end{proof}

%-------------------------------------------
\subsection{ODE Study}

\begin{lemma}\label{lem:h}
    For $q>1$, the singular initial-value problem
\begin{equation}\label{eq:h-ode}
    \begin{cases}
        h'(s)=s\pa{q-1-h(s)^2}-s^{-1}h(s)\pa{h(s)^2-1}, &s>0,\\
        h(s)=1+\frac14\pa{q-2}s^2+O(s^4),
        &\text{as }s\downarrow0,
    \end{cases}
\end{equation}
has a unique global positive solution $h=h_q$. Moreover, the solution satisfies
\begin{equation}\label{eq:h-range}
    \min\set{1,\sqrt{q-1}}
    \le h_q(s)\le
    \max\set{1,\sqrt{q-1}},
\end{equation}
with strict inequalities for $s>0$ when $q\ne2$, and
\begin{equation}\label{eq:h-infinity}
    h_q(s)
    =\sqrt{q-1}-\frac{q-2}{2s^2}+O_q(s^{-4})
    \quad\text{as }s\to\infty.
\end{equation}
\end{lemma}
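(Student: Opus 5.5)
The plan is to remove the singularity by the substitution $t=s^2$, $g(t)=h(\sqrt t)$, treat the origin as a Briot--Bouquet singular point, and then get global existence, the bounds \eqref{eq:h-range} and the asymptotics \eqref{eq:h-infinity} from barrier (comparison) arguments. Write $c:=\sqrt{q-1}$ throughout.

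\emph{Local problem at the origin.} Since $h'(s)=2s\,g'(t)$, equation \eqref{eq:h-ode} becomes
\begin{equation*}
 2t\,g'(t)=t\pa{q-1-g^2}-g\pa{g^2-1}.
\end{equation*}
Writing $g=1+w$, this reads $t\,w'=F(t,w)$, where $F$ is analytic near $(0,0)$ and satisfies
\begin{equation*}
F(0,0)=0,\qquad \partial_wF(0,0)=-1,\qquad \partial_tF(0,0)=\tfrac{q-2}{2}.
\end{equation*}
Since $\partial_wF(0,0)=-1$ is not a positive integer, the Briot--Bouquet theorem gives a unique solution analytic at $t=0$ with $w(0)=0$. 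Its first coefficient $w_1$ solves $w_1=-w_1+\frac{q-2}{2}$, so $w(t)=\frac{q-2}{4}t+O(t^2)$. This is exactly the prescribed expansion $h=1+\frac14(q-2)s^2+O(s^4)$.

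For uniqueness, I will subtract two solutions $w,\tilde w$ that are both $o(1)$ as $t\downarrow0$. The difference $\delta=w-\tilde w$ satisfies $t\delta'=(-1+o(1))\delta$, so $|\delta|\gtrsim t^{-1+o(1)}$ unless $\delta\equiv0$. This is incompatible with $\delta\to0$, so the solution is unique.

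\emph{Global existence and the bounds.} Take $q>2$; the case $q<2$ is symmetric, and $q=2$ gives $h\equiv1$. On the line $h=1$ we have $h'=s(q-2)>0$. On the line $h=c$ we have $h'=-s^{-1}c(c^2-1)<0$. Hence the strip $1<h<c$ is forward invariant for $s>0$. The local solution enters this strip immediately, because $h-1\sim\frac{q-2}{4}s^2>0$. It therefore stays bounded and positive, so it extends to all $s>0$, and the inequalities in \eqref{eq:h-range} are strict for $s>0$.

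\emph{Asymptotics at infinity.} For fixed large $s$, let $h_*(s)$ be the root near $c$ of
\begin{equation*}
s^2\pa{c^2-h^2}=h\pa{h^2-1},
\end{equation*}
which is the zero of the right-hand side of \eqref{eq:h-ode}. Expanding $c^2-h^2\approx 2c(c-h)$ gives $h_*=c-\frac{q-2}{2s^2}+O(s^{-4})$. The right-hand side of \eqref{eq:h-ode} is decreasing in $h$ near $c$, with slope of order $-2cs$. So the equilibrium curve $h_*$ is strongly attracting, with rate $\sim s$, while $h_*'=O(s^{-3})$.

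I will build barriers $h_\pm=h_*\pm K s^{-4}$ for $K$ large and show that $h_+$ is a supersolution and $h_-$ a subsolution for $s\ge s_0$. First I need the solution to enter the band between them. For this I plan to use that $h$ is monotone (respectively, that $\int s(c-h)\,ds<\infty$ is forced by boundedness) to get $h\to c$. Once $h$ is in the band, standard comparison keeps it there, which gives \eqref{eq:h-infinity}.

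\emph{Expected obstacle.} I expect the hardest step to be this last one: closing the $O(s^{-4})$ barrier estimate and proving convergence to $c$ before the barriers apply. If monotonicity of $h$ fails, I would instead use a Lyapunov-type estimate on $(h-h_*)^2$, using the contraction rate $\sim s$ against the drift $h_*'=O(s^{-3})$.
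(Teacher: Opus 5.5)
Your outline is sound, and its middle step is exactly the paper's argument: the strip between $1$ and $c=\sqrt{q-1}$ is invariant because of the signs of the vector field on its two boundary lines. The two ends use different tools.

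\emph{At the origin.} The paper substitutes $h=1+s^2(\frac{q-2}{4}+z)$ and runs a contraction for $z$ in a small ball of $C([0,s_0])$. Your Briot--Bouquet argument works because the exponent $-1$ is not a positive integer, and it gives more.
\begin{itemize}
\item It shows that $h$ is analytic in $s^2$.
\item Your difference estimate $t\delta'=(-1+o(1))\delta$ gives uniqueness among \emph{all} solutions with $h(s)\to1$. The paper needs this later and proves it separately, by a bootstrap, in the uniqueness part of \cref{lem:phiqexists}.
\end{itemize}

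\emph{At infinity.} The paper first applies Gronwall to $\delta=h-c$ to get $O(s^{-2})$. It then repeats the argument for $\rho=\delta+\frac{q-2}{2s^2}$ to reach $O(s^{-4})$. Your comparison with the equilibrium curve $h_*$ does both steps at once, since $h_*$ already carries the $s^{-2}$ term.

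\emph{The step that fails as written is entry into the barrier band.}
\begin{itemize}
\item Knowing $h\to c$, or even $h-h_*\to0$, never places $h$ between $h_*\pm Ks^{-4}$, because that band shrinks.
\item Your alternative claim is false. The integral $\int^\infty s(c-h)\,ds$ diverges, since $s(c-h)\sim\frac{q-2}{2s}$. Boundedness of $h$ only gives $\int_1^S s(c^2-h^2)\,ds=O(\log S)$.
\item Monotonicity of $h$ is true but does not help enough. For $q>2$ one has $h<h_*$: indeed $h-1\sim\frac{q-2}{4}s^2$ while $h_*-1\sim\frac{q-2}{2}s^2$, and at a first contact $h'=0<h_*'$. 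This yields convergence but no rate.
\end{itemize}

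\emph{The repair is your own fallback, and it needs neither monotonicity nor prior convergence.} Both $h$ and $h_*$ lie in the closed strip between $1$ and $c$. On that strip the right-hand side $F(s,h)$ of \eqref{eq:h-ode} satisfies $\partial_hF=-2sh-s^{-1}(3h^2-1)\le -c_q s$ for $s\ge s_0$. Hence $\eta:=h-h_*$ satisfies $D^+|\eta|\le -c_q s|\eta|+|h_*'|$.
\begin{itemize}
\item You need $|h_*'|\le Cs^{-3}$. Obtain it from the implicit relation $s^2(c^2-h_*^2)=h_*(h_*^2-1)$, not by differentiating the expansion of $h_*$.
\item Gronwall, with the Duhamel integral split at $s/2$ exactly as in the paper, then gives $\eta=O(s^{-4})$. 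This yields \eqref{eq:h-infinity}.
\end{itemize}
In barrier language, the same estimate shows that $(h-h_+)_+$ and $(h_--h)_+$ decay like $\exp(-c_q(s^2-s_0^2)/2)$. So the solution never needs to enter the band exactly; it ends up within $O(s^{-4})$ of $h_*$ anyway.
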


\begin{proof}
For $q=2$ one has $h_2\equiv1$. Assume $q\ne2$ and put $y_0=(q-2)/4$. Substituting
\begin{equation*}
    h(s)=1+s^2\pa{y_0+z(s)}
\end{equation*}
into \eqref{eq:h-ode} gives
\begin{equation}\label{eq:z-fixed-point}
    z(s)=s^{-4}\int_0^s r^5\F_q(r,z(r))\,dr,
\end{equation}
where
\begin{equation*}
    \F_q(r,z)
    =-2(y_0+z)-3(y_0+z)^2
     -r^2\pa{(y_0+z)^2+(y_0+z)^3}.
\end{equation*}
On a fixed ball of $C([0,s_0])$, the right-hand side of
\eqref{eq:z-fixed-point} has norm $O_q(s_0^2)$ and Lipschitz constant
$O_q(s_0^2)$.  It is a contraction when $s_0$ is sufficiently small.  This proves local existence and uniqueness and gives
\begin{equation}\label{eq:h-origin-expansion}
    h(s)=1+\frac{q-2}{4}s^2+O_q(s^4).
\end{equation}

Set $a=\sqrt{q-1}$ and denote the right-hand side of \eqref{eq:h-ode} by
$F(s,h)$.  At the two boundary values,
\begin{equation}\label{eq:h-boundary-vector-field}
    F(s,1)=s(q-2),
    \qquad
    F(s,a)=-\frac{a(q-2)}s.
\end{equation}
Suppose first that $q>2$.  The expansion \eqref{eq:h-origin-expansion}
places $h$ in $(1,a)$ for small positive $s$.  At a first contact with the
lower boundary $1$, one would have $h'\le0$, contrary to
$F(s,1)>0$; at a first contact with the upper boundary $a$, one would have
$h'\ge0$, contrary to $F(s,a)<0$.  Thus $1<h(s)<a$ for every $s>0$.
When $1<q<2$, the same first-contact argument, with the interval $(a,1)$ and
the signs in \eqref{eq:h-boundary-vector-field} reversed, gives
$a<h(s)<1$.  This proves \eqref{eq:h-range} and its strict form.  Since the
solution remains in a compact subinterval of $(0,\infty)$, the standard
continuation criterion for the nonsingular equation on $[s_0,\infty)$ gives
global existence.

Let $\delta=h-a$.  The equation can be written exactly as
\begin{equation}\label{eq:delta-exact}
    \delta'
    =-s(h+a)\delta-\frac{h(h^2-1)}s.
\end{equation}
The range bound implies, for $s\ge1$,
\begin{equation*}
    D^+|\delta(s)|
    \le-c_qs|\delta(s)|+\frac{C_q}s.
\end{equation*}
The scalar Gronwall inequality therefore gives
\begin{equation}\label{eq:delta-gronwall}
    |\delta(s)|
    \le \e^{-c_q(s^2-1)/2}|\delta(1)|
      +C_q\int_1^s\e^{-c_q(s^2-r^2)/2}\frac{dr}{r}.
\end{equation}
For $s\ge2$, split the integral at $s/2$.  On $[1,s/2]$ one has
$s^2-r^2\ge3s^2/4$.  On $[s/2,s]$ one has $r^{-1}\le2s^{-1}$ and
$s^2-r^2\ge s(s-r)$.  Hence
\begin{equation*}
    \int_1^s\e^{-c_q(s^2-r^2)/2}\frac{dr}{r}
    \le \e^{-3c_qs^2/8}\log s
       +\frac2s\int_0^\infty\e^{-c_qsu/2}\,du
    \le\frac{C_q}{s^2}.
\end{equation*}
After increasing the constant on $[1,2]$, \eqref{eq:delta-gronwall} yields
$\delta(s)=O_q(s^{-2})$, and in particular $h(s)\to a$.

Expanding \eqref{eq:delta-exact} around $a$ gives
\begin{equation*}
    \delta'
    =-2as\delta-\frac{a(q-2)}s
     +O_q\pa{s\delta^2+\frac{|\delta|}{s}}.
\end{equation*}
Define
\begin{equation*}
    \rho(s):=\delta(s)+\frac{q-2}{2s^2}.
\end{equation*}
Since $\delta=O_q(s^{-2})$,
\begin{equation*}
    \rho'
    =-2as\rho
     +O_q\pa{\frac{|\rho|}{s}+s\rho^2+s^{-3}}
    =-2as\rho+O_q(s^{-3}).
\end{equation*}
For a sufficiently large fixed $S$, variation of constants gives
\begin{equation}\label{eq:rho-variation}
    \rho(s)
    =\e^{-a(s^2-S^2)}\rho(S)
     +\int_S^s\e^{-a(s^2-r^2)}O_q(r^{-3})\,dr.
\end{equation}
Splitting the last integral at $s/2$ as above, and using
$r^{-3}\le8s^{-3}$ on $[s/2,s]$, gives an $O_q(s^{-4})$ bound.  Thus
\eqref{eq:rho-variation} proves \eqref{eq:h-infinity}.  Near zero,
$h_q(s)^2-1=O_q(s^2)$; at infinity,
$h_q(s)^2-(q-1)=O_q(s^{-2})$.
\end{proof}

\begin{proof}[Proof of \cref{lem:phiqexists}]
We shall in fact prove the following representation of the solution,
\begin{gather}
    \label{eq:kappa-ode-intro}
    \kappa_q^{\mathrm{ode}}
    :=\exp\set{
       -\int_0^1\frac{h_q(s)^2-1}{s}\,ds
       -\int_1^\infty\frac{h_q(s)^2-(q-1)}{s}\,ds
    },\\
    g_q(s)=q\kappa_q^{\mathrm{ode}}
      \exp\set{\int_0^s\frac{h_q(r)^2-1}{r}\,dr},
    \quad
    \phi_q(s) =\kappa_q^{\mathrm{ode}}+\int_0^s r g_q(r)\,dr,
\end{gather}
and that $\phi_q$ is strictly convex and
\begin{gather}
    \label{eq:profile-hessian}
    \phi_q'(s)=s g_q(s),
    \qquad\phi_q''(s)=h_q(s)^2g_q(s),\\
    \label{eq:profile-ode}
    q\phi_q(s)-s\phi_q'(s)
    =h_q(s)g_q(s)
    =\sqrt{\phi_q''(s)\frac{\phi_q'(s)}s},
\end{gather}
where the value at $s=0$ is interpreted by continuity.

For $q=2$, since $h_2\equiv1$, $\kappa_2^{\mathrm{ode}}=1$ and
$\phi_2(s)=1+s^2$. From now on assume that $q\neq 2$. The estimates on $h_q$ prove the absolute convergence of both integrals in \eqref{eq:kappa-ode-intro}. By definition,
\begin{equation*}
    \frac{g_q'(s)}{g_q(s)}=\frac{h_q(s)^2-1}{s},
\end{equation*}
which gives \eqref{eq:profile-hessian}.  Let
\begin{equation*}
    B(s):=q\phi_q(s)-s^2g_q(s)-h_q(s)g_q(s).
\end{equation*}
Since $g_q(0)=q\kappa_q^{\mathrm{ode}}$, one has $B(0)=0$.  Direct
differentiation using \eqref{eq:h-ode} gives $B'=0$, and hence
\eqref{eq:profile-ode}.  The definition of $\kappa_q^{\mathrm{ode}}$ and
\eqref{eq:h-infinity} imply
\begin{equation}\label{eq:g-normalization}
    \lim_{s\to\infty}\frac{g_q(s)}{q s^{q-2}}=1.
\end{equation}
Indeed, the logarithm of the ratio equals
\begin{equation*}
    -\int_s^\infty\frac{h_q(r)^2-(q-1)}r\,dr,
\end{equation*}
which tends to zero.  Integrating $\phi_q'(s)=s g_q(s)$ proves the final
condition in \eqref{eq:profile-boundary}.  Strict convexity follows from
$g_q>0$ and $h_q>0$.

It remains to prove uniqueness.  Let $\phi$ be any strictly convex $C^2$
solution with the stated boundary conditions, put $g=\phi'/s$ for $s>0$,
and define
\begin{equation*}
    h(s):=\frac{q\phi(s)-s\phi'(s)}{g(s)}.
\end{equation*}
The positive square root in \eqref{eq:profile-ode} gives
$\phi''=h^2g$, whence $g'/g=(h^2-1)/s$; differentiating the definition of
$h$ shows that it solves \eqref{eq:h-ode}.  The conditions at zero imply
$h(s)\to1$.  If $v=h-1$, then
\begin{equation}\label{eq:v-integrated-identity}
    (s^2v)'
    =s^3(q-2)-2s^3v-s^3v^2-3sv^2-sv^3.
\end{equation}
Since $s^2v(s)\to0$, integration from zero gives
\begin{align*}
    s^2v(s)
    =\int_0^s\bra{
       r^3(q-2)-2r^3v(r)-r^3v(r)^2
       -3rv(r)^2-rv(r)^3
    }\,dr.
\end{align*}
For $M(s):=\sup_{0<r\le s}|v(r)|$, division by $t^2$ and taking the
supremum over $0<t\le s$ give, for small $s$,
\begin{equation*}
    M(s)\le C_qs^2+C_qs^2M(s)+C_qM(s)^2+C_qM(s)^3.
\end{equation*}
Because $M(s)\to0$, the last three terms are absorbed after decreasing
$s$, and $M(s)\le C_qs^2$.  Substitution into the integrated identity
\eqref{eq:v-integrated-identity} then gives
\begin{equation*}
    \lim_{s\downarrow0}\frac{v(s)}{s^2}=\frac{q-2}{4}.
\end{equation*}
The local uniqueness obtained from \eqref{eq:z-fixed-point} therefore gives
$h=h_q$.

The equation $g'/g=(h_q^2-1)/s$ determines $g$ up to a positive
multiplicative constant.  By \eqref{eq:h-infinity}, every such solution has
$g(s)\sim C s^{q-2}$ for some $C>0$, and consequently
$\phi(s)/s^q\to C/q$.  Thus the normalization
$\phi(s)/s^q\to1$ is equivalent to $C=q$, which is precisely
\eqref{eq:g-normalization}.  Hence $g=g_q$; the identity
$q\phi-s\phi'=h_qg_q$ then gives $\phi=\phi_q$.
\end{proof}

\begin{lemma}\label{lem:bellman-terminal}
For every $r>0$ and $x\in\R^2$,
\begin{equation}\label{eq:bellman-terminal-rate}
 0\le u_q(r,x)-|x|^q
 \le C_q
 \begin{cases}
  r^{q/2},&1<q\le2,\\
  r|x|^{q-2}+r^{q/2},&q>2.
 \end{cases}
\end{equation}
In particular, $u_q(r,\cdot)\to|\cdot|^q$ locally uniformly as
$r\downarrow0$.  If $\sY_r$, $0<r\leq 1$ is a $L^q$-bounded random field,
\begin{equation}\label{eq:random-terminal-rate}
 E\big[u_q(r,\sY_r)-|\sY_r|^q\big]\longrightarrow0.
\end{equation}
\end{lemma}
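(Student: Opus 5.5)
We work directly with the self-similar representation $u_q(r,x)=r^{q/2}\phi_q(|x|/\sqrt r)$. With $s=|x|/\sqrt r$, the identity $|x|^q=r^{q/2}s^q$ shows that \eqref{eq:bellman-terminal-rate} is equivalent to a bound on $D(s):=\phi_q(s)-s^q$, namely
\begin{equation*}
 0\le D(s)\le C_q\begin{cases}1,&1<q\le2,\\ s^{q-2}+1,&q>2,\end{cases}
\end{equation*}
since $r^{q/2}s^{q-2}=r|x|^{q-2}$. So the plan is to study $D$ on $[0,\infty)$ using the representation of $\phi_q$ from the proof of \cref{lem:phiqexists}.

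\textbf{Lower bound $D\ge0$.} We have $\phi_q(s)-s^q\to$ (limit of $D$) at infinity, and $D(0)=\kappa_q>0$. We use $\phi_q'(s)=sg_q(s)$ and $g_q(s)=qs^{q-2}\exp\{-\int_s^\infty (h_q^2-(q-1))/r\,dr\}$, which follows from the normalization \eqref{eq:g-normalization}. Then $D'(s)=qs^{q-1}(e^{-I(s)}-1)$ with $I(s)=\int_s^\infty(h_q^2-(q-1))/r\,dr$. By \eqref{eq:h-range}, for $q>2$ we have $h_q^2<q-1$, so $I<0$ and $D$ is increasing. Combined with $D(0)>0$, this gives $D\ge0$. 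For $q<2$, $I>0$ and $D$ is decreasing, so we need $\lim_{s\to\infty}D(s)\ge0$. By \eqref{eq:h-infinity}, $I(s)=O(s^{-2})$, so $D'(s)=O(s^{q-3})$, which is integrable at infinity since $q<2$. Hence $D$ has a finite limit; call it $D_\infty$. To see $D_\infty\ge0$, we use the ODE \eqref{eq:ode}: $q\phi_q-s\phi_q'=h_qg_q>0$, so $qD-sD'=h_qg_q-0>0$ (note $q s^q-s\cdot qs^{q-1}=0$). Since $sD'\to0$ (it is $O(s^{q-2})$), we get $qD_\infty=\lim h_qg_q=\lim \sqrt{q-1}\,qs^{q-2}=0$, so $D_\infty=0$ and monotonicity gives $D\ge0$. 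The same identity, $qD=sD'+h_qg_q$, in fact controls $D$ in every case.

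\textbf{Upper bound.} We use $qD=sD'+h_qg_q$. For $q\le2$, both terms are bounded: $sD'=O(s^{q-2})$ at infinity and everything is continuous at $0$, while $h_qg_q\lesssim s^{q-2}$ at infinity, bounded near $0$. This gives $D\le C_q$. For $q>2$, we have $h_qg_q\lesssim 1+s^{q-2}$ and $|sD'|=qs^q|e^{-I}-1|\lesssim s^{q}s^{-2}$, so $D\lesssim 1+s^{q-2}$. Local uniform convergence then follows: the bound is $O(r^{\min(1,q/2)})$ uniformly on bounded sets of $x$.

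\textbf{Random statement.} For \eqref{eq:random-terminal-rate} with $q\le2$, the bound $C_qr^{q/2}$ is deterministic. For $q>2$ we need $E[r|\sY_r|^{q-2}]\le r\,(E|\sY_r|^q)^{(q-2)/q}\to0$ by Hölder and $L^q$-boundedness. The only delicate point is the $q<2$ identification $D_\infty=0$, and that is handled by the identity $qD=sD'+h_qg_q$.
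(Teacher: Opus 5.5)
Your argument is correct, but it uses the key identity differently from the paper. Your relation $qD-sD'=h_qg_q$ is exactly the paper's formula $\partial_r u_q(r,x)=\tfrac12 r^{q/2-1}h_q(s)g_q(s)$, written in the similarity variable.

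The paper simply integrates this formula in $r$ from $0$, using $u_q(t,x)\to|x|^q$, which is immediate from $\phi_q(s)/s^q\to1$. Positivity of $h_qg_q$ then gives the lower bound. The growth bound on $h_qg_q$, integrated against $t^{q/2-1}\,dt$, gives the upper bound. In your variables this is the integrating-factor identity $(s^{-q}D)'=-s^{-q-1}h_qg_q$, i.e.\ $D(s)=s^q\int_s^\infty t^{-q-1}h_q(t)g_q(t)\,dt$, which needs only $s^{-q}D\to0$.

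You instead read the identity algebraically. That forces you to control $sD'$ separately via $D'=qs^{q-1}(e^{-I}-1)$, which uses:
\begin{itemize}
\item the strict sign information in \eqref{eq:h-range};
\item the rate $I(s)=O(s^{-2})$ from \eqref{eq:h-infinity};
\item for $q<2$, the stronger fact $D\to0$.
\end{itemize}
Incidentally, $D\to0$ follows from $qD=sD'+h_qg_q\to0$ alone, so you do not need the integrability of $D'$.

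The paper's route is shorter and needs weaker input (only $h_q>0$, boundedness of $h_q$, and $g_q\sim qs^{q-2}$). Yours buys monotonicity of $D$, and hence sharp constants: $0\le u_q(r,x)-|x|^q\le\kappa_q r^{q/2}$ for $q\le2$, and $u_q(r,x)-|x|^q\ge\kappa_q r^{q/2}$ for $q\ge2$.

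Your sign dichotomy omits $q=2$, but there $\phi_2=1+s^2$ and $D\equiv1$ trivially.
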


\begin{proof}
Put $s=|x|/\sqrt r$.  Differentiating the self-similar formula and using
\eqref{eq:profile-ode} gives
\begin{equation}\label{eq:bellman-time-derivative}
 \partial_r u_q(r,x)
 =\frac12r^{q/2-1}\big(q\phi_q(s)-s\phi_q'(s)\big)
 =\frac12r^{q/2-1}h_q(s)g_q(s)>0.
\end{equation}
The range bound for $h_q$, continuity of $g_q$, and
\eqref{eq:g-normalization} imply
\begin{equation}\label{eq:hg-growth}
 h_q(s)g_q(s)
 \le C_q
 \begin{cases}
  1,&1<q\le2,\\
  1+s^{q-2},&q>2.
 \end{cases}
\end{equation}
For fixed $x$, integrate \eqref{eq:bellman-time-derivative} from zero to
$r$, using $u_q(t,x)\to|x|^q$ as $t\downarrow0$.  The bound
\eqref{eq:hg-growth} is integrable at zero and gives exactly
\eqref{eq:bellman-terminal-rate}.  Local uniform convergence follows from
the same estimate.  Finally, for $q>2$, a uniform $q$-moment bounds the
$(q-2)$-moment uniformly; for $q\le2$, the right-hand side of
\eqref{eq:bellman-terminal-rate} is deterministic.  Taking expectations
proves \eqref{eq:random-terminal-rate}.
\end{proof}

\begin{lemma}
\label{lem:feedback-regularity}
Let $\beta_q(s):=h_q(s)^2-1$.  For $r>0$, $x\ne0$, and
$\theta\in[0,\pi]$, define
\begin{equation}\label{eq:feedback-definition}
 a_q^\star(r,x,\theta)
 :=-
 \frac{\beta_q(|x|/\sqrt r)
 (e_\theta^\perp\cdot\widehat x)(e_\theta\cdot\widehat x)}
 {1+\beta_q(|x|/\sqrt r)
 (e_\theta^\perp\cdot\widehat x)^2},
 \qquad \widehat x:=\frac{x}{|x|},
\end{equation}
and set $a_q^\star(r,0,\theta)=0$.  There is $C_q<\infty$ such that
\begin{align}
 \sup_{r>0,x,\theta}|a_q^\star(r,x,\theta)|&\le C_q,
 \label{eq:feedback-uniform-bound}\\
 \left(\int_0^\pi
 |a_q^\star(r,x,\theta)-a_q^\star(r,y,\theta)|^2
 \frac{d\theta}{\pi}\right)^{1/2}
 &\le C_qr^{-1/2}|x-y|.
 \label{eq:feedback-lipschitz}
\end{align}
Consequently, for every $\tau>\varepsilon>0$, the equation
\begin{equation}\label{eq:feedback-sde}
 \sX_t=x+
 \int_0^t\int_0^\pi
 \big(e_\theta+a_q^\star(\tau-s,\sX_s,\theta)e_\theta^\perp\big)
 \,\sw^1(ds,d\theta),
 \qquad 0\le t\le\tau-\varepsilon,
\end{equation}
has a unique strong solution.  For every $m>0$,
\begin{equation}\label{eq:feedback-moment-bound}
 \sup_{0<\varepsilon<\tau}
 E\left[\sup_{t\le\tau-\varepsilon}|\sX_t|^m\right]
 \le C_{m,q}\big(|x|^m+\tau^{m/2}\big).
\end{equation}
\end{lemma}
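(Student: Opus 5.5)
The plan is to treat the feedback as an explicit function of $\beta=\beta_q(|x|/\sqrt r)$ and of the angle between $e_\theta$ and $\widehat x$. Once the two estimates on the feedback are in place, standard Lipschitz SDE theory on the truncated interval $[0,\tau-\varepsilon]$ gives the solution, and BDG gives the moment bound.

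\emph{Uniform bound.} By \cref{lem:h}, $h_q$ takes values in $[\min\{1,\sqrt{q-1}\},\max\{1,\sqrt{q-1}\}]$. Hence $\beta_q\in[\min\{0,q-2\},\max\{0,q-2\}]$ and $1+\beta_q\ge \min\{1,q-1\}>0$. Write $c=e_\theta^\perp\cdot\widehat x$ and $d=e_\theta\cdot\widehat x$, so that $c^2+d^2=1$. The denominator is $1+\beta c^2\ge\min\{1,1+\beta\}\ge\min\{1,q-1\}$, and the numerator is at most $|\beta|\le|q-2|$. This gives \eqref{eq:feedback-uniform-bound}. The feedback is exactly the minimizer in \eqref{eq:angular-minimization} for $H=D_x^2u_q$, since $D^2u_q$ is proportional to $\mathrm{id}+\beta\,\widehat x\otimes\widehat x$ by \eqref{eq:profile-hessian}. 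This fact is not needed for the estimates.

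\emph{Lipschitz bound.} The map $(\beta,\widehat x)\mapsto a$ is smooth on the compact range set, because the denominator is bounded below. So $|a(r,x,\theta)-a(r,y,\theta)|\le C(|\beta(x)-\beta(y)|+|\widehat x-\widehat y|)$. The radial factor is controlled by $|\beta_q'(s)|\le C_q$, which holds because $h_q'$ is bounded by the ODE \eqref{eq:h-ode} together with the expansions at $0$ and $\infty$. This gives $|\beta(x)-\beta(y)|\le C_q r^{-1/2}|x-y|$.

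The angular factor needs more care, since $|\widehat x-\widehat y|\le 2|x-y|/\max(|x|,|y|)$ blows up near the origin. The saving is that $\beta_q(s)=O(s^2)$ as $s\to0$. I would write $a=\beta\cdot\Psi(\beta,\widehat x,\theta)$ with $\Psi$ smooth and bounded. Then the term $\beta\,|\widehat x-\widehat y|$ is at most $C\min(1,|x|^2/r)\cdot|x-y|/|x|$, taking $|x|\ge|y|$ without loss of generality. This is at most $C r^{-1/2}|x-y|$, because $\min(1,s^2)/s\le 1$ with $s=|x|/\sqrt r$. The case $y=0$ is handled the same way, using $|a(r,x,\cdot)|\le C|\beta|\le C\min(1,|x|^2/r)\le C|x|/\sqrt r$. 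The bound holds pointwise in $\theta$, so its $L^2(d\theta/\pi)$ version follows. I expect this near-origin cancellation to be the main point to check carefully.

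\emph{SDE.} On $[0,\tau-\varepsilon]$ we have $\tau-s\ge\varepsilon$, so the coefficient $x\mapsto e_\theta+a^\star(\tau-s,x,\theta)e_\theta^\perp$ is globally Lipschitz in $L^2(d\theta/\pi)$, with constant $C_q\varepsilon^{-1/2}$, and it is bounded. The standard Picard iteration for SDEs driven by a cylindrical white noise then gives strong existence and uniqueness; the It\^o isometry supplies the contraction estimate.

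\emph{Moments.} The integrand has $L^2(d\theta/\pi)$-norm at most $(1+C_q^2)^{1/2}$ uniformly, so the square function over $[0,t]$ is bounded by $Ct$. BDG then gives $E[\sup_{t\le\tau-\varepsilon}|\sX_t-x|^m]\le C_m\tau^{m/2}$ for every $m>0$, uniformly in $\varepsilon$; for $m<2$ one uses Jensen. Combining this with $|\sX_t|^m\le C_m(|x|^m+|\sX_t-x|^m)$ yields \eqref{eq:feedback-moment-bound}.
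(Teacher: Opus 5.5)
Your proposal is correct and follows essentially the same route as the paper: the uniform bound comes from the range of $h_q$ and the denominator bound $\min\{1,q-1\}$, the Lipschitz estimate from $\sup_s|\beta_q'(s)|<\infty$ together with $\beta_q(s)=O(s^2)$ at the origin, and existence and moments from Picard iteration and BDG. The paper merely packages your finite-difference radial/angular split as the derivative bound $|DF_\theta(y)|\le C_q\bigl(|\beta_q'(|y|)|+|\beta_q(|y|)|/|y|\bigr)$ for $F_\theta(y)=a_q^\star(1,y,\theta)$ and then rescales by $\sqrt r$, and your near-origin bookkeeping via $\min(1,s^2)/s\le 1$ is exactly its condition $\sup_s|\beta_q(s)|/s<\infty$.
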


\begin{proof}
The origin expansion \eqref{eq:h-origin-expansion}, inserted into
\eqref{eq:h-ode}, gives
\begin{equation*}
 \beta_q(s)=O_q(s^2),
 \qquad
 \beta_q'(s)=O_q(s)
 \qquad(s\downarrow0).
\end{equation*}
At infinity, put $a=\sqrt{q-1}$.  By
\eqref{eq:h-infinity},
\begin{equation*}
 h_q(s)=a-\frac{q-2}{2s^2}+O_q(s^{-4}),
 \qquad
 \beta_q(s)=q-2+O_q(s^{-2}).
\end{equation*}
Inserting the first expansion into the two terms on the right-hand side of
\eqref{eq:h-ode} gives separately
\begin{align*}
 s\big(q-1-h_q(s)^2\big)
 &=\frac{a(q-2)}s+O_q(s^{-3}),\\
 \frac{h_q(s)(h_q(s)^2-1)}s
 &=\frac{a(q-2)}s+O_q(s^{-3}).
\end{align*}
The leading terms cancel, so $h_q'(s)=O_q(s^{-3})$, without any
differentiation of the remainder in \eqref{eq:h-infinity}.  Consequently,
\begin{equation*}
 \beta_q'(s)=O_q(s^{-3})
 \qquad(s\to\infty).
\end{equation*}
Together with smoothness on compact subintervals of $(0,\infty)$, this
proves
\begin{equation}\label{eq:beta-derivative-bound}
 \sup_{s>0}
 \left(|\beta_q'(s)|+\frac{|\beta_q(s)|}{s}\right)<\infty.
\end{equation}
The range bound \eqref{eq:h-range} gives, for $t\in[0,1]$,
\begin{equation}\label{eq:feedback-denominator}
 1+\beta_q(s)t
 =(1-t)+t h_q(s)^2
 \ge \min\{1,q-1\}>0.
\end{equation}
This proves \eqref{eq:feedback-uniform-bound}.

For $y=s\omega\ne0$, with $\omega\in S^1$, write
\begin{equation*}
 F_\theta(y)
 :=-
 \frac{\beta_q(s)(e_\theta^\perp\cdot\omega)
 (e_\theta\cdot\omega)}
 {1+\beta_q(s)(e_\theta^\perp\cdot\omega)^2}.
\end{equation*}
Direct differentiation, using \eqref{eq:feedback-denominator}, gives
uniformly in $\theta$ and $\omega$
\begin{equation*}
 |\partial_sF_\theta(s\omega)|\le C_q|\beta_q'(s)|,
 \qquad
 |\nabla_{S^1}F_\theta(s\omega)|\le C_q|\beta_q(s)|.
\end{equation*}
Hence, on $\R^2\setminus\{0\}$,
\begin{equation*}
 |D F_\theta(y)|
 \le C_q\left(|\beta_q'(|y|)|+
 \frac{|\beta_q(|y|)|}{|y|}\right).
\end{equation*}
The right-hand side is uniformly bounded by
\eqref{eq:beta-derivative-bound} and tends to zero as $y\to0$.  Also
$|F_\theta(y)|\le C_q|y|^2$ near zero.  Thus $F_\theta$ extends to a
$C^1$ function on $\R^2$, with $D F_\theta(0)=0$, and is globally
Lipschitz with a constant independent of $\theta$.  Since
\begin{equation*}
 a_q^\star(r,x,\theta)=F_\theta(x/\sqrt r),
\end{equation*}
we obtain the stronger pointwise version of
\eqref{eq:feedback-lipschitz}, and hence that estimate after integration in
$\theta$.

Let $\Theta=[0,\pi]$.
Let $\HH_\Theta:=L^2(\Theta,d\theta/\pi)$.  The white noise
$\sw^1$ induces a cylindrical Brownian motion on $\HH_\Theta$.  Let
$\Sigma_r(x):\HH_\Theta\to\R^2$ be the Hilbert--Schmidt operator with
kernel
\begin{equation*}
 \theta\longmapsto
 e_\theta+a_q^\star(r,x,\theta)e_\theta^\perp.
\end{equation*}
Equations \eqref{eq:feedback-uniform-bound} and
\eqref{eq:feedback-lipschitz} imply
\begin{equation*}
 \|\Sigma_r(x)\|_{\mathrm{HS}}\le C_q,
 \qquad
 \|\Sigma_r(x)-\Sigma_r(y)\|_{\mathrm{HS}}
 \le C_qr^{-1/2}|x-y|.
\end{equation*}
For $0\le s\le\tau-\varepsilon$, the coefficient
$\Sigma_{\tau-s}$ is therefore globally Lipschitz uniformly in time and
uniformly bounded.  Picard iteration, using It\^o's isometry and Gronwall's
inequality, gives pathwise uniqueness and a unique strong solution of
\eqref{eq:feedback-sde}.  Finally, the Burkholder--Davis--Gundy inequality
and the uniform Hilbert--Schmidt bound give \eqref{eq:feedback-moment-bound}
for $m\ge2$; the remaining positive exponents follow from Jensen's
inequality.
\end{proof}

\begin{proof}[Proof of \cref{lem:ucontrol}]
For a radial function, the eigenvalues of the Hessian are the radial and tangential second derivatives.  By \eqref{eq:profile-hessian}, the eigenvalues of $D_x^2u_q$ are
\begin{equation}\label{eigenD2u}
    \lambda_{\rm rad}=\tau^{q/2-1}h_q(s)^2g_q(s),
    \qquad \textrm{and } \qquad 
    \lambda_{\rm tan}=\tau^{q/2-1}g_q(s),
    \qquad s=|x|/\sqrt\tau.
\end{equation}
They are positive, and \eqref{eq:profile-ode} gives \eqref{eq:parabolic-monge-ampere}.  \cref{eq:h-origin-expansion} gives $C^{1,2}$ regularity at $x=0$. \Cref{lem:bellman-terminal} gives the terminal condition, locally uniformly in $x$.  We shall also use
\begin{equation}\label{eq:bellman-growth}
    |x|^q\le u_q(\tau,x)
    \le C_q\pa{|x|^q+\tau^{q/2}}.
\end{equation}
The lower bound follows from \eqref{eq:bellman-time-derivative}.  For $q>2$, the upper bound follows from \eqref{eq:bellman-terminal-rate} and Young's inequality $\tau|x|^{q-2}\le C_q(|x|^q+\tau^{q/2})$; for $q\le2$, it follows directly from the same terminal estimate.  The formulas for the profile also
give, for every $0<\varepsilon<\tau<\infty$,
\begin{align}
    |\nabla_xu_q(r,x)|
    &\le C_{q,\varepsilon,\tau}\pa{1+|x|^{q-1}},
    \label{eq:bellman-gradient-growth}\\
    |\partial_r u_q(r,x)|+\|D_x^2u_q(r,x)\|
    &\le C_{q,\varepsilon,\tau}\pa{1+|x|^q},
    \qquad \varepsilon\le r\le\tau.
    \label{eq:bellman-derivative-growth}
\end{align}
Indeed, these estimates follow from \eqref{eq:profile-hessian},
\eqref{eq:bellman-time-derivative}, and the large-$s$ behavior
$g_q(s)\sim q s^{q-2}$.

Fix $M<\infty$ and $(a,b)\in\A_M(\tau)$, and let $\sX$ be the process in \eqref{eq:value-function}.  Its diffusion coefficient has uniformly bounded Hilbert--Schmidt norm, with a bound depending only on $M$.  The Burkholder--Davis--Gundy inequality therefore gives, for every $m>0$,
\begin{equation}\label{eq:bounded-control-moments}
    E\bra{\sup_{s\le\tau}|\sX_s|^m}<\infty.
\end{equation}
Set $H:=D_x^2u_q(\tau-s,\sX_s)$. For $0<\varepsilon<\tau$, applying It\^o's formula to $M_s:=u_q(\tau-s,\sX_s)$ on $[0,\tau-\varepsilon]$, initially stopped when $|\sX|$ reaches a radius $R$, 
we have 
\begin{equation}\label{ito}
    dM_s= \mu(s,\sX_s) ds + D_x u_q(\tau-s,\sX_s)  \cdot d\sX_s,
\end{equation}
with drift,
\begin{align*}
    \mu=-\partial_\tau u_q
    +\frac12\int_0^\pi
    \left(
        (e_\theta+a_s e_\theta^\perp)^\top
        H(e_\theta+a_s e_\theta^\perp)+b_s^2(e_\theta^\perp)^\top He_\theta^\perp
    \right)\frac{d\theta}{\pi}.
\end{align*}
By \eqref{eq:parabolic-monge-ampere}, we may rewrite this as 
\begin{equation}\label{drift}
\mu=\frac{1}{2}\left(-\sqrt{\det H}
    +\int_0^\pi
    \left(
        (e_\theta+a_s e_\theta^\perp)^\top
        H(e_\theta+a_s e_\theta^\perp)+b_s^2(e_\theta^\perp)^\top He_\theta^\perp
    \right)\frac{d\theta}{\pi}\right).
\end{equation}
Using \eqref{eq:angular-minimization} we see that  this drift is nonnegative.  For each $s$, let $\Sigma_s^{a,b}:\HH_\Theta\oplus\HH_\Theta\to\R^2$ be the Hilbert--Schmidt operator
\begin{equation*}
    \Sigma_s^{a,b}(f,g)
    :=\int_0^\pi\left[
        \big(e_\theta+a_s(\theta)e_\theta^\perp\big)f(\theta)
        +b_s(\theta)e_\theta^\perp g(\theta)
    \right]\frac{d\theta}{\pi}.
\end{equation*}
On the region $\tau-s\ge\varepsilon$,
\eqref{eq:bellman-gradient-growth} and
\eqref{eq:bounded-control-moments} imply
\begin{equation}\label{eq:verification-martingale-integrability}
    E\bra{\int_0^{\tau-\varepsilon}
    |\nabla_xu_q(\tau-s,\sX_s)|^2
    \|\Sigma_s^{a,b}\|_{\mathrm{HS}}^2\,ds}<\infty.
\end{equation}
Thus the stochastic integral in the unstopped It\^o formula is a true
martingale.
Equations \eqref{eq:bellman-growth}, \eqref{eq:bellman-derivative-growth}, and
\eqref{eq:bounded-control-moments} also give the uniform integrability needed
to let $R\to\infty$.  Integrating \eqref{ito} and taking the expectation  yields, as the expectation of the martingale part vanishes,
\begin{equation}\label{eq:verification-lower-epsilon}
    E \bra{u_q(\varepsilon,\sX_{\tau-\varepsilon})}
    \ge u_q(\tau,x).
\end{equation}

The same bracket estimate on the last time interval gives, for every $m>0$,
\begin{equation}\label{eq:last-interval-convergence}
    \|\sX_\tau-\sX_{\tau-\varepsilon}\|_{L^m}
    \le C_{m,M,\tau}\sqrt\varepsilon.
\end{equation}
By \eqref{eq:last-interval-convergence},
$E\bra{|\sX_{\tau-\varepsilon}|^q}\to E|\sX_\tau|^q$.  Moreover,
\eqref{eq:bounded-control-moments} and
\cref{lem:bellman-terminal} give
\begin{equation*}
    E\bra{u_q(\varepsilon,\sX_{\tau-\varepsilon})
    -|\sX_{\tau-\varepsilon}|^q}\longrightarrow0.
\end{equation*}
Letting $\varepsilon\downarrow0$ in
\eqref{eq:verification-lower-epsilon} therefore yields
\begin{equation}\label{eq:verification-lower}
    E\bra{|\sX_\tau|^q}\ge u_q(\tau,x).
\end{equation}
Since the map $\sY\mapsto E\bra{|\sY|^q}$ is continuous on $L^q$, the same
inequality holds on the terminal-value $L^q$ closure of bounded predictable
controls.

We prove the reverse inequality by an explicit minimizing sequence.  For
remaining time $r>0$, the Hessian is
\begin{equation*}
    D_x^2u_q(r,x)
    =r^{q/2-1}g_q(|x|/\sqrt r)
    \bra{\id+\beta_q(|x|/\sqrt r)
    \widehat x\otimes\widehat x}
\end{equation*}
for $x\ne0$, with its isotropic continuous extension at zero.  Arguing as in the proof of \eqref{eq:angular-minimization} shows that the minimizing
first-channel control is precisely the feedback defined in
\eqref{eq:feedback-definition}, and that the minimizing second-channel
control is zero.  By \cref{lem:feedback-regularity}, for every
$\varepsilon>0$ the feedback SDE on $[0,\tau-\varepsilon]$ has a unique
strong solution $\sX^\varepsilon$, the controls in
\eqref{eq:epsilon-feedback} have a deterministic bound independent of
$\varepsilon$, and the moments in \eqref{eq:feedback-moment-bound} are
uniform.

For this feedback, equality holds in the angular minimization.  The same
localized It\^o argument as above, now with zero drift, gives
\begin{equation}\label{eq:feedback-identity}
    E\bra{ u_q(\varepsilon,\sX^\varepsilon_{\tau-\varepsilon})}
    =u_q(\tau,x).
\end{equation}
Use zero control on the last interval.  Conditional on
$\sX^\varepsilon_{\tau-\varepsilon}=y$, the last increment is a centered
Gaussian vector of covariance $\varepsilon\id/2$.  If $P_\varepsilon$
denotes the corresponding heat semigroup, the lower bound
\eqref{eq:verification-lower}, applied on an interval of length
$\varepsilon$, gives
\begin{equation*}
    |y|^q\le u_q(\varepsilon,y)
    \le P_\varepsilon(|\cdot|^q)(y).
\end{equation*}
Since
\begin{align*}
    |y+z|^q-|y|^q-q|y|^{q-2}y\cdot z
    &\le C_q|z|^q,
    &&1<q\le2,\\
    |y+z|^q-|y|^q-q|y|^{q-2}y\cdot z
    &\le C_q\pa{|y|^{q-2}|z|^2+|z|^q},
    &&q>2,
\end{align*}
with the linear term interpreted as zero at $y=0$, applying the heat-flow gives as the
linear term vanishes,
\begin{equation}\label{eq:heat-remainder}
    0\le P_\varepsilon(|\cdot|^q)(y)-|y|^q
    \le C_q
    \begin{cases}
        \varepsilon^{q/2},&1<q\le2,\\
        \varepsilon |y|^{q-2}+\varepsilon^{q/2},&q>2.
    \end{cases}
\end{equation}
Using \eqref{eq:feedback-identity}, \eqref{eq:heat-remainder}, conditioning at
time $\tau-\varepsilon$, and the uniform moment bound, we obtain
\begin{align*}
    0
    \le E\bra{|\sX^\varepsilon_\tau|^q}-u_q(\tau,x)
    =E\bra{
    P_\varepsilon(|\cdot|^q)
    (\sX^\varepsilon_{\tau-\varepsilon})
    -u_q(\varepsilon,\sX^\varepsilon_{\tau-\varepsilon})}
    \longrightarrow0.
\end{align*}
Thus bounded predictable feedbacks form a minimizing sequence.

Use now, for $\varepsilon\in(0,\tau)$, on $[0,\tau-\varepsilon]$ the feedback
\begin{equation}\label{eq:epsilon-feedback}
    a_s^{\varepsilon}(\theta)
    =a_q^\star(\tau-s,\sX_s^{\varepsilon},\theta),
    \qquad b_s^{\varepsilon}=0,
\end{equation}
and use $a_s^{\varepsilon}=b_s^{\varepsilon}=0$ on the last interval. We shall prove that the corresponding costs converge to $u_q(\tau,x)$ as $\varepsilon\downarrow0$.

Choose
$\varepsilon_j\downarrow0$.  By \eqref{eq:feedback-uniform-bound}, all
feedback pairs $(a^{\varepsilon_j},0)$ have a common deterministic bound.
Apply \cref{lem:predictable-density} on $[0,\tau-\varepsilon_j]$, extend the
approximants by zero on the last interval, and choose a bounded elementary
control $\widetilde a^j$ satisfying
\begin{equation*}
    \|(\widetilde a^j-a^{\varepsilon_j},0)\|_{\H_\tau^q}
    \le j^{-1}.
\end{equation*}
The approximants may be chosen with the same common bound.  If
$\widetilde{\sX}^j_\tau$ is the corresponding terminal state, then
\eqref{eq:bdg-forward-controls} gives
\begin{equation*}
    \|\widetilde{\sX}^j_\tau-
    \sX^{\varepsilon_j}_\tau\|_{L^q}
    \le C_qj^{-1}.
\end{equation*}
The two families are uniformly bounded in $L^q$.  Hence
\begin{equation*}
    \left|E\bra{|\widetilde{\sX}^j_\tau|^q}
    -E\bra{|\sX^{\varepsilon_j}_\tau|^q}\right|
    \longrightarrow0,
\end{equation*}
for instance by
$|\,|z|^q-|w|^q|\le C_q(|z|^{q-1}+|w|^{q-1})|z-w|$ and H\"older's
inequality.  This proves \eqref{eq:value-function} for bounded elementary
controls and for their terminal-value closure.

Finally, the constant $c$ in \eqref{eq:control-problem} is the initial state in \eqref{eq:value-function}.  Strict convexity and radial symmetry show that
$\inf_{c\in\R^2}u_q(1,c)=u_q(1,0)$.  Therefore
\begin{equation*}
    \kappa_q
    =u_q(1,0)
    =\phi_q(0)
    =\kappa_q^{\mathrm{ode}},
\end{equation*}
which proves $\kappa_q
    =u_q(1,0)
    =\phi_q(0)$.
\end{proof}

\begin{proof}[Proof of \cref{lem:kappaqestimates}]
Taking zero control and $c=0$ in \eqref{eq:control-problem} gives the upper bound.  If $\sU=\sG+\sV$ is admissible, then
\begin{equation*}
 E[\sU\cdot\sG]=E\bra{|\sG|^2}=1
\end{equation*}
because $\sG\in\Ran\Qscale$ and
$\sV\in\Ran\Pscale$.  H\"older's inequality and the explicit formula for moments of $\sG$ give
\begin{equation*}
 1\le(E\bra{|\sU|^q})^{1/q}
 \Gamma\left(1+\frac p2\right)^{1/p},
\end{equation*}
which is the lower bound. For the strict inequality, let
\begin{equation*}
 \sG_s:=\int_0^s\int_0^\pi e_\theta\,\sw^1(dr,d\theta)
\end{equation*}
be the zero-control state. Set  $H(r,x)=D_x^2u_q(r,x)$ and 
\begin{equation*}
    \cD_q(r,x)
 :=\frac14\trace H(r,x)-\frac12\sqrt{\det H(r,x)}
 =\frac14\left(\sqrt{\lambda_{\mathrm{rad}}(r,x)}
 -\sqrt{\lambda_{\mathrm{tan}}(r,x)}\right)^2\ge0,
\end{equation*}
where $\lambda_{\mathrm{rad}}$ (respectively $\lambda_{\mathrm{tan}}$) are the radial (respectively the tangential) eigenvalues of $H$. Since 
\[
\frac12\trace H =\int_0^\pi
        e_\theta^\top
        He_\theta\frac{d\theta}{\pi},
\]
It\^o's formula up to time
$1-\varepsilon$ and \eqref{drift} give
\begin{equation*}
 E \bra{u_q(\varepsilon,\sG_{1-\varepsilon})}-u_q(1,0)
 =\int_0^{1-\varepsilon} \,
 E\bra{\cD_q(1-s,\sG_s)}\,ds.
\end{equation*}
The left-hand side converges to $E\bra{|\sG|^q}-u_q(1,0)$ by
\cref{lem:bellman-terminal} and
$\sG_{1-\varepsilon}\to\sG$ in $L^q$.  The right-hand side
converges by monotone convergence because its integrand is nonnegative.
Therefore
\begin{equation}\label{eq:strict-drift-identity}
 E\bra{|\sG|^q}-u_q(1,0)
 =\int_0^1E\bra{\,\cD_q(1-s,\sG_s)}\,ds.
\end{equation}
Assume $q\ne2$.  By the strict range statement in
\eqref{eq:h-range} and \eqref{eigenD2u}, the radial and tangential eigenvalues are unequal
whenever $r>0$ and $x\ne0$.  Hence $\cD_q(r,x)>0$ on that
set.  More quantitatively, on the compact set
\begin{equation*}
 \set{(r,x):\frac12\le r\le\frac34,
 1\le|x|\le2},
\end{equation*}
continuity gives $\cD_q\ge c_q>0$.  For
$s\in[1/4,1/2]$, the Gaussian vector $\sG_s$ is non-degenerate, and
\begin{equation*}
 \inf_{s\in[1/4,1/2]}
 P(1\le|\sG_s|\le2)>0.
\end{equation*}
The right-hand side of \eqref{eq:strict-drift-identity} is therefore strictly positive. Since $E\bra{|\sG|^q}=\Gamma(1+q/2)$, this concludes.
\end{proof}

%-------------------------------------------
\subsection{Projection Operators}

\begin{proof}[Proof of \cref{lem:scale-projections}]
The $L^2$ assertions follow from It\^o's isometry,
$P_\theta+Q_\theta=\id$, and $P_\theta Q_\theta=0$.
To prove the $L^r$ estimate, it is enough to consider
$\sF\in L_1^2\cap L_1^r$, which is dense in $L_1^r$.
Define
\begin{equation*}
 \sM_t:=E[\sF\mid\F_t],
 \qquad
 \widetilde{\sM}_t:=\sM_t-E[\sF],
\end{equation*}
and use \eqref{eq:white-noise-representation} to write
\begin{equation*}
 \widetilde{\sM}_t
 =\sum_{a=1}^2\int_0^t\int_0^\pi
 \Phi_{\sF}^a(s,\theta)\,\sw^a(ds,d\theta).
\end{equation*}
The centered martingale with terminal value $\Pscale\sF-E[\sF]$ is
\begin{equation*}
 \sN_t
 :=\sum_{a=1}^2\int_0^t\int_0^\pi
 P_\theta\Phi_{\sF}^a(s,\theta)\,\sw^a(ds,d\theta).
\end{equation*}
For an $\R^2$-valued continuous martingale $\sK$, write
$[\sK]^{\mathrm{tr}}:=[\sK^1]+[\sK^2]$. Since $P_\theta$ is an
orthogonal projection,
\begin{equation}\label{eq:scale-bracket-domination}
 [\sN]_t^{\mathrm{tr}}
 =\sum_{a=1}^2\int_0^t\int_0^\pi
 |P_\theta\Phi_{\sF}^a|^2\,\frac{d\theta}{\pi}\,ds
 \le [\widetilde{\sM}]_t^{\mathrm{tr}}.
\end{equation}
By \eqref{eq:scale-bracket-domination} and the vector-valued
Burkholder--Davis--Gundy inequalities,
\begin{equation*}
 \|\sN_1\|_{L^r}
 \le C_r\|\big([\widetilde{\sM}]_1^{\mathrm{tr}}\big)^{1/2}\|_{L^r}.
\end{equation*}
The reverse Burkholder--Davis--Gundy inequality and Doob's maximal inequality
then give
\begin{equation*}
 \|\big([\widetilde{\sM}]_1^{\mathrm{tr}}\big)^{1/2}\|_{L^r}
 \le C_r\|\sup_{t\le1}|\widetilde{\sM}_t|\|_{L^r}
 \le C_r\|\sF-E[\sF]\|_{L^r}
 \le C_r\|\sF\|_{L^r}.
\end{equation*}
Together with $|E[\sF]|\le\|\sF\|_{L^r}$, this proves
\begin{equation*}
 \|\Pscale\sF\|_{L^r}\le C_r\|\sF\|_{L^r}.
\end{equation*}
Replacing $P_\theta$ by $Q_\theta$ gives the same estimate for
$\Qscale$.  Both operators therefore extend uniquely to $L^r$, and the projection and complementarity identities pass to the extensions by density.

For square-integrable variables, \eqref{eq:scale-duality} follows from It\^o's isometry and the symmetry of $P_\theta$.  Approximation by $L_1^2\cap L_1^q$ and $L_1^2\cap L_1^{p}$, together with the bounds just proved, extends the identity to the stated exponents.  Finally, the annihilator of the range of a bounded projection is the kernel of its Banach adjoint.  Equation \eqref{eq:scale-duality} identifies that adjoint with $\Pscale$ on $L_1^{p}$, and complementarity gives \eqref{eq:annihilator-scale}.
\end{proof}

%-------------------------------------------
\subsection{Counting Lattice Points}

\begin{proof}[Proof of \cref{lem:log-polar-equidistribution}]
For an angular interval $I\subset[0,\pi]$, let
\begin{equation*}
    N_I(t):=\#\set{n\in\Nsc^+:0<|n|\le t,
    \ \theta_n\in I}.
\end{equation*}
% and let
% \begin{equation*}
%     S_I(t):=\set{r e_\theta:0<r\le t,\ \theta\in I}
% \end{equation*}
% be the corresponding planar sector. 

The planar Lipschitz principle \cite[Theorem~2.4]{widmer2012}, applied to the corresponding sector,
gives 
\begin{equation}\label{eq:sector-count}
    N_I(t)=\frac{|I|}{2}t^2+O_I(t+1).
\end{equation}

Interpreting the sum as a Stieltjes integral and using integration by parts yields, for fixed $0 \le a  < b \le 1$ and $R_\Lambda=\Lambda/(2\pi)$,
\begin{equation}\label{eq:sector-sum}
    \sum_{\substack{
        R_\Lambda^a<|n|\le R_\Lambda^b\\
        \theta_n\in I}}
        \frac1{|n|^2}
    =
    |I|(b-a)\log R_\Lambda+O_I(1).
\end{equation}

In particular 
\begin{equation}\label{eq:full-logarithmic-sum}
    \sum_{n\in \Nsc_\Lambda^+} \frac1{|n|^2}
    =
    \pi\log R_\Lambda+O(1).
\end{equation}

Endpoint conventions affect these sums only by $O_I(1)$. Hence,
\begin{equation*}
    \mu_\Lambda((a, b] \times I)
    \longrightarrow
    \frac{|I|}{\pi}(b-a),
\end{equation*}

which provides
\begin{equation*}
\mu_\Lambda  \rightharpoonup ds\,\frac{d\theta}{\pi}.
\end{equation*}

Every fixed tensor power converges accordingly. If $U_1, \ldots, U_N$ are independent uniform variables on $[0, 1]$, then Portmanteau's theorem and the union bound give
\begin{align*}
    \limsup_{\Lambda\to\infty}
    \mu_\Lambda^{\otimes N}
    \pa{D_{N,\delta}\times[0,\pi]^N}
    &\le
      P\pa{U_{(N)}-U_{(N-1)}\le\delta}\\
    &\le
      \sum_{i<j}P(|U_i-U_j|\le\delta)
      \le N(N-1)\delta.
\end{align*}

\end{proof}
\subsection*{Acknowledgements}  This work was supported by a public grant from the Fondation Mathématique Jacques Hadamard.

%%%%%%%%%%%%%%%%%%%%%%%%%%%%%%%%%%%%%%%%%%%%%%%%%%%%%%%%%%%%%%%%%%

\bibliography{biblio.bib}{}
\bibliographystyle{plain}

\end{document}